\tikzset{Rightarrow/.style={double equal sign distance,>={Implies},->},
triple/.style={-,preaction={draw,Rightarrow}},
quadruple/.style={preaction={draw,Rightarrow,shorten >=0pt},shorten >=1pt,-,double,double
distance=0.2pt}}
\definecolor{darkblue}{rgb}{0,0,0.3}
\newtheorem{thm}{Theorem}[section]
\newtheorem{cor}[thm]{Corollary}
\newtheorem{lemma}[thm]{Lemma}
\newtheorem{prop}[thm]{Proposition}
\theoremstyle{definition}
\newtheorem{define}[thm]{Definition}
\newtheorem{notate}[thm]{Notation}
\theoremstyle{remark}
\newtheorem{rem}[thm]{Remark}
\newtheorem{example}[thm]{Example}
\newtheorem{warning}[thm]{Warning}
\newtheorem{parag}[thm]{}
\newif\ifsection
\preto\section{\sectiontrue}
\preto\subsection{\sectionfalse}
\xapptocmd\@sect{%
  \ifsection
    \numberwithin{thm}{section}
  \else
    \numberwithin{thm}{subsection}
  \fi
  \setcounter{thm}{0}\relax}
  {}{}
\newenvironment{nscenter}%
{\parskip=0pt\par\nopagebreak\centering}
{\par\noindent\ignorespacesafterend}        % center tikzpictures
\newcommand\nbd\nobreakdash
\newcommand{\ndef}{\emph}
\newcommand{\op}{\text{op}}
\newcommand{\Set}{\mathcal{S}\mspace{-2.mu}\text{et}}
\newcommand{\Cat}{{\mathcal{C}\mspace{-2.mu}\mathit{at}}}
\newcommand{\nCat}[1]{{#1}\hbox{\protect\nbd-}\kern1pt\Cat}	    % n-categories
\newcommand{\s}{\mathcal{S}\mspace{-2.mu}\text{et}_{\Delta}}
\newcommand\pdftwo{\texorpdfstring{$2$}{2}}
\DeclareMathOperator{\Id}{Id}
\DeclareMathOperator{\Ob}{Ob}
\DeclareMathOperator{\co}{co}
\def\alp{{\alpha}}
\def\bet{{\beta}}
\def\gam{{\gamma}}
\def\sig{{\sigma}}
\def\ovl{\overline}
\newcommand{\tr}[2]{\mathchoice
	{#1\raise -1.8pt\vbox{\hbox{$\kern -.8pt/\mathsmaller{#2} $}}}
	{#1\raise -1.8pt\vbox{\hbox{$\kern -.8pt/#2$}}\kern .8pt}
	{#1\raise -1.8pt\vbox{\hbox{$\scriptstyle\kern -.8pt /#2$}}}
	{#1\raise -1.8pt\vbox{\hbox{$\scriptscriptstyle\kern -.8pt /#2$}}}}
\newcommand{\trbis}[2]{\mathchoice
	{#1\raise -1.8pt\vbox{\hbox{$\kern -.8pt\mathsmaller{/#2} $}}}
	{#1\raise -1.8pt\vbox{\hbox{$\kern -.8pt\mathsmaller{/#2}$}}\kern .8pt}
	{#1\raise -1.8pt\vbox{\hbox{$\scriptstyle\kern -.8pt /#2$}}}
	{#1\raise -1.8pt\vbox{\hbox{$\scriptscriptstyle\kern -.8pt /#2$}}}}
\newcommand{\overslice}[2]{\mathchoice
	{#1\raise -1.8pt\vbox{\hbox{$\kern -.8pt\mathsmaller{#2/} $}}}
	{#1\raise -1.8pt\vbox{\hbox{$\kern -.8pt\mathsmaller{#2/}$}}\kern .8pt}
	{#1\raise -1.8pt\vbox{\hbox{$\scriptstyle\kern -.8pt #2/$}}}
	{#1\raise -1.8pt\vbox{\hbox{$\scriptscriptstyle\kern -.8pt #2/$}}}}
\def\labelstylecode@triangle#1{%
	\pgfkeys@split@path%
	\edef\label@key{/triangle/label/\pgfkeyscurrentname}%
	\edef\style@key{\pgfkeyscurrentkey/.@val}%
	\def\temp@a{#1}%
	\def\temp@b{\pgfkeysnovalue}%
	\ifx\temp@a\temp@b
	\pgfkeysgetvalue{\label@key}\temp@a
	\ifx\temp@a\temp@b\else
	\pgfkeysalso{commutative diagrams/.cd, \style@key}%
	\fi
	\else
	\pgfkeys{\style@key/.code = \pgfkeysalso{#1}}%
	\fi}
\def\arrowstylecode@triangle#1{%
	\edef\style@key{\pgfkeyscurrentkey/.@val}%
	\def\temp@a{#1}%
	\def\temp@b{\pgfkeysnovalue}%
	\ifx\temp@a\temp@b
	\pgfkeysalso{commutative diagrams/.cd, \style@key}%
	\else
	\pgfkeys{\style@key/.code = \pgfkeysalso{#1}}%
	\fi}
\def\tr@abc{%
	\draw [/triangle/arrowstyle/012] (90:0.20) --
	node [/triangle/labelstyle/012] {
		\pgfkeysvalueof{/triangle/label/012}} (270:0.10);
}
\def\tr@#1#2{
	\begin{scope}[shift=#2, commutative diagrams/every diagram]
		
		\node (n{#1}0) at (150:1) {
			\pgfkeysvalueof{/triangle/label/0}};
		\node (n{#1}1) at (270:0.6) {
			\pgfkeysvalueof{/triangle/label/1}};
		\node (n{#1}2) at (30:1) {
			\pgfkeysvalueof{/triangle/label/2}};			
		
		\node (s#1) at (0,0) [circle, inner sep = 0pt,
		fit = (n{#1}0.center)(n{#1}1.center)(n{#1}2.center)] {};
		
		\begin{scope}[commutative diagrams/.cd, every arrow, every label]
			\ifcase #1
			\def\list{0/1, 1/2, 0/2}\or
			\def\list{0/1, 1/2, 0/2}\else
			\def\list{}\fi
			
			\foreach \s / \e in \list {
				\draw [/triangle/arrowstyle/\s\e] (n{#1}\s) --
				node [/triangle/labelstyle/\s\e] {
					\pgfkeysvalueof{/triangle/label/\s\e}} (n{#1}\e);
			}
			
			\ifcase #1
			\tr@abc\or
			\tr@abc
			\else\fi
			
		\end{scope}
	\end{scope}
}
\def\triangle#1{
	\pgfkeys{#1}
	\tr@{0}{(0:0)}
}
\def\labelstylecode@square#1{%
	\pgfkeys@split@path%
	\edef\label@key{/square/label/\pgfkeyscurrentname}%
	\edef\style@key{\pgfkeyscurrentkey/.@val}%
	\def\temp@a{#1}%
	\def\temp@b{\pgfkeysnovalue}%
	\ifx\temp@a\temp@b
	\pgfkeysgetvalue{\label@key}\temp@a
	\ifx\temp@a\temp@b\else
	\pgfkeysalso{commutative diagrams/.cd, \style@key}%
	\fi
	\else
	\pgfkeys{\style@key/.code = \pgfkeysalso{#1}}%
	\fi}
\def\arrowstylecode@square#1{%
	\edef\style@key{\pgfkeyscurrentkey/.@val}%
	\def\temp@a{#1}%
	\def\temp@b{\pgfkeysnovalue}%
	\ifx\temp@a\temp@b
	\pgfkeysalso{commutative diagrams/.cd, \style@key}%
	\else
	\pgfkeys{\style@key/.code = \pgfkeysalso{#1}}%
	\fi}
\def\sq@abc{%
	\draw [/square/arrowstyle/012] (235:0.25) --
	node [/square/labelstyle/012] {
		\pgfkeysvalueof{/square/label/012}} (235:0.6);
}
\def\sq@bcd{%
	\draw [/square/arrowstyle/123] (-54:0.25) --
	node [/square/labelstyle/123] {
		\pgfkeysvalueof{/square/label/123}} (-54:0.6);
}
\def\sq@acd{%
	\draw [/square/arrowstyle/023] (55:0.55) --
	node [/square/labelstyle/023] {
		\pgfkeysvalueof{/square/label/023}} (15:0.45);
}
\def\sq@abd{%
	\draw [/square/arrowstyle/013] (125:0.55) --
	node [/square/labelstyle/013] {
		\pgfkeysvalueof{/square/label/013}} (165:0.45);
}
\def\sq@#1#2{
	\begin{scope}[shift=#2, commutative diagrams/every diagram]
		
		\foreach \i in {0,1,2,3} {
			\tikzmath{\a = 135 + (90 * \i);}
			\node (n{#1}\i) at (\a:1) {
				\pgfkeysvalueof{/square/label/\i}};
		}
		
		\node (s#1) at (0,0) [circle, inner sep = 0pt,
		fit = (n{#1}0.center)(n{#1}1.center)(n{#1}2.center)
		(n{#1}3.center)] {};
		
		\begin{scope}[commutative diagrams/.cd, every arrow, every label]
			\ifcase #1
			\def\list{0/1, 1/2, 2/3, 0/2, 0/3}\or
			\def\list{0/1, 1/2, 2/3, 1/3, 0/3}\else
			\def\list{}\fi
			
			\foreach \s / \e in \list {
				\draw [/square/arrowstyle/\s\e] (n{#1}\s) --
				node [/square/labelstyle/\s\e] {
					\pgfkeysvalueof{/square/label/\s\e}} (n{#1}\e);
			}
			
			\ifcase #1
			\sq@abc\sq@acd\or
			\sq@abd\sq@bcd
			\else\fi
			
		\end{scope}
	\end{scope}
}
\def\square#1{
	\pgfkeys{#1}
	\sq@{0}{(180:1.5)}\sq@{1}{(0:1.5)}
	
	\begin{scope}[commutative diagrams/.cd, every arrow, every label]
		\draw[->] [shorten >=10pt, shorten <=10pt, /square/arrowstyle/0123] (s0) --
		node [/square/labelstyle/0123] {%
			\pgfkeysvalueof{/square/label/0123}} (s1);
		
	\end{scope}
}
\def\labelstylecode@pent#1{%
	\pgfkeys@split@path%
	\edef\label@key{/pentagon/label/\pgfkeyscurrentname}%
	\edef\style@key{\pgfkeyscurrentkey/.@val}%
	\def\temp@a{#1}%
	\def\temp@b{\pgfkeysnovalue}%
	\ifx\temp@a\temp@b
	\pgfkeysgetvalue{\label@key}\temp@a
	\ifx\temp@a\temp@b\else
	\pgfkeysalso{commutative diagrams/.cd, \style@key}%
	\fi
	\else
	\pgfkeys{\style@key/.code = \pgfkeysalso{#1}}%
	\fi}
\def\arrowstylecode@pent#1{%
	\edef\style@key{\pgfkeyscurrentkey/.@val}%
	\def\temp@a{#1}%
	\def\temp@b{\pgfkeysnovalue}%
	\ifx\temp@a\temp@b
	\pgfkeysalso{commutative diagrams/.cd, \style@key}%
	\else
	\pgfkeys{\style@key/.code = \pgfkeysalso{#1}}%
	\fi}
\def\pent@abc{%
	\draw [/pentagon/arrowstyle/012] (198:0.45) --
	node [/pentagon/labelstyle/012] {
		\pgfkeysvalueof{/pentagon/label/012}} (198:0.8);
}
\def\pent@bcd{%
	\draw [/pentagon/arrowstyle/123] (126:0.45) --
	node [/pentagon/labelstyle/123] {
		\pgfkeysvalueof{/pentagon/label/123}} (126:0.8);
}
\def\pent@cde{%
	\draw [/pentagon/arrowstyle/234] (54:0.45) --
	node [/pentagon/labelstyle/234] {
		\pgfkeysvalueof{/pentagon/label/234}} (54:0.8);
}
\def\pent@ade{%
	\draw [/pentagon/arrowstyle/034] (-40:0.6) --
	node [/pentagon/labelstyle/034] {
		\pgfkeysvalueof{/pentagon/label/034}} (-5:0.5);
}
\def\pent@abe{%014
	\draw [/pentagon/arrowstyle/014] (-70:0.55) --
	node [/pentagon/labelstyle/014] {
		\pgfkeysvalueof{/pentagon/label/014}} (-110:0.55);
}
\def\pent@acd{%
	\draw [/pentagon/arrowstyle/023] (55:0.3) --
	node [/pentagon/labelstyle/023] {
		\pgfkeysvalueof{/pentagon/label/023}} (125:0.3);
}
\def\pent@bde{%
	\draw [/pentagon/arrowstyle/134] (-5:0.4) --
	node [/pentagon/labelstyle/134] {
		\pgfkeysvalueof{/pentagon/label/134}} (35:0.5);
}
\def\pent@ace{%
	\draw [/pentagon/arrowstyle/024] (-45:0.45) --
	node [/pentagon/labelstyle/024] {
		\pgfkeysvalueof{/pentagon/label/024}} (-45:0.1);
}
\def\pent@abd{%
	\draw [/pentagon/arrowstyle/013] (-90:0.22) --
	node [/pentagon/labelstyle/013] {
		\pgfkeysvalueof{/pentagon/label/013}} (-150:0.46);
}
\def\pent@bce{%
	\draw [/pentagon/arrowstyle/124] (188:0.4) --
	node [/pentagon/labelstyle/124] {
		\pgfkeysvalueof{/pentagon/label/124}} (150:0.55);
}
\def\pent@#1#2{
	\begin{scope}[shift=#2, commutative diagrams/every diagram]
		
		\foreach \i in {0,1,2,3,4} {
			\tikzmath{\a = 270 - (72 * \i);}
			\node (n{#1}\i) at (\a:1) {
				\pgfkeysvalueof{/pentagon/label/\i}};
		}
		
		\node (p#1) at (0,0) [circle, inner sep = 0pt,
		fit = (n{#1}0.center)(n{#1}1.center)(n{#1}2.center)
		(n{#1}3.center)(n{#1}4.center)] {};
		
		\begin{scope}[commutative diagrams/.cd, every arrow, every label]
			\ifcase #1
			\def\list{0/1, 1/2, 2/3, 3/4, 0/4, 0/2, 0/3}\or
			\def\list{0/1, 1/2, 2/3, 3/4, 0/4, 1/3, 1/4}\or
			\def\list{0/1, 1/2, 2/3, 3/4, 0/4, 0/2, 2/4}\or
			\def\list{0/1, 1/2, 2/3, 3/4, 0/4, 0/3, 1/3}\or
			\def\list{0/1, 1/2, 2/3, 3/4, 0/4, 1/4, 2/4}\else
			\def\list{}\fi
			
			\foreach \s / \e in \list {
				\draw [/pentagon/arrowstyle/\s\e] (n{#1}\s) --
				node [/pentagon/labelstyle/\s\e] {
					\pgfkeysvalueof{/pentagon/label/\s\e}} (n{#1}\e);
			}
			
			\ifcase #1
			\pent@abc\pent@acd\pent@ade\or
			\pent@bcd\pent@bde\pent@abe\or
			\pent@cde\pent@ace\pent@abc\or
			\pent@ade\pent@abd\pent@bcd\or
			\pent@abe\pent@bce\pent@cde
			\else\fi
			
		\end{scope}
	\end{scope}
}
\def\pentagon#1{
  \pgfkeys{#1}
  \pent@{2}{(270:3)}\pent@{0}{(198:3)}\pent@{3}{(126:3)}
  \pent@{1}{(54:3)}\pent@{4}{(342:3)}

  \begin{scope}[commutative diagrams/.cd, every arrow, every label]
    \draw [/pentagon/arrowstyle/0123] (p0) --
    node [/pentagon/labelstyle/0123] {
      \pgfkeysvalueof{/pentagon/label/0123}} (p3);

    \draw [/pentagon/arrowstyle/0134] (p3) --
    node [/pentagon/labelstyle/0134] {
      \pgfkeysvalueof{/pentagon/label/0134}} (p1);

    \draw [/pentagon/arrowstyle/1234] (p1) --
    node [/pentagon/labelstyle/1234] {
      \pgfkeysvalueof{/pentagon/label/1234}} (p4);

    \draw [/pentagon/arrowstyle/0234] (p0) --
    node [/pentagon/labelstyle/0234] {
      \pgfkeysvalueof{/pentagon/label/0234}} (p2);

    \draw [/pentagon/arrowstyle/0124] (p2) --
    node [/pentagon/labelstyle/0124] {
      \pgfkeysvalueof{/pentagon/label/0124}} (p4);

    \draw [/pentagon/arrowstyle/01234] (270:0.75) --
    node [/pentagon/labelstyle/01234] {
      \pgfkeysvalueof{/pentagon/label/01234}} (90:0.75);
  \end{scope}
}
\DeclareRobustCommand{\SkipTocEntry}[5]{}
\newcommand{\comp}{\ast}
\newcommand{\A}{\EuScript{A}}
\newcommand{\B}{\EuScript{B}}
\newcommand{\C}{\EuScript{C}}
\newcommand{\D}{\EuScript{D}}
\newcommand{\E}{\EuScript{E}}
\newcommand{\J}{\EuScript{J}}
\newcommand{\mJ}{\bar{\J}}
\newcommand{\Bx}{\trbis{\B}{x}}
\newcommand{\Bxcart}{(\Bx)_{\text{cart}}}
\newcommand{\Cl}{\trbis{\C}{\ell}}
\newcommand{\CF}{\trbis{\C}{F}}
\newcommand{\mCF}{\C^{/F}}
\newcommand{\mCFcart}{(\mCF)_{\text{cart}}}
\newcommand{\CFmod}{\overline{\trbis{\C}{F^\triangleleft}}}
\DeclareMathOperator{\Di}{D}
\newcommand{\final}{\Di_0}
\newcommand{\car}{\mathrm{cart}}
\title{Bilimits are bifinal objects}
\author{Andrea Gagna}
\address{Institute of Mathematics, Czech Academy of Sciences\\ \v{Z}itn\'a 25 \\115 67   Praha 1\\ Czech Republic}
\email{gagna@math.cas.cz}
\urladdr{https://sites.google.com/view/andreagagna/home}
\author{Yonatan Harpaz}
\address{Institut Galilée\\ Université Paris 13\\ 99 avenue Jean-Baptiste Clément\\ 93430 Villeta-neuse\\ France}
\email{harpaz@math.univ-paris13.fr}
\urladdr{https://www.math.univ-paris13.fr/~harpaz}
\author{Edoardo Lanari}
\address{Institute of Mathematics, Czech Academy of Sciences\\ \v{Z}itn\'a 25 \\115 67   Praha 1\\ Czech Republic}
\email{edoardo.lanari.el@gmail.com}
\urladdr{https://edolana.github.io/}
\subjclass[2020]{18D30, 18D70, 18N10}
\begin{document}
%
% Abstract
%
\begin{abstract}
	We prove that a (lax) bilimit of a \(2\)-functor is characterized by the existence of a limiting contraction in the \(2\)-category of (lax) cones over the diagram. We also investigate the notion of bifinal object and prove that a (lax) bilimit is a limiting bifinal object in the category of cones. Everything is developed in the context of marked 2-categories, so that the machinery can be applied to different levels of laxity, including pseudo-limits.
\end{abstract}

\maketitle

\tableofcontents

\section*{Introduction}

The theory of limits and colimits sits at the core of category theory and
it has become an essential tool to express natural constructions of interest to many areas of mathematics.
As the formalism of category theory matured, it became clear that some important
phenomena are better understood when framed in a 2-categorical setting.
It was therefore natural to pursue a generalization of the useful concept
of (co)limits in such a context.
The notions of 2-limit and 2-colimit were first formulated independently
by Auderset in~\cite{Auderset}, where
the Eilenberg--Moore and the Kleisli category of a monad are recovered as a 2-limit and 2-colimit,
and by Borceux--Kelly~\cite{BorceuxKelly}, who introduced the notion of enriched limits and colimits
and so in particular limits and colimits enriched in the cartesian closed category of small categories.
These notions were further studied and developed by Street~\cite{StreetLimits}, Kelly~\cite{KellyElementary,KellyBasic}
and Lack~\cite{LackCompanion}, who also introduced and investigated the lax and weighted versions.\\

More recently two papers by clingman and Moser appeared in the literature, namely~\cite{ClingmanMoserLimitsDifferent} and~\cite{ClingmanMoserBiinitial},
where they investigate whether the well-known result that limits are terminal cones extends to the 2-dimensional framework.
The first one proves that the answer is negative, \ie~terminal cones are no longer enough to capture the correct universal property, no matter what flavour of slice category one uses.
In the second paper, the authors leverage on results from double-category theory on representability of \(\Cat\)-valued functors to show that being terminal still captures the notion of limit, provided one is willing to work with an alternative 2-category than that of cones, there denoted by \(\mathbf{mor}(F)\) for a given diagram \(F\).

\addtocontents{toc}{\SkipTocEntry}
\subsection*{Motivations}

The main goal of this paper is to clarify, with its main result (Theorem~\ref{thm:bilimits_are_bifinal}), that a natural characterization of lax bilimits in terms of cones is still possible. The use of a marking on the domain of the 2-functor of which we
want to study the bilimit addresses in a fundamental way all the possible
levels of laxity of the bilimit (pseudo, lax or anything in between). We remark
that this further level of generality is necessary from a technical viewpoint to coherently interpolate laxity from pseudo to lax in all the 2-categorical constructions.
For example, a similar technique is also needed in
the development of flat pseudo-functors, and therefore in the theory of 2-categorical
filteredness as well as in the theory of 2-topoi, carried out by  Descotte, Dubuc and Szyld in~\cite{DescotteDubucSzyldSigmaLimits}.

Another reason that drove us to write this paper was filling the gap in the literature as concerns final objects in 2-category theory.
Although some results on final 2-functors already appeared in~\cite{GarciaSternThmA} by Abell\'{a}n Garc\'{i}a and Stern
(and in the \((\infty,2)\)-categorical context in~\cite{GagnaHarpazLanariLaxLimits} by the authors),
we consider establishing the connections between final objects, contractions and bilimits long overdue.

Finally, further motivation comes from the realm of homotopy theory and in particular of \((\infty, 2)\)-category theory.
On the one hand, we wanted to build some low-dimensional intuition based on the \((\infty, 2)\)-categorical treatment
of (lax, weighted) (co)limits we developed in~\cite{GagnaHarpazLanariLaxLimits}. On the other hand, we are convinced that classical 2-category theory and \((\infty, 2)\)-category theory can benefit
significantly from each other techniques and results. Indeed, the results of this paper have been written with a fibrational point of view
typical of weak higher categories. From this perspective, the use of marked edges is all the more natural.
Conversely, 2-category theory encompasses many structures and results that would be useful to generalize to
\((\infty, 2)\)-categories, as they are needed in derived algebraic geometry. Consider for instance the
theory of 2-(co)filtered 2-categories and 2-Ind/2-Pro constructions.
These are worked out by Descotte and Dubuc in~\cite{DescotteDubuc2Pro} for 2-categories
and their \((\infty, 2)\)-categorical counter-part is a useful tool in geometry, see for instance~\cite[\S A.3]{PortaSalaCatHall}
of Porta and Sala.
Profiting to a greater extend of such an interactive relationship will be the subject of further investigations.

\addtocontents{toc}{\SkipTocEntry}
\subsection*{Technical summary}
In this work, we insist on keeping the category of cones as the object of interest, but we claim that the notion of ``terminality'' is not the correct one to consider (hence the no-go theorem of~\cite{ClingmanMoserLimitsDifferent}). Instead, we focus our attention on bifinal objects, and we make use of the formalism of marked \(2\)-categories and contractions to characterize (lax) bilimits as bifinal objects in the 2-category of cones. The reason why this is not needed in the 1-dimensional case is that in such context final objects coincide with terminal ones.
That is to say, an object \(c\) of a category \(C\) is terminal if and only if the inclusion functor \(\{c\} \hookrightarrow C\) is a final functor
or equivalently if the projection \(\trbis{C}{c} \to C\) has a section mapping \(c\) to the identity \(1_c\).
It is therefore natural to investigate the meaning of finality of a 2-functor \(\{c\} \hookrightarrow \C\), with \(\C\) a 2-category with a set of marked 1-cells,
and to inspect well-behaved sections of the projection 2-functor \(\trbis{\C}{c} \to \C\) for a pseudo/lax version of the slice 2-category.
By doing so, one is led to consider conditions of \emph{local terminality}, by which we mean
choices of terminal objects in the hom-categories \(\C(a, c)\), for all objects \(a\) of \(\C\).
Exploiting this point of view, we show that the appropriate 2-categorical generalizations of these characterizations of terminality
still coincide and are equivalent to a ``coherent'' notion of local terminality known as \emph{contraction}.

An archetypical example of terminal object in category theory is given by the identity arrow \(1_x\),
thought as an object of \(\trbis{C}{x}\) for \(C\) a category and \(x\) an object of \(C\).
Armed with the understanding of 2-categorical terminal conditions as contractions or as suitable final 2-functor from a singleton,
one can examine the (lax) 2-slice \(\trbis{\C}{x}\) for a 2-category \(\C\) and an object \(x\) of \(\C\).
This is the 2-categorical equivalent of the slice, but where the triangles of \(\C\) with tip \(x\)
defining the 1-cells \(\trbis{\C}{x}\) are filled with a 2-cell of \(\C\).
A first interesting observation concerns the local terminal objects: for an arrow \(f \colon a \to x\) of \(\C\),
the terminal objects of the hom-category \(\trbis{\C}{x}(f, 1_x)\) are precisely the cartesian edges of the
projections \(\trbis{\C}{x} \to \C\). These cartesian edges are simply the triangles \(\C\) with edges
\(f\) and \(1_x\)
such that the 2-cell filling it is invertible. Said otherwise, such a cartesian edge is the data of
a 1-cell \(g \colon a \to x\) of \(\C\) together with an invertible 2-cell between \(f\) and \(g\).
But there are potentially other cartesian edges of \(\trbis{\C}{x}\)
from \(f\colon a \to x\) to another arrow \(g \colon b \to x\) of \(\C\),
which are just 1-cells \(h \colon a \to b\) together with an invertible 2-cell
between \(f\) and \(hg\). Adopting a marking of these cartesian edges,
which acts as a book-keeping device, we show that \(1_x\) satisfies a finer finality property with respect to
all the cartesian edges. In particular, we find that \(1_x\) is \emph{biterminal} in the sub-2-category
where the objects are the same as in \(\trbis{\C}{x}\) and the hom-categories are the full sub-categories
spanned by the cartesian edges. So a more classical terminality property can be obtained,
if we are willing to restrict to these triangles filled with invertible 2-cells.
But this biterminal property \emph{does not} characterise the stronger local terminality in the
full (lax) 2-slice.

Once we turn to (lax) bilimits, keeping track of all the cartesian edges proves to be the right approach.
Given 2-functor \(F \colon \J \to \C\) and an object \(\ell\) of \(\C\),
a (lax) 2-cone over \(F\) with tip \(\ell\) is a (lax) 2-natural transformation from the constant 2-functor on~\(\ell\) to \(F\). This is the 2-categorical equivalent of a cone over \(F\), where the triangles
constituting the 2-cone are filled with 2-cells, which can be invertible or not according
to the laxity prescribed. The (lax) 2-cones over \(F\) can be canonically organised to form
a 2-category of (lax) 2-cones \(\trbis{C}{F}\). Again this is similar to the classical
category of cones over a functor, but now every triangle is filled with a 2-cell of \(\C\).
A pair \((\ell, \lambda)\) is said to be a (lax) bilimit of the functor \(F\) if the representable
hom-category \(\C(x, \ell)\) is equivalent to the category of (lax) 2-cones over \(F\) with tip \(x\);
that is, the pair \((\ell, \lambda)\) 2-represents the (lax) 2-cones over \(F\).
The main result of the paper (Theorem~\ref{thm:bilimits_are_bifinal}, together with Proposition~\ref{prop: limiting iff final}) states that the pair \((\ell, \lambda)\)
is a (lax) bilimit of \(F\) if and only if the pair \((\ell, \lambda)\), thought as an object of the
2-category \(\trbis{\C}{F}\) of (lax) 2-cones over \(F\), is final with respect to the class of cartesian
edges of the projection \(\trbis{\C}{F} \to \C\). 
The local terminality that we get is of the following kind: If \((\ell, \lambda)\) is a (lax) bilimit
of the 2-functor \(F \colon \J \to \C\) and \((x, \alpha)\) is another (lax) 2-cone over \(F\),
then every morphism of (lax) 2-cones \((f, \sigma)\) from \((x, \alpha)\) to \((\ell, \lambda)\),
\ie any object of the hom-category \(\trbis{\C}{F}\bigl((x, \alpha), (\ell, \lambda)\bigr)\),
with \(\sigma\) an invertible ``2-cell'' is terminal; here \(f \colon x \to \ell\) is a morphism of \(\C\),
\(\alpha\) is a (lax) cone of $x$ over \(F\) and \(\sigma\)
is a modification from \(\lambda\) precomposed (component-wise) by \(f\) to \(\alpha\).
Said otherwise, any such morphism of (lax) cones over \(F\), morally a triangle over \(F\),
filled by an invertible 2-cell is terminal in the appropriate hom-category.
These morphisms of (lax) 2-cones are precisely the cartesian edges of the projection
\(\trbis{\C}{F} \to \C\) with target the (lax) 2-cone \((\ell, \lambda)\).
In particular, if we restrict to the hom-categories spanned by the cartesian edges we get that \((\ell, \lambda)\)
is biterminal.

\medskip

\addtocontents{toc}{\SkipTocEntry}
\subsection*{Structure of the article}
The paper is structured as follows. After a preliminary section where we fix the notation,  we briefly recall the necessary background on 2-categories and relevant constructions, namely joins, slices and the Grothendieck construction for fibrations of 2-categories.

We then move on to Section~\ref{sec:bilimits}, where we introduce lax marked bilimits and contractions (following Descotte, Dubuc and Szlyd~\cite{DescotteDubucSzyldSigmaLimits}), and we prove in Proposition \ref{prop:E-final-iff-E-contraction} that final objects can be characterized in several ways, one of which involves contractions.

Next, Section~\ref{sec:car-edges} is an investigation on representable fibrations and the properties of the corresponding representing objects. By looking at the cartesian morphisms of the projection \(p \colon \mCF \to \C\) we are able to highlight the difference from the simpler case of ordinary 1-categories, drawing a parallelism in Proposition \ref{prop:biterminal_Bx}.
Moreover, we carve out a full subcategory of the category of maps over a given cone on a diagram, that we prove in Proposition~\ref{prop:mod_slice_trivial_fibration_over_point} to be equivalent to the slice over the tip of the given cone. Note that this is a low-tech version of Corollary 5.1.6 from~\cite{GagnaHarpazLanariLaxLimits}.

Finally, in the last section we blend all together, culminating in the main result recorded as Theorem \ref{thm:bilimits_are_bifinal}, which characterizes (lax, marked) bilimits as \emph{limiting bifinal} cones. Furthermore, we prove that such bilimits are also terminal in the appropriate subcategory of cones obtained by restricting to cartesian morphisms between them.

\addtocontents{toc}{\SkipTocEntry}
\section*{Acknowledgements}
The first and third author gratefully acknowledge the support of Praemium Academiae of M.~Markl and RVO:67985840.
The authors also thank the anonymous reviewer for the precise and detailed recommendations.

\section{Preliminaries}

\subsection{Basic notions and notations}

\begin{parag}
	Recall that a \(2\)-category \(\C\) is composed by a class \(\Ob(\C)\) of objects or
	0-cells, a class of \(1\)-cells, a class of \(2\)-cells, identities for objects and 1-cells,
	source and target functions	for 1-cells and 2-cells, together with a ``horizontal''
	composition \(\comp_0\) for 1-cells and 2-cells and a ``vertical'' composition \(\comp_1\)
	for 2-cells.
	The identity of an object or 1-cell \(x\) will be denoted by \(1_x\).
	We will denote the composition of \(1\)-cells just by juxtaposition.
	Given a 1-cell \(f \colon x \to y\) and a 2-cell \(\alpha \colon g\to g' \colon y \to z\),
	we will denote by \(\alpha \comp_0 f\) the whiskering, by which we mean the composition \(\alpha \comp_0 1_f\).
	For any pair of objects \(x, y\), we will denote by \(\C(x, y)\)
	the category whose objects are 1-cells of \(\C\) from \(x\) to \(y\)
	and the morphisms are the 2-cells between these.

	The category of small 2-categories and strict 2-functors will be denoted by \(\nCat{2}\),
	while its full subcategory spanned by 1-categories will be denoted by \(\Cat\).
	The 2\nbd-cat\-e\-gory of small 2-categories, pseudo-functors and pseudo-natural transformations will
	be denoted by~\(\nCat{2}_{\text{ps}}\). By pseudo-functor we will always mean normal pseudo-functors, \ie
	unit preserving.
\end{parag}

\begin{example}
	The canonical example of a 2-category is given by \(\Cat\), where natural transformations play the role of
2-cells.
\end{example}

\begin{parag}
	We will denote by
	\[
		\Di_0 = \bullet
		\quad , \quad
		\Di_1 = 0 \to 1
		\quad , \quad
		\Di_2 =
			\begin{tikzcd}
				0 \ar[r, bend left, ""{swap, name=s}] \ar[r, bend right, ""{name=t}] & 1
				\ar[Rightarrow, from=s, to=t]
			\end{tikzcd}
	\]
	the 2-categories corepresenting, respectively, the free-living object, 1-cell and 2-cell in \(\nCat{2}\).
\end{parag}

\begin{parag}
	Given a 2-category \(\C\), one can consider the 2-category
	\(\C^\op\) obtained from \(\C\) by reversing all the 1-cells,
	\ie formally swapping source and target. By reversing the 2-cells of \(\C\), we get another 
	2-category which we denote by \(\C_{\co}\). Thus, for every pair of objects \(x, y\), we have \(\C(x, y)^\op = \C_{\co}(x,y)\).
	Combining these two dualities, we get a 2-category \(\C^\op_{\co}\), 
	where both 1-cells and 2-cells of \(\C\) have been reversed.
\end{parag}

\begin{parag}
\label{par: bieq}
	A 1-cell \(f \colon x \to y\) of a 2-category \(\C\) is an \ndef{equivalence}
	if we can find a 1-cell \(g \colon y \to x\) together with invertible
	2-cells \(gf \to 1_x\) and \(fg \to 1_y\).
	A \ndef{biequivalence} of 2-categories is a 2-functor \(F \colon \C \to \D\)
	such that
	\[
		\C(x, y) \to \D(Fx, Fy)
	\]
	is an equivalence of categories for every pair of objects \(x, y\) of \(\C\),
	and for every object \(z\) of \(\D\) we can find an object \(c\) of \(\C\)
	equipped with an equivalence \(Fc \to z\).
	Biequivalences of 2-categories are the weak equivalences
	for a model category structure of 2-categories established by Lack~\cite{LackModel2Cat, LackModelBicat}.
	In particular, we shall use the fact that the class of biequivalences
	enjoys the 2-out-of-3 property.
	Furthermore, in order to check whether a 2-functor \(F \colon \C \to \D\)
	is a biequivalence it is sufficient to check that:
	\begin{itemize}
		\item the 2-functor \(F\) is surjective on objects,
		\item the 2-functor \(F\) is \emph{full on 1-cells}, \ie lifts 1-cells with prescribed lifts of the boundary \(0\)-cells (\ie objects),
		\item the 2-functor \(F\) is \emph{fully faithful} on 2-cells, \ie it uniquely lifts 2-cells with prescribed lifts of the boundary \(1\)-cells. 
	\end{itemize}
	In fact, such a 2-functor is a trivial fibration in the above-mentioned model category structure. In detail, the first condition corresponds to a lifting property of the form:
	\[\begin{tikzcd}
		\emptyset \ar[d] \ar[r] & \C \ar[d,"F"]\\
		\Di_0 \ar[r] & \D .
	\end{tikzcd}\]
	The second one is encoded by the following lifting property:
	\[\begin{tikzcd}
		\Di_0 \coprod \Di_0 \ar[d,"(s{,}t)"{swap}] \ar[r] & \C \ar[d,"F"]\\
		\Di_1 \ar[r] & \D .
	\end{tikzcd}\]
	Here we used the globular notation, so that \(s, t \colon \Di_0 \to \Di_1\) are the functors mapping the unique object of \(\Di_0\) to \(0\), resp.~\(1\).
	Finally, the third condition merges together two lifting properties, depicted below:
	\[ \begin{tikzcd}
		{\displaystyle \Di_1 \coprod_{D_0 \coprod D_0} \Di_1 }\ar[d,"(s{,}t)"{swap}] \ar[r] & \C \ar[d,"F"] & & & {\displaystyle \Di_2 \coprod_{D_1 \coprod D_1} \Di_2} \ar[d,"(\mathrm{Id}{,}\mathrm{Id})"{swap}] \ar[r] & \C \ar[d,"F"]\\
		\Di_2 \ar[r] & \D & & & \Di_2 \ar[r] & \D .
	\end{tikzcd}\]

	An analogous characterisation holds if \(F \colon \C \to \D\) is instead a pseudo-functor (see~\cite{LackModelBicat}).
\end{parag}

\begin{define}
	A \ndef{marked \(2\)-category} is a pair \((\C, E)\) where
	\(\C\) is a \(2\)-category and \(E\) is a class of \(1\)-cells in 
	\(\C\) containing the identities.
	A \ndef{marked 2-functor} between marked 2-categories
	\(F \colon (\C, E_\C) \to (\D, E_\D)\) is a 2-functor
	\(F \colon \C \to \D\) that maps every marked 1-cell in \(E_\C\)
	to a marked 1-cell in \(E_\D\), \ie \(F(E_\C) \subseteq E_\D\).
	We denote by \(\nCat{2}^+\) the category of marked 2-categories and marked 2-functors.

	Whenever we will consider a marked 2-category, we will only mention the non-trivial
	marked \(1\)-cells, \ie those which are not identities.
\end{define}

\begin{parag}
	We say that an object \(c\) of a 2-category \(\C\) is \ndef{quasi-terminal},
	or that \(\C\) \ndef{admits~\(c\) as a quasi-terminal object}, if for
	any object \(x\) of \(\C\) the category \(\C(x, c)\) has a terminal object.
	The notion of quasi-terminal object was introduced in~\cite{JayLocalAdjunction} by Jay
	under the name of locally terminal object. 
	The term \emph{quasi-terminal} appears also in~\cite{AraMaltsiCondE}, where Ara and Maltsiniotis extend this
	to strict \(n\)-categories, for \(1 \leq n \leq \infty\).
\end{parag}

\begin{parag}
	Let \(\bar{\J} = (\J, E)\) be a marked 2-category, \(\C\) be
	a 2-category and \(F, G \colon \J \to \C\) be 2-functors.
	A \ndef{lax \(E\)-natural transformation} \(\alpha \colon F \to G\) consists of the following data:
	\begin{itemize}
		\item a 1-cell \(\alpha_i \colon Fi \to Gi\) of \(\C\) for any \(i\) in \(\Ob(\J)\),

		\item a 2-cell \(\alpha_k \colon G(k) \comp_0 \alpha_i \to \alpha_j \comp_0 F(k)\) of \(\C\),
			that we depict as follows
			\[
				\begin{tikzcd}
					Fi \ar[r, "F(k)"] \ar[d, "\alpha_i"']	& Fj \ar[d, "\alpha_j"]	\\
					Gi \ar[r, "G(k)"']					& Gj,
					\ar[Rightarrow, from=2-1, to=1-2, shorten <=3mm, shorten >=2.5mm, "\alpha_k"']
				\end{tikzcd}
			\] 
			for every 1-cell \(k \colon i \to j\) of \(\J\),
 	\end{itemize}
 	satisfying the following conditions:
 	\begin{description}
 		\item[identity] we have \(\alpha_{1_i} = 1_{\alpha_i}\) for all objects \(i\) of \(\J\);
 		\item[marking] the 2-cell \(\alpha_k\) is invertible for all \(k\) in \(E\);

 		\item[compositions] we have \(\alpha_{lk} = (\alpha_l \comp_0 Fk) \comp_1 (Gl \comp_0 \alpha_k)\),
 			for all \(k \colon i \to i'\) and \(l \colon i' \to i''\) 1-cells in \(\J\),

 		\item[compatibility] we have \(\alpha_l \comp_1 (G\delta \comp_0 \alpha_i) =
 			(\alpha_j \comp_0 F\delta) \comp_1 \alpha_k\), for every 2-cell
 			\(\delta \colon k \to l \colon i \to j\) of \(\J\).
 	\end{description}

 	We will denote by \([\J, \C]_E\) the \pdftwo-category of \pdftwo-functors
 	from \(\J\) to \(\C\), lax \(E\)-natural transformations and modifications.
\end{parag}

\begin{rem}
	In the notation of the previous paragraph,
	if \(E\) consists only of the identity 1-cells of \(\J\) (resp.~of all the 1-cells of \(\J\)), we recover the notion of \emph{lax} natural transformation (resp.~\emph{pseudo} natural transformation).
\end{rem}

\begin{rem}
	The notion of $E$-natural transformation was introduced in~\cite[Definition~2.1.1]{DescotteDubucSzyldSigmaLimits}
	by Descotte, Dubuc and Szyld, where they are called $\sigma$-natural transformation and the marking is denoted by $\Sigma$.
\end{rem}

\subsection{Join and slices of \pdftwo-categories}
\label{sec:slices}

\begin{parag}
	The notions of join and slice were generalized by Ara and Maltsiniotis~\cite{AraMaltsiniotisJoin}
	to the category of strict \(\infty\)-categories (also known as \(\omega\)-categories).
	By truncating, their notion also provides a definition for strict \(n\)-categories
	(see~\cite[Ch.~8]{AraMaltsiniotisJoin}).
	In fact, there are at least two sensible notions of join that one can give for
	strict higher categories, due to the choice of the variance of the higher cells:
	the lax and the oplax join
	(cf.~\cite[Remark~6.37]{AraMaltsiniotisJoin}). The two joins collapse to the
	classical \(1\)-categorical join once we truncate with respect to the higher cells.
	This choice of variance is important
	in relation to the kind of slice one intends to consider. We shall mainly consider
	slices \emph{over} a \(2\)-functor, and in this case the lax variance enjoys
	better formal properties. We start by recalling the general formalism that allows us
	to get the generalised (op)lax slices.
\end{parag}

\begin{parag}
	Given two 2-categories \(\A\) and \(\B\), their lax join \(\A \ast \B\) is the following 2-category:
	\begin{itemize}
		\item the objects are those of \(A \coprod B\), that we denote by $a \star \varnothing$ and \(\varnothing \star b\), for \(a \in \Ob(\A)\) and \(b \in \Ob(\B)\);
		
		\item to the 1-cells of the coproduct 2-category \(A \coprod B\), that we denote by $f \star \varnothing$ and \(\varnothing \star g\),
		for any \(f \colon a \to a'\) in \(\A\) and any \(g \colon b \to b'\) in \(\B\),  we add 
		 a 1-cell \(a \star b \colon a \star \varnothing \to \varnothing \star b\) for every pair of objects \((a, b)\) in \(\Ob(\A)\times \Ob(\B)\),
		and then closing by composition in the obvious way;
		
		\item to the 2-cells of the coproduct 2-category \(A \coprod B\), that we denote by $\alpha \star \varnothing$ and \(\varnothing \star \beta\),
		for any \(\alpha \colon f \Rightarrow f'\) in \(\A\) and every \(\beta \colon g \Rightarrow g'\) in \(\B\),  we add 
		2-cells
		\[
			\begin{tikzcd}[column sep=small, row sep=small]
				& {\varnothing \star b} \\
				{a \star \varnothing} \\
				& {\varnothing\star  b'}
				\arrow["{\varnothing\star g}", from=1-2, to=3-2]
				\arrow["{a \star b}", from=2-1, to=1-2]
				\arrow["{a\star b'}"{swap}, ""{anchor=center,name=ab}, from=2-1, to=3-2]
				\arrow["{a \star g}"{description}, shorten <=3pt, shorten >= 1pt, Rightarrow, from=1-2, to=ab]
			\end{tikzcd}
			\quad\text{and}\quad
			\begin{tikzcd}[column sep=small, row sep=small]
				a \star \varnothing \ar[rd, "a\star b", ""{anchor=center, name=ab}]	& 	\\
																	& \varnothing \star b 	\\
				a' \star \varnothing \ar[ru, "a'\star b"{swap}]		&
				\ar[from=1-1, to=3-1, "f\star \varnothing"{swap}]
				\ar[from=3-1, to=ab, Rightarrow, shorten <= 3pt, shorten >= 1pt, "f \star b"{description}]
			\end{tikzcd}
		\]
		for any \(f\colon a \to a'\) in \(\A\) and \(g \colon b \to b'\) in \(\B\).
		We close by composition in the obvious way, but imposing the following relations.

		\item For any \(a\) in \(\Ob(\A)\) and \(b\) in \(\Ob(\B)\), the 2-cells \(a \star 1_b\) and \(1_a \star b\)
		are identities;
		
		\item for any \(\alpha \colon f \Rightarrow f'\) in \(\A\) and \(b\) in \(\Ob(\B)\), we impose
		\[
			f' \star b \ast_1 (a'\star b \ast_0 \alpha \star \varnothing ) = f \star b; 
		\]
		
		\item for any \(a\) in \(\Ob(\A)\) and \(\beta \colon g \Rightarrow g'\) in \(\B\), we impose
		\[
			a \star g' \ast_1 (\varnothing \star \beta \ast_0 a \star b) = a \star g;
		\]

		\item for any \(f \colon a \to a'\) in \(\A\) and any \(g \colon b \to b'\) in \(\B\), we impose
		\begin{nscenter}
			\begin{tikzpicture}[scale=1.5]
				\square{
					/square/label/.cd,
					0 = {$a \star \varnothing$}, 1 = {$a' \star \varnothing$}, 2 = {$\varnothing \star b$}, 3 = {$\varnothing \star b'$},
					01 = {$f \star \varnothing$}, 12 = {$a\star b$}, 23 = {$\varnothing \star g$}, 03 = {$a \star b'$},
					02= {$a \star b$}, 13 = {$a' \star b'$},
					012 = {$f \star b$}, 023 = {$a \star g$}, 013 = {$f \star b'$}, 123 = {$a' \star g$},
					/square/arrowstyle/.cd,
					012={Leftarrow}, 023={Leftarrow}, 013={Leftarrow}, 123={Leftarrow},
					0123 = {equal}
				}
			\end{tikzpicture}
		\end{nscenter}
	\end{itemize}
	This gives a monoidal category structure
	on \(\nCat{2}\), the category of small 2-categories, whose unit is the empty 2-category.
	This monoidal structure is \emph{not} closed, but it is locally closed in the sense that the functors
	\begin{align*}
		\nCat{2} 	&\to \overslice{\nCat{2}}{\A}	 & \nCat{2} & \to \overslice{\nCat{2}}{\B} \\
		\B 		& \mapsto (\A \ast \B, i_{\A} \colon \A \to \A \ast \B )	&
		\A 		& \mapsto (\A \ast \B, i_{\B} \colon \B \to \A \ast \B),
	\end{align*}
	both have a right adjoint, where \(\begin{tikzcd}\A \ar[r, "i_{\A}"] & \A\ast \B & \B \ar[l, "i_{\B}"']\end{tikzcd}\) are the
	canonical inclusions.
	These adjoint functors are denoted by
	\begin{align*}
		\overslice{\nCat{2}}{\A}		&\to  \nCat{2} 		& \overslice{\nCat{2}}{\B} &\to \nCat{2}\\
		(\C, u \colon \A \to \C) 	& \mapsto \overslice{\C}{u}	&
		(\C, v \colon \B \to \C)	& \mapsto \trbis{\C}{v}
	\end{align*}
	and called the (generalized) \ndef{lax slice} functors.
	The 2-categories \(\overslice{C}{u}\) and \(\trbis{C}{v}\) correspond
	to the 2-category of lax cocones under \(u\) and the 2-category of lax cones over \(v\), respectively.
	If \(\A\) and \(\B\) are both the terminal (2-)category \(\Di_0\) and \(u\) and \(v\)
	correspond to the object \(c\) of \(C\), then the generalized lax slices over and under the object \(c\)
	will simply be denoted by \(\overslice{\C}{c}\) and \(\trbis{\C}{c}\).
\end{parag}

\begin{parag}
 \label{parag:lax_slice_under_point}
	We now provide an explicit description of the \(2\)-categorical lax slice over a point.
	Let \(\C\) be a \(2\)-category and \(c\) be an object of \(\C\).
	\begin{itemize}
		\item The objects of \(\trbis{\C}{c}\) are pairs \((x, \alpha)\), with \(x\) in \(\Ob(\C)\)
	and \(\alpha \colon x \to c\) a \(1\)-cell of~\(\C\).
	They correspond to \(2\)-functors
	\(\Di_0 \ast \Di_0 \to \C\) and \(\Di_0 \ast \Di_0 \cong \Di_1\).

		\item A \(1\)-cell of \(\trbis{\C}{c}\) from \((x, \alpha)\) to \((y, \beta)\) is
			given by a \(2\)-functor \(\Di_1 \ast \Di_0 \to \C\) such that its restriction
			to \(\{0\} \ast \Di_0\) is \((x, \alpha)\) (resp.~to \(\{1\}\ast \Di_0\) is \((y, \beta)\)).
			Explicitly, this means that such a \(1\)-cell is a pair \((f, \gamma)\), where
			\(f \colon x \to y\) is a \(1\)-cell and \(\gamma \colon \beta f \to \alpha\)
			is a \(2\)-cell of \(\C\), that we depict by
			\[
				\begin{tikzcd}[column sep = tiny]
					x \ar[rr, "f"] \ar[rd, "\alpha"{swap, name=a}] 	&	& y \ar[ld, "\beta"{pos=0.4}] \\
													& c &
					\ar[Rightarrow, from=1-3, to=a, shorten <=3mm, shorten >=2mm, "\gamma"{swap}]
				\end{tikzcd}
			\]

		\item A \(2\)-cell of \(\trbis{\C}{c}\) from \((f, \gamma)\) to \((g, \delta)\) is
			given by a \(2\)-functor \(\Di_2 \ast \Di_0 \to \C\) such that the appropriate restrictions
			to \(\Di_1 \ast \Di_0\) are \((f, \gamma)\) to \((g, \delta)\).
			Explicitly, this means that such a \(2\)-cell is given by a \(2\)-cell \(\Xi\colon f \to g\) of \(\C\)
			satisfying 
			\[
				\delta \comp_1 (\beta \comp_0 \Xi) = \gamma,
			\]
			that we can depict by
			\[
				\begin{tikzcd}[column sep=small, row sep=3.2em]
					x \ar[rr, bend left, "f", ""{swap, name=s}]
						\ar[rr, bend right, ""{name=t}, "g"{above right = 0pt and 2pt}]
						\ar[rd, "\alpha"{swap, name=a}]	&	& y \ar[ld, "\beta"{pos=0.4}, ""{swap, pos=0.1, name=b}]	\\
												& c &
					\ar[Rightarrow, from=s, to=t, "\Xi"{swap}]
					\ar[Rightarrow, from=b, to=a, shorten <=3mm, shorten >=3mm, "\delta"]
				\end{tikzcd}
				=
				\begin{tikzcd}[column sep=small, row sep=3.2em]
					x \ar[rr, bend left, "f"] \ar[rd, "\alpha"{swap, name=a}]
						&	& y \ar[ld, "\beta"{pos=0.4}, ""{swap, pos=0.1, name=b}]	\\
						& c &
					\ar[Rightarrow, from=b, to=a, shorten <=3mm, shorten >=3mm, "\gamma"']
				\end{tikzcd}
			\]
	\end{itemize}
	Compositions of cells is inherited by \(\C\).

	The 2-category \(\overslice{\C}{c}\) admits a similar description,
	and it is canonically isomorphic to the 2-category \(\bigl(\trbis{\C^\op}{c}\bigr)^\op\).
\end{parag}

\begin{parag}
	Based on the description of the slice over an object of a \(2\)-category,
	we provide the description of the lax slice of a \(2\)-category \(\C\)
	over a \(2\)-functor \(F \colon \J \to \C\).
	\begin{itemize}
		\item The objects of \(\trbis{\C}{F}\) are pairs \((x, \alpha)\), where \(x\) is an object of \(\C\), and \(\alpha\) consists of a family of \(1\)-cells 
			\((\alpha_i \colon x \to Fi)_{i \in \Ob(\J)}\) of \(\C\), and a family of \(2\)-cells
			\[
				\begin{tikzcd}[column sep=tiny]
											& x \ar[ld, "\alpha_i"'] \ar[rd, "\alpha_j"{name=t}]	&	\\
						Fi\phantom{'} \ar[rr, "F(k)"']	&														& Fj
						\ar[Rightarrow, from=2-1, to=t, shorten <= 2mm, shorten >= 3mm, "\alpha_k"{swap}]
					\end{tikzcd}
			\] indexed by \(1\)-cells \(k \colon i \to j\) of \(\J\), such that for every \(2\)-cell \(\Lambda \colon k \to k'\) in \(\J\) the following relation is satisfied
			\begin{equation}
			\label{eq:coherence_point-vs-2ell}
				\alpha_k = \alpha_{k'} \comp_1 (F\Lambda \comp_0 \alpha_i).
			\end{equation}
			Furthermore, this assignment has to respect identities and be functorial.
			More precisely, \(\alpha_{1_i} = 1_{\alpha_i}\) for all objects \(i\) of \(\J\)
			and given another \(1\)-cell \(h \colon i' \to i''\) 
			of \(\J\), we have
			\[
				\alpha_{hk} = \alpha_h \comp_1 (Fh \comp_0 \alpha_k).
			\]
			They correspond to \(2\)-functors \(\Di_0 \ast \J \to \C\)
			such that the restriction to the \(2\)-functor
			\(\emptyset \ast \J \to \C\) is
			\(F\), \ie \ndef{lax cones over \(F\)}.

		\item A \(1\)-cell of \(\trbis{\C}{F}\) from \((x, \alpha)\) to \((y, \beta)\) is
			given by a \(2\)-functor \(\Di_1 \ast \J \to \C\) such that its restriction
			to \(\{0\} \ast \Di_0\) is \((x, \alpha)\) (resp.~to \(\{1\}\ast \Di_0\) is \((y, \beta)\)).
			It is a \ndef{lax morphism of lax cones over \(F\)}. 
			Explicitly, this means that such a \(1\)-cell is a pair \((f, \mu)\), where
			\(f \colon x \to y\) is a \(1\)-cell of \(\C\) and \(\mu_i \colon \beta_i f \to \alpha_i\)
			is a \(2\)-cell of \(\C\) for all \(i\) in \(\Ob(\J)\), that we depict by
			\[
				\begin{tikzcd}[column sep = tiny]
					x \ar[rr, "f"] \ar[rd, "\alpha_i"{swap, name=a}] 	&	& y \ar[ld, "\beta_i"{pos=0.4}] \\
													& Fi &
					\ar[Rightarrow, from=1-3, to=a, shorten <=3mm, shorten >=2mm, "\mu_i"{swap}]
				\end{tikzcd}
			\]
			That is, for every object \(i\) of \(\J\) the pair \((f, \mu_i)\) is a \(1\)-cell
			in \(\trbis{\C}{Fi}\), which corresponds to the restriction
			\(\Di_1 \ast \{i\} \to \C\).
			Moreover, the set of \(2\)-cells \(\mu\) must satisfy the following relations:
			\begin{equation}
			\label{eq:coherence_1cell-vs-1cell}
				\alpha_k \comp_1 (Fk \comp_0 \mu_i) = \mu_j \comp_1 (\beta_k \comp_0 f) 
			\end{equation}
			for every \(1\)-cell \(k \colon i \to j\) of \(\J\),
			that we can depict as the following commutative diagram
			\begin{nscenter}
		 		\begin{tikzpicture}
					\square{
						/square/label/.cd,
							0={$x$}, 1={$y$}, 2={$Fi$}, 3={$Fj$},
							01={$f$}, 12={$\beta_i$}, 23={$Fk$}, 02={$\alpha_i$},
							13={$\beta_j$}, 03={$\alpha_j$},
							012={${\mu}_i$}, 023={$\alpha_k$},
							013={${\mu}_j$}, 123={$\beta_k$},
						/square/arrowstyle/.cd,
							012={Leftarrow}, 023={Leftarrow},
							013={Leftarrow}, 123={Leftarrow},
							0123 = {equal}
					}
			 	\end{tikzpicture}
			\end{nscenter}
			in \(\C\) and corresponds to \(\Di_1 \ast \{k \colon i \to j\} \to \C\).

		\item A \(2\)-cell of \(\trbis{\C}{F}\) from \((f, \mu)\) to \((g, \delta)\) is
			given by a \(2\)-functor \(\Di_2 \ast \J \to \C\) whose restrictions
			to \(\Di_1 \ast \J\) are \((f, \mu)\) to \((g, \delta)\).
			Explicitly, this means that such a \(2\)-cell is given by a \(2\)-cell \(\Xi \colon f \to g\) of \(\C\)
			satisfying \(\delta_i \comp_1 (\beta_i \comp_0 \Xi) = \mu_i\) for all \(i\) in \(\Ob(\J)\),
			\ie it provides a \(2\)-cell from \((f, \mu_i)\) to \((g, \delta_i)\) in the slices of
			\(\C\) over the objects \(Fi\).
	\end{itemize}
\end{parag}

\begin{rem}
 \label{rem:lax_cones_are_lax_transformations}
	From the explicit definition, it is straightforward to check that an object in $\trbis{\C}{F}$ corresponds
	to a pair \((x, \alpha)\), where $x$ is an object of \(\C\) and \(\alpha \colon \Delta x \to F\) is a lax
	natural trasformation, where \(\Delta x\) is the constant 2-functor on \(x\)
	from \(\J\) to~\(\C\). A 1-cell of \(\trbis{\C}{F}\) corresponds to a pair \((f, \mu)\),
	where \(f \colon x \to y\) is a 1-cell of \(\C\) and \(\mu \colon \beta \cdot \Delta f \to \alpha\)
	is a modification.
\end{rem}

\begin{rem}
 \label{rem:1-cells_of_CF}
	For a \(1\)-cell \((f, \mu) \colon (x, \alpha)\to (y, \beta)\) of \(\trbis{\C}{F}\)
	we have not imposed any equations to be satisfied for a \(2\)-cell \(\Gamma\colon k \to k'\)
	of \(\J\). This is because they are implied by the relations already present.
	To be more precise, let us consider \(\Di_1 \ast \Di_2\) as a strict \(\infty\)-category,
	in fact a strict \(4\)-category, and let us describe its \(2\)-categorical truncation in detail.

	There are four ``generating'' \(3\)-cells in \(\Di_1 \ast \Di_2\) (generating here is meant
	in the sense of polygraphs or computads, see for instance \cite[1.4]{AraMaltsiniotisJoin}).
	These are sent via \((f, \mu) \colon \Di_1 \ast \Di_2 \to \C\) to the identities witnessing:
	\begin{itemize}
		\item the relation~\eqref{eq:coherence_point-vs-2ell} for \(x\) and \(\Lambda\), that we denote
		by \(x \ast \Lambda\);
		\item  the relation~\eqref{eq:coherence_point-vs-2ell} for \(y\) and \(\Lambda\), that we denote
		by \(y \ast \Lambda\);
		\item the relation~\eqref{eq:coherence_1cell-vs-1cell} for \(f\) and \(k\), that we denote by \(f \ast k\);
		\item the relation~\eqref{eq:coherence_1cell-vs-1cell} for \(f\) and \(k'\), that we denote by \(f \ast k'\).
	\end{itemize}
	These are all relations that we have by assumption on \((x, \alpha)\) and \((y, \beta)\) or
	by the requirement for \(1\)-cells of \(\J\) that \((f, \mu)\) must satisfy.
	The unique \(4\)-cell of \(\Di_1 \ast \Di_2\) has as source (resp.~target) an appropriate whiskered composition
	of the preimages of \(x \ast \Lambda\) and \(f \ast k\) (resp.~of \(f \ast k'\) and \(y \ast \Lambda\)).
	Since truncation identifies source and target \(2\)-cells of every \(3\)-cell,
	we see that the condition for (what we can denote by) \(f \ast \Lambda\) is already satisfied.

	A similar argument explains why a \(2\)-cell \(\Xi\) of \(\trbis{\C}{F}\)
	needs no further coherence constraints than those corresponding to objects of \(\J\), the ones with respect to
	\(1\)-cells and \(2\)-cells of \(\J\) being automatically satisfied.
\end{rem}

\begin{parag}\label{parag:marked-slice}
	Let \(\bar{\J} = (\J, E)\) be a marked 2-category and \(F \colon \J \to \C\)
	a 2-functor. As with the natural transformations, we can use the marking \(E\)
	in order to impose invertibility conditions on the slice \(\trbis{\C}{F}\).
	We denote by \(\C^{/F}_E\), or simply by \(\C^{/F}\) when \(E\) is clear
	from the context, the full sub-2-category of \(\trbis{\C}{F}\)
	spanned by the objects \((x, \alpha)\) for which the 2-cell \(\alpha_k \colon Fk \ast_0 \alpha_i \to \alpha_j\)
	is invertible for all \(k\colon i \to j\) in \(E\).

	If \(E\) consists of just the identity 1-cells of \(\J\), the \(2\)-categories \(\C^{/F}_E\) and \(\trbis{\C}{F}\) coincide.
	If \(E\) consists of all the 1-cells of \(\J\) then we get the \emph{pseudo slice of \(\C\)
	over \(F\)}. 
\end{parag}

\begin{parag}
 \label{parag:marked_join}
	For \(\bar{\A} = (\A, E_{\A})\) and \( \bar{\B} = (\B, E_{\B})\) two marked categories,
	it is possible to define their \ndef{marked join} \(\bar{\A} \ast \bar{\B}\).
	This is the 2-category \(\A \ast \B\) where we formally invert all
	the 2-cells of the kind
	\[
		\begin{tikzcd}[row sep = 0.7em]
																			& \varnothing \star b \ar[dd, "\varnothing \star g"] \\
			a \star \varnothing \ar[ur, "{a \star b}"] \ar[dr, "{a \star b'}"{swap, name=t}]	& 			\\
																			& b'
				\ar[Rightarrow, from=1-2, to=t, shorten <= 2.5mm, shorten >= 3.5mm, "{a \star g}"]
		\end{tikzcd}
		\quad  \text{and}\quad
		\begin{tikzcd}[row sep = 0.7em]
			a \star \varnothing \ar[rd, "{a \star b}"{name=t}] \ar[dd, "f \star \varnothing"{swap}]	&	\\
														& b \\
			a' \star \varnothing \ar[ru, "{a' \star b}"{swap}] 			&
				\ar[Rightarrow, from=3-1, to=t, shorten <= 2.5mm, shorten >= 3.5mm, "{f \star b}"]
		\end{tikzcd}
	\]
	for \(f\) an element of \(E_{\A}\) and \(g\) an element of \(E_{\B}\).
	This operation defines a functor
	\(\nCat{2}^+ \times \nCat{2}^+ \to \nCat{2}\), which agrees with the standard join
	on minimally marked 2-categories.
	The functors
	\begin{align*}
		\nCat{2}^+ 	& \to \overslice{\nCat{2}}{\A}	 & \nCat{2}^+ & \to \overslice{\nCat{2}}{\B} \\
		\bar{\B} 			& \mapsto (\bar{\A} \ast \bar{\B}, i_{\A} \colon \A \to \bar{\A} \ast \bar{\B} )	&
		\bar{\A} 			& \mapsto (\bar{\A} \ast \bar{\B}, i_{\B} \colon \B \to \bar{\A} \ast \bar{\B}),
	\end{align*}
	both have a right adjoint functor. These adjoint functors are denoted by
	\begin{align*}
		\overslice{\nCat{2}}{\A}		&\to  \nCat{2}^+ 		& \overslice{\nCat{2}}{\B} &\to \nCat{2}^+\\
		(\C, u \colon \A \to \C) 	& \mapsto \C^{u/}_E	&
		(\C, v \colon \B \to \C)	& \mapsto \C^{/v}_F
	\end{align*}
	and we will often omit the marking in the notation of the slice,
	when it is clear from the context, coherently with the notation
	of the previous paragraph.
	Notice that, by adjunction, a 1-cell \((f, \mu)\) of \(\C^{u/}_E\)
	(resp.~of \(\C^{/v}_F\)) is marked if and only if it corresponds to
	a 2-functor \(\bar{\A} \ast \Di_1^\sharp \to \C\)
	(resp.~to a 2-functor \(\Di_1^\sharp \ast \bar{\B} \to \C\)),
	where \(\Di_1^\sharp\) is the category \(\Di_1\) with its unique non-trivial 1-cell
	marked. More explicitly, this means that for every object \(a\) of \(\bar{\A}\),
	(resp.~\(b\) of \(\bar{\B}\)) the triangle
	\[
		\begin{tikzcd}[row sep = 0.7em]
																			& x \ar[dd, "f"] \\
			u(a) \ar[ur] \ar[dr, ""{swap, name=t}]	& 			\\
																			& y
				\ar[Rightarrow, from=1-2, to=t, shorten <= 2.5mm, shorten >= 3.5mm, "\mu_f", "\simeq"']
		\end{tikzcd}
		\quad \text{, resp.} \quad
		\begin{tikzcd}[row sep = 0.7em]
			x \ar[rd, ""{name=t}] \ar[dd, "f"{swap}]	&		\\
																& v(b) 	\\
			y \ar[ru, ""{swap}] 						&
				\ar[Rightarrow, from=3-1, to=t, shorten <= 3mm, shorten >= 3.5mm, "\mu_f", "\simeq"']
		\end{tikzcd},
	\]
	is filled with an invertible 2-cells \(\mu_f\).
\end{parag}

We stress the fact that we consider the output of the marked join as a bare 2-category, without marking. Even though it inherits a natural marking from the two inputs, it will play no role in what follows.

\begin{parag}
	Following the notation of the previous paragraph, let \(\bar{\A}\) be the terminal marked 2-category \(\Di_0\),
	so that a marked 2-functor \(\Di_0 \to \C\) simply corresponds to an object \(c\) of \(\C\).
	The marked 2-category \(\C^{c/}\) (resp.~\(\C^{/c}\)) has \(\overslice{\C}{c}\) (resp.~\(\trbis{\C}{c}\)) as underlying 2-category (cf.~\ref{parag:lax_slice_under_point})
	and the marked edges are the triangles filled with an invertible 2-cell.
	These marked 2-categories appear in~\cite{GarciaSternThmA} where Abell\'{a}n Garc\'{i}a and Stern denote them by \(\C_{c\swarrow}^{\dagger}\).
\end{parag}

\subsection{Bilimits and bicolimits}

In this section we recall the notions of \(E\)-bilimit of a \(2\)-functor
with source a marked 2-category \((\J, E)\).

\begin{parag}
 \label{parag:lax_cone}
	Let \(\bar{\J} = (\J, E)\) be a marked 2-category, \(\C\) a \(2\)-category and \(F \colon \J \to \C\) a \(2\)\nbd-func\-tor.
	For a given object \(x\) of \(\C\), we denote by \(\Delta x\) the constant 2-functor on \(x\)
	from \(\J\) to \(\C\).
	Given a 2-functor \(F \colon \J \to \C\), an \ndef{\(E\)-lax \(F\)-cone} \(\alpha\) with vertex \(x\),
	that we shall often denote by \((x, \alpha)\),
	is an object of \([\J, \C]_E(\Delta x, F)\). This means that for every object \(i\) of \(\J\)
	we have a \(1\)-cell \(\alpha_i \colon x \to Fi\) of \(\C\), for every \(1\)-cell \(k \colon i \to j\)
	of \(\J\) we have a \(2\)-cell
	\[
		\begin{tikzcd}
			x \ar[d, "\alpha_i"'] \ar[r, equal]	& x \ar[d, "\alpha_j"]	\\
			Fi \ar[r, "Fk"']					& Fj
			\ar[Rightarrow, from=2-1, to=1-2, shorten <=3mm, shorten >=3mm, "\alpha_k"]
		\end{tikzcd}
	\]
	invertible whenever \(k\) is in \(E\). Moreover,
	this assignment has to respect the identities and be functorial. Finally, for every \(2\)-cell \(\Gamma \colon k \to k'\)
	of \(\J\) we have the relation
	\[
		\alpha_k = \alpha_{k'} \comp_1 (F\Gamma \comp_0 \alpha_i).
	\]
	Hence, an \(E\)-lax \(F\)-cone with vertex \(x\) corresponds to an object \((x, \alpha)\)
	of the 2\nbd-cat\-e\-gory \(\mCF\),
	that is the \(E\)-lax slice of \(\C\) over \(F\).
\end{parag}

\begin{rem}\label{rem:cones}
	Notice that the assignment of a \(2\)-functor \(F' \colon \final \ast \bar{\J} \to \C\)
	corresponds to the data of the \(2\)-functor \(F\) together with
	an \(E\)-lax cone \(\lambda \colon \Delta x \to F\) over \(F\), where \(x\)
	is the image via \(F'\) of the unique object of \(\Di_0\).

	The \(2\)-category \(\final \ast \bar{\J}\) will be denoted by
	\(\mJ^\triangleleft\), and we will use \(\mJ^\triangleright\) for the dual case of cocones.
\end{rem}

\begin{parag}
	Let \(F \colon \J \to \C\) be a 2-functor, with \(\mJ = (\J, E)\) a marked 2-category, 
	and \(\alpha \colon \Delta x \to F\) an \(E\)-lax \(F\)-cone.
	For every object \(z\) of \(\C\) there is a canonical functor
	\[
		\alpha^\ast = \alpha \cdot \Delta(-) \colon \C(z, x) \to [\J, \C]_E(\Delta z, F)
	\]
	given by post-composition with \(\alpha\). More explicitly, for \(f \colon z \to x\) a \(1\)-cell of \(\C\)
	we define \(\alpha \cdot f\) to be \(\alpha_i f \colon z \to Fi\) for every \(i\) in \(\Ob(\J)\) and
	\(\alpha_k \comp_0 f\) for every \(1\)-cell \(k \colon i \to j\) of \(\J\).
	A \(2\)-cell \(\zeta \colon f \to g \colon z \to x\) defines an evident modification
	\(\alpha \cdot \zeta \colon \alpha \cdot f \to \alpha \cdot g\).
\end{parag}

\begin{define}
	For a 2-functor \(F \colon \J \to \C\), with \(\mJ = (\J, E)\) a marked 2-category,
	an \(E\)-lax \(F\)-cone \((\ell, \lambda)\)
	is said	to be an \ndef{\(E\)-bilimit cone} if the canonical functor
	\begin{equation}
	\label{eq:bilimit}
		\lambda^\ast = \lambda \cdot \Delta(-) \colon \C(x, \ell) \to [\J, \C]_E(\Delta x, F)
	\end{equation}
	is an equivalence of categories for every \(x\) in \(\Ob(\C)\).
	We also say that \((\ell, \lambda)\) is the \ndef{\(E\)-bilimit of \(F\)}.

	The dual definition gives the notion of \ndef{\(E\)-bicolimit of \(F\)}, which is just
	the \(E^\op\)-bilimit of the 2-functor \(F^\op \colon \J^\op \to \C^\op\).
\end{define}

\begin{rem}
	The notions of $E$-cones and $E$-bilimits were introduced in~\cite[Definition~2.4.3]{DescotteDubucSzyldSigmaLimits},
	and called $\sigma$-cones and $\sigma$-colimits by Descotte, Dubuc and Szyld where the marking $E$ is there denoted by $\Sigma$,
	and emerge naturally in the study of flat pseudo-functors.
\end{rem}

\begin{rem}\label{rem:weighted}
	The case of weighted \(E\)-bilimits can be recovered from that of \(E\)-bilimits,
	as proven in~\cite[Theorem~2.4.10]{DescotteDubucSzyldSigmaLimits}. In fact,
	given a 2-functor \(F \colon \J \to \C\) and a weight \(W \colon \J \to \Cat\),
	if we denote by \(p \colon \mathcal{E}l_W \to \J\) the 2-fibration associated with the 2-functor \(W\)
	(cf.~Proposition~\ref{prop:universal_fibration}), then the $E$-bilimit of \(F\)
	weighted by \(W\) is precisely the $E_W$-bilimit of \(F\circ p \colon \mathcal{E}l_W \to \C\),
	where the marking \(E_W\) is described in~\cite[Definition~2.4.8]{DescotteDubucSzyldSigmaLimits}.
\end{rem}

\subsection{Grothendieck construction for \pdftwo-categories}
\label{sec:grothendieck}

Fibrations of 2-categories were initially introduced by Hermida in his paper \cite{HermidaFib}, but his definition is not powerful enough to obtain a Grothendieck construction for such fibrations. This notion was later perfected by Buckley, who gave the correct definition in \cite{BuckleyFibred} and proved the corresponding (un)straightening theorem. In what follows we present a concise summary of the main results which are relevant for our treatment. We start by recalling the standard notion of cartesian edge and cartesian fibration for the 1-categorical setting.

\begin{define}
	Let \(p \colon E \to B\) be a functor between categories.
	An arrow \(f \colon x \to y\) of \(E\) is \(p\)-\emph{cartesian} if the square
	\[
		\begin{tikzcd}
			E(a,x) \ar[rr,"f\circ-"] \ar[d,"p_{a,x}"{swap}]& & E(a,y)\ar[d,"p_{a,y}"]\\
			B(pa,px)\ar[rr,"p(f)\circ -"]& & B(pa,py)
		\end{tikzcd}
	\]
	is a pullback square of sets for any \(a\) in \(\Ob(E)\).
	The functor \(p \colon E \to B\) is a \emph{cartesian}, or \emph{Grothendieck}, fibration if
	for any object \(e\in E\) and any arrow \(f\colon b \to p(e)\) in \(B\)
	there exists a \(p\)-cartesian arrow \(h\colon a \to  e\) in \(E\) with \(p(h)=f\).
\end{define}

Now we recall the notion of cartesian 1-cell and 2-cell for the 2-categorical setting.

\begin{define}
	Let \(p\colon \E \to \B\) be a \(2\)-functor between 2-categories. 

	\begin{itemize}
		\item A 1-cell \(f\colon x \to y\) in \(\E\) is \(p\)-\emph{cartesian} if the following square is a pullback of categories for every \(a\) in \(\Ob(\E)\):
		\[\begin{tikzcd}
		\E(a,x) \ar[rr,"f\circ-"] \ar[d,"p_{a,x}"{swap}]& & \E(a,y)\ar[d,"p_{a,y}"]\\
		\B(pa,px)\ar[rr,"p(f)\circ -"]& &\B(pa,py)
		\end{tikzcd}\]
		\item A 2-cell \(\alpha\colon f \Rightarrow g\colon x \to y\) in \(\E\) is \(p\)-cartesian if it is a \(p_{x,y}\)-cartesian 1-cell, with \(p_{x,y}\colon \E(x,y)\to \B(px,py)\).
	\end{itemize}
\end{define}

\begin{parag}
 \label{parag:cartesian_edge}
	We spell out explicitly the pullback defining a \(p\)-cartesian \(1\)-cell \(f \colon x \to y\),
	since we will need it in the following sections.
	The pullback gives existence and uniqueness property both at the level of \(1\)-cells
	and of \(2\)-cells.
	\begin{description}
		\item[\(1\)-cells] for every \(1\)-cell \(g \colon a \to y\) of \(\E\) and every
		\(1\)-cell \(k \colon pa \to px\) in \(\B\) such that \(pf\comp_0 k = pg\), there is
		a unique \(1\)-cell \(\bar k \colon a \to x\) such that \(f\bar k = g\) and \(p\bar{k} = k\).

		\item[\(2\)-cells] for every \(2\)-cell \(\alpha \colon g \to g' \colon a \to y\) of \(\E\) and every
		\(2\)-cell \(\tau \colon k \to h \colon pa \to px\) in \(\B\) such that \(pf \comp_0 \tau = p\alpha\),
		there is a unique \(2\)-cell \(\beta \colon \bar{k} \to \bar{h}\)
		such that \(f \comp_0 \beta = \alpha\) and \(p\beta = \tau\). 
	\end{description}
\end{parag}

\begin{parag}
The notion of cartesian fibration for \(2\)-categories amounts to the existence of enough cartesian lifts, as in the \(1\)-dimensional case, but it also requires an additional property: cartesian \(2\)-cells must be closed under horizontal composition. Note that, by definition and by the obvious fact that cartesian \(1\)-cells are closed under composition, cartesian \(2\)-cells are automatically closed under vertical composition.
\end{parag}

\begin{define}
	A \(2\)-functor between \(2\)-categories \(p\colon \E \to \B\) is called a \(2\)-\emph{fibration} if it satisfies the following properties:
	\begin{enumerate}
		\item\label{item:fibration-1} for every object \(e\in \E\) and every 1-cell \(f\colon b \to p(e)\) in \(\B\) there exists a \(p\)-cartesian 1-cell \(h\colon a \to  e\) in \(\E\) with \(p(h)=f\).
		\item\label{item:fibration-2} for every pair of objects \(x,y\) in \(\E\), the map \(p_{x,y}\colon \E(x,y)\to \B(px,py)\) is a Cartesian fibration of categories.
		\item\label{item:fibration-3} cartesian 2-cells are closed under horizontal composition, \ie for every triple of objects \((x,y,z)\) in \(\E\), the functor \(\circ_{x,y,z}\colon \E(y,z)\times \E(x,y)\to \E(x,z)\) sends \(p_{y,z}\times p_{x,y}\)-cartesian 1-cells to \(p_{x,z}\) ones.
	\end{enumerate} 
In this case we call \(\E\) the \ndef{total} \(2\)-category of the fibration \(p\), and \(B\) is said to be the \ndef{base} \(2\)-category.
\end{define}

\begin{rem}
	Observe that condition~\eqref{item:fibration-3} can be rephrased by requiring that given \(1\)-cells in \(\E\) of the form \(f\colon w\to x\) and \(g\colon y \to z\), the whiskering functors \(-\circ f\colon \E(x,y)\to \E(w,y)\) and \(g\circ - \colon \E(x,y)\to \E(x,z)\) preserve cartesian \(1\)-cells. This follows from the fact that horizontal composition can be obtained from vertical composition and whiskerings.
\end{rem}

\begin{define}\label{def:sub-car}
Given a cartesian 2-fibration \(p\colon \E \to \B\) we will denote by \(\E_{\car}\) the sub-2-category of \(\E\) spanned by all objects and the \(p\)-cartesian edges between them.
More precisely, \(\E_{\car}\) is the \(2\)-category whose objects
are the elements of \(\Ob(\E)\), and such that, for every pair
of objects \(x\) and \(y\),
the hom-category \(\E_{\car}\bigl(x, y\bigr)\)
is the full subcategory of \(\E\bigl(x,y\bigr)\)
spanned by \(p\)-cartesian edges. 
\end{define}

The motivation for introducing 2-fibrations is that they are a convenient way to encode functors into \(\nCat{2}\) or \(\nCat{2}_{\text{ps}}\).
More precisely, we have the following result, which is a combination of Theorems~2.2.11 and~3.3.12 in~\cite{BuckleyFibred}.

\begin{thm}
\label{S/U}
	There exists a biequivalence of \(2\)-categories between \(2\mathbf{Fib}_s(\B)_{\text{ps}}\) and \([\B^{op}_{co},\nCat{2}_{\text{ps}}]_{\text{ps}}\), the former being the \(2\)-category of \(2\)-fibrations over \(\B\) equipped with a choice of cartesian lifts compatible with composition, pseudo-functors preserving 1-cartesian and 2-cartesian cells
	and making the obvious triangle commute up-to-isomorphism and pseudo-natural transformations satisfying the obvious property,
	while the latter is the \(2\)-category of (strict) \(2\)-functors into \(\nCat{2}_{\text{ps}}\), pseudo-natural transformations and modifications.
\end{thm}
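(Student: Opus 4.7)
The plan is to build the biequivalence as an explicit pair of pseudo-inverse constructions: the Grothendieck construction (\emph{unstraightening}) \(\int \colon [\B^\op_\co, \nCat{2}_{\text{ps}}]_{\text{ps}} \to 2\mathbf{Fib}_s(\B)_{\text{ps}}\) and its inverse (\emph{straightening}) \(\Str\) recovering a pseudo-functor from a split fibration. Given \(F \colon \B^\op_\co \to \nCat{2}_{\text{ps}}\), one defines \(\int F\) to have objects pairs \((b, x)\) with \(x\) an object of \(F(b)\); a 1-cell \((b,x) \to (b', x')\) is a pair \((f, \phi)\) with \(f \colon b \to b'\) in \(\B\) and \(\phi \colon x \to F(f)(x')\) in \(F(b)\); a 2-cell \((f, \phi) \to (g, \psi)\) is a pair \((\alpha, \Xi)\) where \(\alpha \colon f \to g\) in \(\B\) and \(\Xi \colon \phi \to F(\alpha)_{x'} \comp_0 \psi\) in \(F(b)\). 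Compositions and identities are dictated by the pseudo-functoriality constraints of \(F\); the forgetful 2-functor \(p_F \colon \int F \to \B\) is then the candidate fibration.

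Next, I would verify that \(p_F\) is a 2-fibration with a canonical splitting. For axiom (\ref{item:fibration-1}), given \((b', x')\) and \(f \colon b \to b'\), the chosen cartesian lift is \((f, 1_{F(f)(x')}) \colon (b, F(f)(x')) \to (b', x')\); cartesianity reduces to the defining pullback property of the hom-categories, and the chosen lifts are strictly compatible with composition up to the pseudo-functor's coherence isomorphisms. For axiom (\ref{item:fibration-2}), one inspects the hom-category \(\int F((b,x), (b',x'))\) and shows that it projects as a Grothendieck fibration onto \(\B(b,b')\) using the (1-categorical) Grothendieck construction applied to the 2-functors \(F(-)_{x,F(-)(x')}\). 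Axiom (\ref{item:fibration-3}) is where the pseudo-functoriality coherence kicks in most visibly: horizontal composition of cartesian 2-cells is cartesian because the compositor 2-cells of \(F\) are invertible, so the factorisations combine into a cartesian factorisation for the horizontal composite.

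Conversely, given a 2-fibration \(p \colon \E \to \B\) with a choice of cartesian lifts, define \(\Str(p)(b) := p^{-1}(b)\) as a 2-category; for \(f \colon b \to b'\) in \(\B\), define \(\Str(p)(f) \colon p^{-1}(b') \to p^{-1}(b)\) on objects by picking the source of the cartesian lift of \(f\), and on 1- and 2-cells by the universal property of cartesianity. The compositors \(\Str(p)(g) \Str(p)(f) \cong \Str(p)(fg)\) and unitors come from the unique factorisations through composites of cartesian lifts, and the hexagon/pentagon coherence axioms follow from uniqueness in the cartesian universal property. The extension of both constructions to pseudo-functors between fibrations (over \(\B\)) and to pseudo-natural transformations is then routine: cartesian lifts give a canonical way to transport pseudo-functoriality data, while pseudo-natural transformations between pseudo-functors correspond precisely to fibred pseudo-functors commuting up to invertible 2-cell.

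Finally, one establishes that \(\Str \circ \int \simeq \Id\) and \(\int \circ \Str \simeq \Id\) as pseudo-natural equivalences of 2-categories. The first is essentially tautological up to relabeling of data. The second requires showing that, for a 2-fibration \(p\), the assignment \((b,x, \mathrm{lift}) \mapsto x\) is a biequivalence over \(\B\), which reduces via the characterisation recalled in~\ref{par: bieq} to surjectivity on objects (by definition of fibre), fullness on 1-cells (solve using cartesian factorisation), and fully faithfulness on 2-cells (another cartesian universal property). The main obstacle throughout is bookkeeping the pseudo-functor coherence data: the pentagons and triangles for compositors must be traced through every construction, and in particular through the horizontal composition axiom for cartesian 2-cells, which is what genuinely distinguishes 2-fibrations from a naive 1-dimensional generalisation and is the technical heart of Buckley's proof.
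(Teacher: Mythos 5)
This theorem is not proved in the paper at all: it is imported verbatim as ``a combination of Theorems~2.2.11 and~3.3.12 in~\cite{BuckleyFibred}'', so there is no internal proof to compare against. Your outline is essentially the standard straightening/unstraightening argument that Buckley carries out in detail in the cited reference: the explicit Grothendieck construction \(\int F\), the verification of the three fibration axioms (with the horizontal-composition axiom for cartesian 2-cells as the genuinely 2-dimensional point), the inverse construction on fibres via chosen cartesian lifts, and the two comparison equivalences. As a proof strategy it is sound and matches the source. Two small calibration points are worth flagging. First, the objects of \([\B^{\op}_{\co},\nCat{2}_{\text{ps}}]_{\text{ps}}\) are \emph{strict} 2-functors, so the ``compositor 2-cells of \(F\)'' you invoke are identities; correspondingly, the subscript \(s\) on \(2\mathbf{Fib}_s(\B)_{\text{ps}}\) records that the chosen cleavage is compatible with composition on the nose, and it is exactly this strictness that makes the round-trip \(\Str\circ\int\) an isomorphism rather than merely an equivalence (cf.\ Remark~\ref{rem:S/U}, where the construction is noted to be surjective on objects up to isomorphism). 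Your phrase ``strictly compatible with composition up to the pseudo-functor's coherence isomorphisms'' blurs this and should be tightened to match whichever side of the strict/pseudo divide you are on. Second, a complete proof must also specify the 1- and 2-cells of \(2\mathbf{Fib}_s(\B)_{\text{ps}}\) carefully (pseudo-functors over \(\B\) preserving both 1- and 2-cartesian cells) and check that \(\int\) and \(\Str\) are functorial at that level; you wave at this as ``routine'', which is acceptable for a sketch but is where most of the bookkeeping in Buckley's proof actually lives.
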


\begin{rem}
 \label{rem:S/U}
	In the proof of Theorem~\ref{S/U} (specifically in the proof of~\cite[Theorem~2.2.11]{BuckleyFibred}), Buckley exhibits a ``Grothendieck construction''
	2-functor \([\B^{op}_{co},\nCat{2}] \to 2\mathbf{Fib}_s(\B)\) (in fact, a 3-functor, but we will not need this additional structure)
	which is surjective on objects up-to-\emph{isomorphism}. In particular, such a
	biequivalence of \(2\)-categories preserves and reflects equivalences
	(cf.~\ref{par: bieq}); the equivalences of \(2\mathbf{Fib}_s(\B)\) are simply the
	cartesian preserving pseudo-functors that are biequivalences of 2-categories,
	while the equivalences of \([\B^{op}_{co},\nCat{2}]\) are the pseudo-natural transformations
	that are object-wise biequivalences of 2-categories.

	The reason why we need to introduce pseudo-functoriality and pseudo-naturality is that
	the quasi-inverse of a strict 2-functor \(F \colon \A \to \B\) that is a biequivalence
	is \emph{not} in general a strict 2-functor, but instead a pseudo-functor (see~\cite[Example~3.1]{LackModel2Cat}).
	In turn, via this biequivalence this also reflects the fact that given a 2-natural transformation
	between two 2-functors \(F, G \colon \A \to \nCat{2}\) that is object-wise a biequivalence, then
	the transformation built up using the object-wise quasi-inverses is \emph{not} a strict 2-natural transformation in general but instead a pseudo-natural one,
	even if the quasi-inverses are all strict 2-functors. This phenomenon is already evident for
	2-natural transforformations between 2-functors with values in the 2-subcategory \(\Cat\) of \(\nCat{2}\)
	that are object-wise equivalences; in fact, this will be the case of interest for us.
\end{rem}

\begin{notate}
	By considering fibrations over \(\B^{\mathrm{op}}\),  \(\B_\mathrm{co}\) or \(\B^{\mathrm{op}}_{\mathrm{co}}\) we obtain four different variants of 2-fibrations, corresponding to the four possible types of variance for \(2\)-functors \(\B \to \nCat{2}\). Instead of using four different names, we will adopt the name \ndef{fibration} for each of these cases, and specify the variance when needed.
\end{notate}

\begin{parag}
	In the same paper~\cite{BuckleyFibred}, Buckley proves several weakening of
	Theorem~\ref{S/U}, by looking at fibrations without a choice of lifts (which correspond to \emph{pseudofunctors}) and fibrations of bicategories. We content ourself with the strict case as this is the level of generality needed for this work.
\end{parag}

\begin{rem}
	If every \(2\)-cell in the total \(2\)-category \(\E\) of a fibration \(p\colon \E \to \B\) is \(p\)-cartesian,
	then the fibers \(\E_x\) are \((2,1)\)-categories for every \(x \in \B\),
	\ie \(2\)-categories where every \(2\)-cell is invertible.
	Furthermore, there is at most a \(2\)-cell between each pair of
	\(1\)-cells \(f,g\) in \(\E_x\), so we can view these fibers as \(1\)-categories,
	by quotienting out these invertible \(2\)-cells. We shall call \emph{1-fibrations}
	this class of fibrations.
\end{rem}

\begin{example}
	One of the canonical examples of a 1-fibration is given by slice projections. Given a \(2\)-category \(\B\) and an object \(x \in \B\), it is easy to see that the projection \(p \colon \trbis{\B}{x} \to \B\) from the lax slice of \(\B\) over \(x\) to \(\B\) is a 1-fibration. It corresponds to the hom-2-functor \(\B(-,x) \colon \B^{\op} \to \Cat\). In particular, every \(2\)-cell in \(\trbis{\B}{x}\) is \(p\)-cartesian, which is the reason why the associated \(2\)-functor factors through the inclusion \(\Cat \hookrightarrow \nCat{2}\).
\end{example}

We will only be using the part of Theorem~\ref{S/U} that deals with 1-fibrations,
that we now record.

\begin{cor}
	There exists a biequivalence of \(2\)-categories between \(1\mathbf{Fib}_s(\B)_{\text{ps}}\) and \([\B^{op}_{co},\Cat]_{\text{ps}}\), the former being the \(2\)-category of \(1\)-fibrations equipped with a choice of cartesian lifts compatible with composition, pseudo-functors preserving 1-cartesian cells
	and making the obvious triangle commute up-to-isomorphism and pseudo-natural transformations satisfying the obvious property,
	while the latter is the \(2\)-category of (strict) \(2\)-functors into \(\Cat\),
	pseudo-natural transformations and modifications.
\end{cor}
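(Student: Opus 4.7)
The plan is to deduce this corollary directly from Theorem~\ref{S/U} by restricting the biequivalence to the appropriate full sub-2-categories on both sides and verifying that the restriction lands in the stated target. Concretely, I will identify $1\mathbf{Fib}_s(\B)_{\text{ps}}$ as a full sub-2-category of $2\mathbf{Fib}_s(\B)_{\text{ps}}$ and $[\B^{op}_{co},\Cat]_{\text{ps}}$ as a full sub-2-category of $[\B^{op}_{co},\nCat{2}_{\text{ps}}]_{\text{ps}}$, and then argue that Buckley's biequivalence restricts to a biequivalence between these.

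First, I would verify that the Grothendieck construction $[\B^{op}_{co},\nCat{2}_{\text{ps}}]_{\text{ps}} \to 2\mathbf{Fib}_s(\B)_{\text{ps}}$ of Theorem~\ref{S/U} sends a 2-functor $F$ that factors through $\Cat \hookrightarrow \nCat{2}_{\text{ps}}$ to a 1-fibration. This is essentially an inspection of Buckley's construction: the fibers of the resulting 2-fibration are (equivalent to) the values $F(x)$, which are 1-categories under our hypothesis, and inspection of the 2-cells in the total 2-category shows that all 2-cells are cartesian precisely when each $F(x)$ is a 1-category (the vertical component of a 2-cell in the total space lies in a fiber, and being cartesian in the fiber is automatic when the fiber is a 1-category with at most one 2-cell between any pair of parallel 1-cells).

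Conversely, I would verify that if $p\colon \E \to \B$ is a 1-fibration, then the pseudo-functor $\B^{op}_{co} \to \nCat{2}_{\text{ps}}$ associated to it via Buckley's (un)straightening factors through $\Cat$. By the Remark preceding the corollary, the fibers $\E_x$ can be viewed as 1-categories, so the pseudo-functor assigning $x \mapsto \E_x$ (and pulling back along cartesian lifts) lands in $\Cat$. By the compatibility of the biequivalence with equivalences (see Remark~\ref{rem:S/U}), this pseudo-functor is equivalent to a 2-functor with values in $\Cat$, hence defines an object of $[\B^{op}_{co},\Cat]_{\text{ps}}$ up to equivalence. This shows essential surjectivity of the restricted Grothendieck construction.

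For fullness on 1-cells and fully-faithfulness on 2-cells in the restricted setting, I would observe that both $1\mathbf{Fib}_s(\B)_{\text{ps}} \hookrightarrow 2\mathbf{Fib}_s(\B)_{\text{ps}}$ and $[\B^{op}_{co},\Cat]_{\text{ps}} \hookrightarrow [\B^{op}_{co},\nCat{2}_{\text{ps}}]_{\text{ps}}$ are full sub-2-category inclusions: a pseudo-functor between 1-fibrations automatically preserves cartesian 2-cells (there is essentially nothing to preserve in the 1-categorical fibers), and a pseudo-natural transformation between functors valued in $\Cat$ is the same as one in $\nCat{2}_{\text{ps}}$. Together with the matching under the biequivalence of these two sub-2-categories established in the preceding paragraph, the three conditions listed in~\ref{par: bieq} for being a biequivalence are inherited from the biequivalence of Theorem~\ref{S/U}. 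The main obstacle is the bookkeeping in the essential surjectivity step: one must carefully compare the Grothendieck construction output of a strictified 2-functor valued in $\Cat$ with an arbitrary 1-fibration up to the appropriate equivalence, and check that the resulting pseudo-natural transformation is really an equivalence in $1\mathbf{Fib}_s(\B)_{\text{ps}}$ and not merely in $2\mathbf{Fib}_s(\B)_{\text{ps}}$ — but since the former is a full sub-2-category of the latter, this follows automatically.
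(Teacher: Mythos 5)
Your proposal is correct and follows the same route the paper (implicitly) takes: the paper offers no proof, presenting the corollary as the direct restriction of Theorem~\ref{S/U} to the full sub-2-categories of 1-fibrations on one side and of \(\Cat\)-valued 2-functors on the other, which is precisely the restriction argument you spell out. The details you supply — that the Grothendieck construction of a \(\Cat\)-valued functor has all 2-cells cartesian, that a 1-fibration straightens to a functor equivalent to one valued in \(\Cat\), and that both inclusions are full — are the right ones and are consistent with the paper's Remark on 1-fibrations and Remark~\ref{rem:S/U}.
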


Given the explicit description of the biequivalence in Theorem~\ref{S/U}
provided by Buckley in his paper, we can detail
the action on objects of the biequivalence of the previous corollary.

\begin{prop}
	\label{prop:universal_fibration}
	The following facts hold true:
	\begin{enumerate}
		\item The fibration corresponding to the identity \(2\)-functor \(\Id \colon \Cat \to \Cat\) is given by the forgetful functor
		\[
			\mathbb{U}\colon \overslice{\Cat}{\Di_0} \to \Cat
		\]
		from the lax slice of \(\Cat\) under the terminal category to \(\Cat\).
		\item The fibration \(p\colon \E \to \B\) associated with a \(2\)-functor \(F\colon \B \to \Cat\) is obtained by forming the pullback displayed below.
		\[
			\begin{tikzcd}
				\E \ar[d,"p"{swap}] \ar[r] & \overslice{\Cat}{\Di_0} \ar[d,"\mathbb{U}"] \\
				\B \ar[r,"F"] & \Cat .
			\end{tikzcd}
		\]
	\end{enumerate}
\end{prop}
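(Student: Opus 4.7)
I would verify part (1) by direct inspection of the forgetful functor and then deduce part (2) from part (1) by invoking the naturality of Buckley's Grothendieck construction under pullback.

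For part (1), my first step is to unpack $\overslice{\Cat}{\Di_0}$ using the dual of the description in paragraph~\ref{parag:lax_slice_under_point}: an object is a pair $(X, x)$ with $X$ a small category and $x \in \Ob(X)$; a $1$-cell $(X, x) \to (Y, y)$ is a pair $(f, \gamma)$ where $f \colon X \to Y$ is a functor and $\gamma$ is a morphism of $Y$ relating $y$ and $f(x)$; a $2$-cell is a natural transformation $f \Rightarrow f'$ compatible with the fillings. The forgetful functor $\mathbb{U}$ forgets the distinguished object. The next step is to check that $\mathbb{U}$ is a $1$-fibration in the appropriate variance among the four permitted by the Notate paragraph before the proposition. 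The cartesian $1$-cells are exactly those of the form $(f, 1_{f(x)})$ with the comparison $2$-cell an identity; the universal property of paragraph~\ref{parag:cartesian_edge} follows by inspection of the composition law in the slice. Every $2$-cell is cartesian as well, because a $2$-cell in the slice is a natural transformation in $\Cat$ subject to a compatibility condition that pins it down uniquely given the data of the source and the image under $\mathbb{U}$.

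To identify the straightening of $\mathbb{U}$ with $\Id$, I would compute fibers and transition functors directly. Each fiber $\mathbb{U}^{-1}(X)$ is canonically equivalent to $X$ (its objects are the objects of $X$, its $1$-cells are in natural bijection with the morphisms of $X$, and its $2$-cells are forced to be identities). For a functor $f \colon X \to Y$, the transition functor induced by the cartesian lifts above sends $x \mapsto f(x)$ and acts by $f$ on morphisms, \ie it coincides with $\Id(f)$. Via Buckley's explicit Grothendieck construction (Theorem~\ref{S/U} and Remark~\ref{rem:S/U}), this identifies $\mathbb{U}$ with the fibration associated to $\Id$.

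For part (2), the argument is formal once part (1) is established. A pullback of a $1$-fibration along an arbitrary $2$-functor is again a $1$-fibration, with cartesian lifts induced componentwise, so $p \colon \E \to \B$ is a $1$-fibration. Moreover, Buckley's biequivalence is natural under such pullbacks: the straightening of the pullback of $\mathbb{U}$ along $F$ is the precomposition of the straightening of $\mathbb{U}$ with $F$, which by part~(1) is $\Id \circ F = F$. Alternatively, one may check directly that the fiber of $p$ over $b \in \B$ is $F(b)$ and that the transition functors agree with $F$ on morphisms. The main obstacle in the whole argument is bookkeeping the variance conventions of Theorem~\ref{S/U}: one must ensure that the orientation chosen for the comparison $2$-cell $\gamma$ in the lax slice is consistent with the variance under which $\Id$ and $F$ are interpreted as the corresponding straightenings. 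Once this has been pinned down, the remaining verifications are routine unpackings of the construction.
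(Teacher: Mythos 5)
The paper states this proposition without proof, presenting it as a direct read-off from Buckley's explicit description of the Grothendieck construction (see the sentence immediately preceding it); your verification by unpacking the lax coslice, checking the fibration property, computing fibers and transition functors, and then deducing part (2) from stability of fibrations under pullback together with the compatibility of Buckley's construction with precomposition is exactly the intended argument and is essentially correct. One small imprecision: the \(\mathbb{U}\)-cartesian \(1\)-cells of \(\overslice{\Cat}{\Di_0}\) are not \emph{exactly} those whose comparison cell is an identity, but those whose comparison cell is invertible (compare Proposition~\ref{prop:cartesian_edges} for the general slice fibration); this does not affect your argument, since the identity-filled lifts are cartesian and suffice to establish the fibration property and to compute the transition functors. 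You rightly flag the variance bookkeeping as the only delicate point; note that the paper itself handles the contravariant case in Example~\ref{example:slice_classifies_cones} by passing to \(\bigl(\overslice{\Cat}{\Di_0}\bigr)^{\op} \to \Cat^{\op}\), which is how the orientation of the comparison cell gets reconciled with the variance in Theorem~\ref{S/U}.
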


	\begin{example}
	 \label{example:slice_classifies_cones}
		The slice fibration \(\mCF \to \C\) classifies the 2-functor
		\[
			[\J, \C]_E(\Delta -, F) \colon \C^\op \to \Cat.
		\]
		Indeed, thanks to Proposition~\ref{prop:universal_fibration}
		we have to compute the pullback
		\[
			\begin{tikzcd}[column sep=5em]
				\E \ar[d,"p"{swap}] \ar[r] & \bigl(\overslice{\Cat}{\Di_0}\bigr)^\op \ar[d,"\mathbb{U}"] \\
				\C \ar[r,"{[\J, \C]_E(\Delta -{,} F)}"{swap}] & \Cat^\op .
			\end{tikzcd}
		\]

		An object of \(\E\) is a pair \((x, \alpha)\), where \(x\) is an object of \(\C\)
		and \(\alpha\) an object of \([\J, \C]_E(\Delta x, F)\). That is, \((x, \alpha)\)
		is a \(E\)-lax \(F\)-cone with vertex \(x\), which by what we observed in paragraph~\ref{parag:lax_cone}
		corresponds precisely to an object \((x, \alpha)\) in \(\mCF\).

		A 1-cell in \(\E\) from \((x, \alpha)\) to \((y, \beta)\) is a pair \((f, \mu)\),
		where \(f \colon x \to y\) is a 1-cell in \(\C\) and \(\mu \colon \beta \cdot \Delta f \to \alpha\)
		is a modification. For every object \(i\) in \(\J\), this corresponds
		to a commutative diagram
		\[
			\begin{tikzcd}[column sep=tiny]
					x \ar[rr, "f"] \ar[rd, "\alpha_i"{swap, name=t}]	&	& y \ar[ld, "\beta_i"]	\\
																	& Fi &
					\ar[Rightarrow, from=1-3, to=t, shorten <=3mm, shorten >=3mm, "\mu_i"']
			\end{tikzcd}
		\]
		in \(\C\), \ie the pair \((f, \mu)\) is a 1-cell of \(\mCF\).

		A 2-cell in \(\E\) from \((f, \mu)\) to \((f', \mu')\) corresponds to
		a 2-cell \(\Xi\) of \(\C\) which is a modification from \(\mu\) to \(\mu'\).
		That is, for every \(i\) in \(\J\) we have a commutative diagram
		\[
				\begin{tikzcd}[column sep=small, row sep=3.2em]
					x \ar[rr, bend left, "f", ""{swap, name=s}] \ar[rr, bend right, ""{name=t}, "f'"{above right = 0pt and 3pt}]
						\ar[rd, "\alpha_i"{swap, name=a}]	&		&
								y \ar[ld, "\beta_i"{pos=0.4}, ""{swap, pos=0.1, name=b}]	\\
															& Fi 	&
					\ar[Rightarrow, from=s, to=t, "\Xi"{swap}]
					\ar[Rightarrow, from=b, to=a, shorten <=3mm, shorten >=3mm, "\mu_i"]
				\end{tikzcd}
				=
				\begin{tikzcd}[column sep=small, row sep=3.2em]
					x \ar[rr, bend left, "f"] \ar[rd, "\alpha_i"{swap, name=a}]
						&		& y \ar[ld, "\beta_i"{pos=0.4}, ""{swap, pos=0.1, name=b}]	\\
						& Fi 	&
					\ar[Rightarrow, from=b, to=a, shorten <=3mm, shorten >=3mm, "\mu'_i"']
				\end{tikzcd}
			\]
		This is precisely a 2-cell \(\Xi \colon (f, \mu) \to (f', \mu')\) in \(\mCF\).
	\end{example}

\section{Bifinality}
\label{sec:bilimits}

In this section we motivate and provide the definition of contraction and
bifinal object and we study some of their basic properties.

\subsection{Contractions and local terminality}
\label{sec:contractions}

\begin{parag}
	Recall that a category \(C\) has a terminal object \(c\) if and only if
	the canonical projection functor \(\trbis{C}{c} \to C\) has a section
	mapping \(c\) to \(1_c\). This is straightforward, since such a section
	provides for each object \(x\) of \(C\) a morphism \(\gamma_x \colon x \to c\) and moreover
	for every morphism \(f \colon x \to c\) the triangle
	\[
		\begin{tikzcd}[column sep=tiny]
			x \ar[rd, "\gamma_x"'] \ar[rr, "f"]	&	& c \ar[ld, equal]	\\
												& c &
		\end{tikzcd}
	\]
	of \(C\) must commute.
\end{parag}

\begin{define}
\label{contractions and bifinal objects}
	Let \(\C\) be a 2-category and \(c\) an object of \(\C\).
	A \ndef{contraction on \(\C\) with center \(c\)} is section \(\gamma \colon \C \to \trbis{\C}{c}\) of
	the canonical projection \(\trbis{\C}{c} \to \C\), \ie
	a 2-functor \(\C \to \trbis{\C}{c}\) making the triangle
	\[
		\begin{tikzcd}
			\C \ar[r] \ar[rd, equal]	& \trbis{\C}{c} \ar[d]	\\
										& \C
		\end{tikzcd}
	\]
	of 2-categories commute, such that:
	\begin{itemize}
	\item
	the image of the center \(c\) is equal to \(1_c\); and 
	\item
	for every \(x\) in \(\Ob(\C)\) the \(2\)-cell \(\gamma(\gamma(x))\) (that we will denote by \(\gamma^2_{x}\)) is the identity of $\gamma_x$.
	\end{itemize}
	We will also say that the object \(c\) is \ndef{bifinal} if
	there is a contraction on \(\C\) having \(c\) as center.
\end{define}

\begin{parag}
\label{parag:contraction_explicit}
	Let us spell out the content of the definition of contraction. Suppose we have such a contraction~\(\gamma\)
	on \(\C\) with center \(c\).
	\begin{itemize}
		\item For every \(x\) in \(\Ob(\C)\) we have a \(1\)-cell \(\gamma_x \colon x \to c\), such that \(\gamma_c = 1_c\).

		\item For every \(1\)-cell \(f \colon x \to y\) of \(\C\) we have a \(2\)-cell 
			\[
				\begin{tikzcd}[column sep=tiny]
					x \ar[rr, "f"] \ar[rd, "\gamma_x"{swap, name=t}]	&	& y \ar[ld, "\gamma_y"]	\\
																	& c &
					\ar[Rightarrow, from=1-3, to=t, shorten <=3mm, shorten >=3mm, "\gamma_f"']
				\end{tikzcd}
			\]
			such that \(\gam_{\gam_x} = \Id_{\gam_x}\). 
		\item For every composable pair of 1-cells \(f,g\) the relation \(\gam_{g \comp_0 f} = \gam_{f} \comp_1 (\gam_{g} \comp_0 f)\) holds.
		\item For every \(2\)-cell \(\alpha \colon f \to g\) of \(\C\)
			the relation
			\[
				\gamma_f = \gamma_g \comp_1 (\gamma_y \comp_0 \alpha)
			\]
			holds.
	\end{itemize}
	From this explicit description it is easy to see that a contraction on \(\C\)
	with center~\(c\) also corresponds to the datum of an object \(\gamma\) in \([\C, \C](1_{\C}, \Delta c)\)
	where \(\gamma_c = 1_c\) and such that the collection of the 1-cells of the form \(\gamma_x\)
	are mapped by \(\gamma\) to identities. A contraction uniquely determines
	a set \(E_c\) of 1-cells consisting of (the identities together with) the 1-cells of the form \(\gamma_x\).
	With this at hand, it is clear that such a contraction determines an object of
	\([\C, \C]_{E_c}(1_{\C}, \Delta c)\).
\end{parag}

\begin{parag}
	The notion of contraction in the more general case
	of strict \(\infty\)-categories is due to Ara and Maltsiniotis~\cite{AraMaltsiCondE}
	and it also appears under the name of ``initial/terminal structure'' in an
	unpublished text of Burroni~\cite{BurroniOrientals}.
	Beware that what we call here contraction for simplicity is in fact
	the dual of the notion given in~\cite{AraMaltsiCondE}:
	our notion of contraction corresponds to what they call
	\ndef{dual contraction}.
	Notice also that their definition is equivalent but stated differently.
	In fact, they use the notion of (lax) Gray
	tensor product in the definition of contraction.
	This is a closed monoidal structure that was first introduced by Gray on 2-categories~\cite{GrayFormal} and was later
	generalized to strict higher categories by Al-Agl and Steiner~\cite{AlAglSteiner}
	and alternatively by Crans in his thesis~\cite{CransThesis}.

	With the Gray tensor product \(\otimes \colon \nCat{2} \times \nCat{2} \to \nCat{2}\)
	at our disposal, we can define a contraction
	on \(\C\) with center \(c\) to be a 2-functor
	\[
		\gamma \colon \Di_1 \otimes\, \C \to \C
	\]
	such that
	\begin{itemize}
		\item the restriction to \(\gamma_{|\{0\} \times \C}\) is the identity on \(\C\)
		and the restriction to \(\gamma_{|\{1\} \times \C}\) is constant on \(c\),

		\item  the image of \(\bigl(\{0 \to 1\}, \{c\}\bigr)\) is the identity of \(c\), and

		\item for every \(x\) in \(\Ob(\C)\) the image of \(\gamma_x = \gamma\bigl(\{0 \to 1\}, x\bigr)\)
		is the identity \(2\)-cell.
	\end{itemize}
\end{parag}

\begin{rem}
\label{parag:apriori-differ}
	A priori, one might want to weaken the notion of contraction, according to
	the homotopy coherent approach  
	of (weak) higher categories.
	Indeed, for a contraction \(\gamma\) on \(\C\) with center \(c\),
	we might request the 
	2-cell \(\gamma_{\gamma_x}\) to be an invertible 2-cell,
	while we require the stronger condition of being the identity of \(\gamma_x\).
	However, in our 2-categorical framework this stronger condition follows from the
	relations we impose on 2-cells. More precisely, the 2-cell
	\(\gamma_{\gamma_x} \colon \gamma_x \to \gamma_x \colon x \to c\)
	must satisfy the equality
	\[
		\gamma_{\gamma_x} = \gamma_{\gamma_x} \comp_1 \gamma_{\gamma_x}.
	\]
	As \(\gamma_{\gamma_x}\) is an invertible 2-cell, this implies that
	it must be the identity 2-cell of \(\gamma_x\).
\end{rem}

\begin{parag}
 \label{parag:uniqueness_contractions}
	If \(c\) is a bifinal object of a 2-category \(\C\), then the
	contractions we can associate to it are essentially unique.
	Indeed, let \(\alpha\) and \(\beta\) be two contractions on \(\C\)
	with center~\(c\). For every object \(x\) of \(\C\) we have two
	\(1\)-cells \(\alpha_x, \beta_x \colon x \to c\).
	Applying \(\alpha\) to \(\beta_x\) and \(\beta\) to \(\alpha_x\)
	we get two \(2\)-cells
	\[
		\begin{tikzcd}[column sep=1em, row sep=2em]
			x \ar[rr, "\beta_x"] \ar[rd, "\alpha_x"{swap, name=t}]	&	& c \ar[ld, equal]	\\
																	& c &
					\ar[Rightarrow, from=1-3, to=t, shorten <=3mm, shorten >=3mm, "\alpha_{\beta_x}"']
		\end{tikzcd}
		\quad\text{and}\quad
		\begin{tikzcd}[column sep=1em, row sep=2em]
			x \ar[rr, "\alpha_x"] \ar[rd, "\beta_x"{swap, name=t}]	&	& c \ar[ld, equal]	\\
																	& c &
					\ar[Rightarrow, from=1-3, to=t, shorten <=3mm, shorten >=3mm, "\beta_{\alpha_x}"']
		\end{tikzcd}
	\]
	of \(\C\). Finally, applying once more \(\alpha\) and \(\beta\) to \(\beta_{\alpha_x}\) and
	\(\alpha_{\beta_x}\) respectively we get the relations
	\[
		\alpha_{\beta_x} \comp_1 \beta_{\alpha_x} = 1_{\alpha_x}
		\quad \text{and} \quad
		\beta_{\alpha_x} \comp_1 \alpha_{\beta_x} = 1_{\beta_x}.
	\]
	Hence, \(\alpha_{\beta_x}\) and \(\beta_{\alpha_x}\) are inverses of one another.
	Similarly, for every \(1\)-cell \(f \colon x \to y\) of \(\C\) one gets
	\begin{equation}
	\label{eq: compatibility alpha-beta}
		\alpha_{\beta_x} \comp_1 \beta_f =
		\alpha_f \comp_1 (\alpha_{\beta_y} \comp_0 f) 
	\end{equation}
	so that \(\alpha_f\) and \(\beta_f\) can be obtained one from the other
	via compositions with invertible 2-cells.
	In particular, equation \eqref{eq: compatibility alpha-beta} provides a pseudo natural transformation
	\(\rho \colon \alpha \to \beta \colon \C \to \trbis{\C}{c}\)
	defined by \(\rho_x = (1_x, \alpha_{\beta_x})\) and for which the square
	\[
		\begin{tikzcd}
			\alpha_x \ar[r, "\alpha_f"] \ar[d, "\rho_x"']	& \alpha_y \ar[d, "\rho_y"] \\
			\beta_x \ar[r, "\beta_f"]						& \beta_y
		\end{tikzcd}
	\]
	in \(\trbis{\C}{c}\) commutes for every 1-cell \(f \colon x \to y\) of \(\C\).
\end{parag}

The above paragraph shows that if a contraction with center \(c\) exists then it is essentially unique. The following result concerns the existence of such a contraction, relating it to the existence of an appropriate supply of terminal arrows with target~\(c\).

\begin{prop}\label{prop:contraction-terminality}
Let \(\C\) be a 2-category and \(c \in \Ob(\C)\) an object. Let \(f_a\colon a \to c\) be a choice of a morphism from each object \(a \in \Ob(\C)\) to \(c\). Then the following are equivalent:
\begin{enumerate}
\item
There exists a contraction \(\gamma\) with center \(c\) such that \(\gamma_a = f_a\) for every \(a\).
\item
Every \(f_a\) is terminal in \(\C(a,c)\) and \(f_c = 1_c\). 
\end{enumerate}
\end{prop}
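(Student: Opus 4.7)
The plan is to prove the two implications by direct construction, using the terminality conditions to supply (and uniquely determine) the 2-cell data of the contraction.

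For the implication $(1) \Rightarrow (2)$, I would fix a contraction $\gamma$ with $\gamma_a = f_a$ and $\gamma_c = 1_c = f_c$. Given any $g \colon a \to c$, observe that, viewing $g$ as a $1$-cell of $\C$, the contraction assigns the $2$-cell $\gamma_g \colon \gamma_c \comp_0 g = g \to \gamma_a = f_a$; this provides at least one $2$-cell from $g$ to $f_a$ in $\C(a,c)$. For uniqueness, let $\alpha \colon g \to f_a$ be an arbitrary $2$-cell in $\C(a,c)$. The compatibility condition of paragraph~\ref{parag:contraction_explicit} applied to $\alpha$ reads
\[
\gamma_g = \gamma_{f_a} \comp_1 (\gamma_c \comp_0 \alpha) = \gamma_{f_a} \comp_1 \alpha,
\]
and because $\gamma_{f_a} = \gamma_{\gamma_a} = 1_{\gamma_a} = 1_{f_a}$ by definition of a contraction, we conclude $\gamma_g = \alpha$. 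Hence $\alpha$ is uniquely determined, so $f_a$ is terminal in $\C(a,c)$.

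For the implication $(2) \Rightarrow (1)$, I would build $\gamma$ component by component, letting terminality supply (and pin down) every $2$-cell. On objects, set $\gamma_x \coloneqq f_x$; this sends $c$ to $1_c$ by hypothesis. For each $1$-cell $f \colon x \to y$, the category $\C(x,c)$ has the terminal object $f_x$, so there exists a unique $2$-cell
\[
\gamma_f \colon f_y \comp_0 f \lrar f_x.
\]
The required axioms then follow by uniqueness. Taking $f = f_x = \gamma_x$ gives a $2$-cell $\gamma_{\gamma_x} \colon f_c \comp_0 f_x = f_x \to f_x$, which must equal $1_{f_x}$ since $1_{f_x}$ is one such $2$-cell and terminality forces uniqueness. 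For a composable pair $x \xrightarrow{f} y \xrightarrow{g} z$, both $\gamma_{g\comp_0 f}$ and $\gamma_f \comp_1 (\gamma_g \comp_0 f)$ are $2$-cells $f_z \comp_0 g \comp_0 f \to f_x$, hence coincide by terminality of $f_x$. Similarly, for a $2$-cell $\alpha \colon f \to g$ of $\C$, both $\gamma_f$ and $\gamma_g \comp_1 (f_y \comp_0 \alpha)$ land in $\C(x,c)$ with target $f_x$, so are equal.

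The main (minor) technical point to double-check is the interplay between the axiom $\gamma_{\gamma_x} = 1_{\gamma_x}$ and the compatibility relation with $2$-cells in the forward direction: as highlighted above, it is exactly the conjunction of these two that forces $\gamma_g$ to be the unique morphism to $f_a$ in $\C(a,c)$. There is no hard combinatorial obstruction; the whole argument is a bookkeeping exercise in which terminality in the hom-categories provides both existence and uniqueness of every piece of structure demanded by the explicit description of a contraction given in~\ref{parag:contraction_explicit}.
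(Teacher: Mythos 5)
Your proof is correct and follows essentially the same route as the paper: for $(1)\Rightarrow(2)$ you use the compatibility axiom applied to an arbitrary $2$-cell $\alpha\colon g \to f_a$ together with $\gamma_{\gamma_a}=1_{\gamma_a}$ to force $\alpha = \gamma_g$, and for $(2)\Rightarrow(1)$ you define $\gamma_f$ as the unique $2$-cell into the terminal object $f_x$ of $\C(x,c)$ and verify all the contraction axioms by uniqueness of $2$-cells into a terminal object. This matches the paper's argument step for step.
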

\begin{proof}
	Assume given a contraction \(\gamma\) with center \(c\) and \(\gamma_a = f_a\) for every \(a\), so that in particular \(f_c=1_c\) by definition. 	Then for every \(g \colon a \to c\) we have
	a 2-cell \(\gamma_g \colon g \to \gamma_a = f_a\). We claim that this is the unique 2-cell from \(g\) to \(\gamma_a\). Indeed, Given another \(2\)-cell \(\beta\colon g \to \gamma_a\), we can apply \(\gamma\) to it, resulting in the equality \(\gamma_g = \gamma_{\gamma_a}\comp_1{\beta}\). Since \(\gamma_{\gamma_a}\) is an identity 
	\(2\)-cell, we get that \(\beta\) and \(\gamma_g\) must coincide. 

	On the other hand, suppose that \(f_c=1_c\) and that each \(f_a\) is terminal in \(\C(a,c)\). We define a contraction \(\gamma\) as follows. On objects we define \(\gamma_a = f_a\) and on 1-cells
	\(g \colon a \to b\) we set \(\gamma_g\) to be the unique 2-cell \(\gamma_b \comp_0 g \to f_a\).
	We have to check that this assignment defines a contraction.
	For every pair \(x \xrightarrow{f} y \xrightarrow{g} z\) of composable 1-cells of \(\C\),
	we have two parallel 2-cells \(\gamma_{g \comp_0 f}\) and \(\gamma_f \comp_1 (\gamma_g \comp_0 f)\) from \(\gamma_z \comp_0 g \comp_0 f\) to \(\gamma_x\).
	As the target \(\gamma_x\) is a terminal object in \(\C(x, c)\),
	it follows that these 2-cells must be equal. For the same reason, for every \(2\)-cell \(\Xi \colon f \to g\) of \(\C\)
			the relation
			\[
				\gamma_f = \gamma_g \comp_1 (\gamma_y \comp_0 \Xi)
			\]
			holds.
	Finally, \(\gamma_{\gamma_a} \colon  \gamma_a \Rightarrow \gamma_a\) is a 2-cell from a terminal 1-cell to itself and is hence the identity. 
\end{proof}

\begin{rem}
	Proposition~\ref{prop:contraction-terminality} implies in particular that if there exists a contraction with center \(c\) then \(c\) is quasi-terminal.
	This fact 
	is proven in much greater generality (for strict \(\infty\)-categories)
	by Ara and Maltsiniotis in~\cite[Proposition~B.13]{AraMaltsiCondE}.
\end{rem}

\subsection{Marked bifinality and final 2-functors}

	In category theory, a functor \(u \colon J \to C\) is final if for
	every functor \(F \colon C \to D\) the colimit \(\varinjlim F\) exists
	if and only if the colimit \(\varinjlim Fu\) does and, whenever they exist,
	the canonical morphism \(\varinjlim Fu \to \varinjlim F\) is an isomorphism.
	We now give the appropriate 2-categorical generalization.

	\begin{define}
	\label{def:final-2-functor}
	A 2-functor \(u \colon (\C,E_{\C}) \to (\D,E_{\D})\) of marked 2-categories
	is said to be \ndef{final} if
	for	every 2-functor \(F \colon \D \to \E\) with \(\E\) a 2-category, and for every \(x \in \E\), the induced map
\[	[\D, \E]_{E_{\D}}(F,\Delta x) \to [\C, \E]_{E_{\C}}(Fu,\Delta x) \]
is an equivalence of categories.
	\end{define}

	\begin{rem}
	\label{rem:final-2-functor}
	It follows directly from the 2-categorical Yoneda lemma that if \(u \colon (\C,E_{\C}) \to (\D,E_{\D})\) is a final 2-functor and \(F\colon \D \to \E\) is a 2-functor then  
	\(F\) admits an \(E_{\D}\)-bicolimit if and only if \(Fu\) admits an \(E_{\C}\)-bicolimit, and when these two equivalent conditions hold, the canonical 1-cell \(\varinjlim Fu \to \varinjlim F\) is an equivalence.
	\end{rem}

\begin{rem}
The definition of final 2-functor given above is equivalent to the one studied in~\cite{GarciaSternThmA} by Abell\'{a}n Garc\'{i}a and Stern, as can be verified by comparing the explicit description of marked cocones in~\cite[Definition 6.1.3]{GarciaSternThmA} with the explicit description of \(E\)-lax cocones in Paragraph~\ref{parag:lax_cone} above.
\end{rem}

A terminal object \(c\) in a category \(C\) can be characterized
by the fact that the functor \(c \colon \Di_0 \to C\) is final.
Indeed, \(c\) is terminal if and only if
the identity functor \(1_C \colon C \to C\) admits \(c\) as its colimit.
Our goal in this section is discuss the analogous situation in the setting of final 2-functors \(\Di_0 \to \C\), for \(\C\) a 2-category. 
For this, we will need to extend the notion of contraction to the framework of marked 2-categories.

\begin{define}
	Let \((\C, M)\) be a marked 2-category and \(\gamma\) a contraction on \(\C\)
	with center \(c\). We say that \(\gamma\) is an \emph{\(M\)-contraction} if the following two conditions hold:
	\begin{enumerate}
	\item
	For every \(x \in \Ob(\C)\) the edge \(\gamma_x\) of \(\C\) is marked.
	\item
	For every marked edge \(f\) of \((\C,M)\), the 2-cell \(\gamma_f\) of \(\C\)
	is invertible.
	\end{enumerate} 
	If such an \(M\)-contraction exists, we will say that \(c\)
	is \emph{\(M\)-bifinal}. 
\end{define}

\begin{rem}\label{r:minimal-marking}
Any contraction \(\gamma\) with center \(c\) is automatically an \(M_{\gamma}\)-contraction, where \(M_{\gamma}\) denotes the collection of edges \(\gamma_a\) for \(a \in \Ob(\C)\) (together with the identities).
\end{rem}

\begin{rem}\label{rem:homotopy-sound}
	Let \((\C, M)\) be a marked 2-category, and \(\gamma,\rho\) two contractions with center
	\(c\). 
	By Paragraph~\ref{parag:uniqueness_contractions}, we have that \(\gamma\) and \(\rho\) are isomorphic as contractions. It then follows that when the collection of 1-cells \(M\) is closed under isomorphism of 1-cells then \(\gamma\) is an \(M\)-contraction if and only if \(\rho\) is an \(M\)-contraction. 
\end{rem}

\begin{prop}
 \label{prop:final_iff_contraction}
	Let \(c \in \C\) be an object and for each \(a \in \Ob(\C)\) fix a morphism \(f_a \colon a \to c\) in such a way that \(f_c = 1_c\). Then \(c\) is bifinal with a contraction \(\gamma\) satisfying \(\gamma_a = f_a\) 	if and only if the inclusion \(\{c\} \hookrightarrow (\C, M_{c})\) is a final \(2\)-functor,
	where \(M_{c} = \{f_a\}_{a \in \Ob(\C)}\). 
\end{prop}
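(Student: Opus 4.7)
The strategy is to combine this statement with Proposition~\ref{prop:contraction-terminality}, which reduces the task to proving: every \(f_a\) is terminal in \(\C(a, c)\) (with \(f_c = 1_c\) given) if and only if the inclusion \(\{c\} \hookrightarrow (\C, M_c)\) is a final 2-functor.

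For the forward direction, one uses the contraction \(\gamma\) (guaranteed by Proposition~\ref{prop:contraction-terminality}) to construct an explicit pseudo-inverse
\[ \Phi \colon \E(Fc, x) \longrightarrow [\C, \E]_{M_c}(F, \Delta x) \]
to the restriction functor \(R\), for every 2-functor \(F \colon \C \to \E\) and every \(x \in \E\). On \(u \colon Fc \to x\), define \(\Phi(u)\) to have components \(\Phi(u)_a = u \comp_0 F(f_a)\) with structural 2-cells obtained by whiskering \(u\) against \(F(\gamma_k)\) for each 1-cell \(k\) of \(\C\). The marking condition is automatic: for \(k = f_b\) in \(M_c\), the relevant 2-cell becomes \(u \comp_0 F(\gamma_{\gamma_b}) = u \comp_0 F(1_{\gamma_b})\), which is an identity. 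The equality \(R \circ \Phi = \mathrm{id}\) is immediate from \(\gamma_c = 1_c\), and the natural isomorphism \(\mathrm{id} \cong \Phi \circ R\) is supplied, component-wise, by the invertible 2-cells \(\alpha_{f_a}\) attached to any \(\alpha \in [\C, \E]_{M_c}(F, \Delta x)\); these are invertible precisely because \(f_a\) is marked. The modification axiom is then forced by the functoriality and compatibility conditions defining both \(\gamma\) and \(\alpha\).

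For the backward direction, instantiate finality at the test pair \(F = 1_\C\) and \(x = c\), yielding an equivalence of categories
\[ R \colon [\C, \C]_{M_c}(1_\C, \Delta c) \xrightarrow{\;\simeq\;} \C(c, c). \]
Essential surjectivity at \(1_c\) supplies some \(\alpha\) with \(\alpha_c \cong 1_c\); a routine rigidification (replacing \(\alpha\) by an isomorphic lax natural transformation, using the invertible 2-cell \(\alpha_c \Rightarrow 1_c\) to adjust the structural 2-cells \(\alpha_k\) with \(c\) as source or target) lets us assume \(\alpha_c = 1_c\) strictly. The marking condition at each \(f_a \in M_c\) then produces an invertible 2-cell \(\alpha_{f_a} \colon \alpha_a \cong f_a\) in \(\C(a, c)\). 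Given any \(g \colon a \to c\), the 2-cell \(\alpha_g\) (suitably composed with \((\alpha_{f_a})^{-1}\)) provides a 2-cell \(g \to f_a\); uniqueness of such a 2-cell is extracted from fully-faithfulness of \(R\), possibly after passing to the auxiliary test 2-functor \(\C(a, -) \colon \C \to \Cat\) with \(x = \C(a, c)\), in order to localise the argument in the single hom-category \(\C(a, c)\). This shows \(f_a\) is terminal in \(\C(a, c)\), and invoking the easy direction of Proposition~\ref{prop:contraction-terminality} produces the desired contraction with \(\gamma_a = f_a\).

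The main obstacle lies in the backward direction: one must descend from \emph{global} information (an equivalence between a category of marked lax natural transformations and \(\C(c, c)\)) to \emph{local} information (terminality inside each individual hom-category \(\C(a, c)\)). The rigidification making \(\alpha_c = 1_c\) strictly, and the careful transfer of uniqueness of modifications into uniqueness of 2-cells at each \(a\)—likely through corepresentable test 2-functors—are the delicate points. The forward direction, by contrast, is essentially a direct verification once \(\Phi\) is defined.
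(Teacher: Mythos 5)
The paper disposes of this statement in one line, as the special case \(M = M_c\) of the implication \((ii) \Leftrightarrow (iii)\) of Proposition~\ref{prop:E-final-iff-E-contraction}; what you have written is essentially a direct re-derivation of that more general argument, which is a legitimate route. Your forward direction is correct and coincides with the paper's proof of \((ii) \Rightarrow (iii)\): the functor \(\Phi\) is exactly the paper's \(\gamma^{\ast}\), the identity \(R \circ \Phi = \mathrm{id}\) follows from \(\gamma_c = 1_c\), and the isomorphism \(\mathrm{id} \cong \Phi \circ R\) is witnessed by the structural 2-cells \(\alpha_{f_a}\), invertible because \(f_a\) is marked. The observation that the marking condition on \(\Phi(u)\) is automatic (since \(\gamma_{\gamma_a}\) is an identity) is also correct and is Remark~\ref{r:minimal-marking} in disguise.

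The backward direction, however, has a genuine gap at exactly the point you flag as delicate: the uniqueness of a 2-cell \(g \to f_a\) in \(\C(a,c)\). Fully-faithfulness of \(R\) is a statement about \emph{modifications} --- globally coherent families of 2-cells indexed by all objects of \(\C\) --- and given two competing 2-cells \(\beta_1, \beta_2 \colon g \to f_a\) in a single hom-category there is no evident way to promote them to two modifications with equal image under \(R\); the proposed detour through \(\C(a,-) \colon \C \to \Cat\) is not carried out and does not obviously repair this. The fix is local and is already in your hands: having rigidified so that \(\alpha_c = 1_c\), the compatibility of the lax cocone \(\alpha\) with 2-cells of \(\C\) gives, for \emph{any} 2-cell \(\beta \colon g \to \alpha_a\), the relation \(\alpha_g = \alpha_{\alpha_a} \comp_1 \beta\). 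Since invertibility of the structural 2-cells \(\alpha_{(-)}\) is preserved under isomorphism of 1-cells and \(\alpha_{f_a}\) is invertible with \(f_a \cong \alpha_a\), the 2-cell \(\alpha_{\alpha_a}\) is invertible, and the relation \(\alpha_{\alpha_a} = \alpha_{\alpha_a} \comp_1 \alpha_{\alpha_a}\) (Remark~\ref{parag:apriori-differ}) then forces it to be the identity; hence \(\beta = \alpha_g\) is unique. In other words, you should recognize that \(\alpha\) \emph{is} a contraction and invoke Proposition~\ref{prop:contraction-terminality} in full --- this is precisely how the paper's proof of \((iii) \Rightarrow (i)\) proceeds --- rather than reserving that proposition for its ``easy direction'' after terminality has supposedly been established by other means.
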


The previous proposition is a particular case of a more general result
characterizing the final 2-functors of the form \(\{c\} \to (\C, M)\),
for a given marking \(M\) of \(\C\) containing \(M_c\).

\begin{define}
\label{def:E-final}
 	Let \((\C, M)\) be a marked 2-category and \(c\) an object of \(\C\).
 	We say that \(c\) is \emph{pre-final} if
 	\begin{enumerate}
 		\item\label{item:final1} for every \(a\) in \(\Ob(\C)\) the category \(\C(a, c)\) has a terminal object.
 		\item\label{item:final2} the identity 1-cell of \(c\) is terminal in \(\C(c, c)\). 
	\end{enumerate}
 	Moreover, we say that the pre-final object \(c\) is \emph{\(M\)-final} if 	  		
 	\begin{enumerate}[resume]
 		\item\label{item:final2a} for every \(a\) in \(\Ob(\C)\) there exists a marked edge \(a \to c\) in \((\C,M)\); 
 		\item\label{item:final3} for every marked edge \(f \colon a \to b\) in \(M\), the induced functor \(f^\ast\colon \C(b, c) \to \C(a, c)\) preserves terminal objects.
 	\end{enumerate}
 \end{define}

\begin{rem}\label{rem:terminal-is-marked}
If \(c\) is \(M\)-final in \((\C,M)\) then every marked edge \(a \to c\) with target \(c\) is terminal in \(\C(a,c)\).  
To see this, note that since \(1_c\) is terminal in \(\C(c,c)\) and pre-composition with marked edges preserves terminal objects it follows that every marked edges \(a \to c\) is terminal in \(\C(a,c)\). On the other hand, such a marked terminal edge exists in \(\C(a,c)\) by condition~\eqref{item:final2a}, and hence any other terminal edge in \(\C(a,c)\) is isomorphic to it. In particular, if the collection \(M\) is closed under isomorphisms of 1-cells then the marked edges in \(\C(a,c)\) are exactly the terminal ones. 
\end{rem}

 \begin{rem}
 	In~\cite[\S3.3]{DescotteDubucSzyldSigmaLimits} Descotte, Dubuc and Szyld work out 
	a theory of \(M\)-final 2-functor in the case where the source is an \(M\)-filtered 2-category.
 	For a marked 2-functor \(\{c\} \to (\C, M)\) the definition of loc.~cit.~is equivalent to \(c\) being \(M\)-final in the sense of Definition~\ref{def:E-final} above, at least when the collection of marked edges \(M\) is closed under composition of 1-cells (as it is assumed in loc.~cit.). Indeed, their definition adapted to the case where
 	the source is \(\Di_0\) states that the following conditions are satisfied:
 	\begin{enumerate}[label=C\arabic*, start=0]
 		\item\label{item:finalDDS-0} for every \(a\) in \(\Ob(\C)\) there exists a 1-cell
 		\(a \to c\) in $M$;
 		\item\label{item:finalDDS-1} for every object $a$ of $\C$ and every pair of parallel 1-cells
 		\(f, g \colon a \to c\) with \(g\) in \(M\), there is a 2-cell \(\alpha \colon f \to g\);
 		\item\label{item:finalDDS-2} for every object \(a\) of \(\C\), every pair of parallel 1-cells
 		\(f, g\colon a \to c\) with \(g\) in \(M\), the 2-cell \(\alpha \colon f \to g\) is unique.
 	\end{enumerate}
 	These conditions all hold when \(c\) is \(M\)-final by Remark~\ref{rem:terminal-is-marked}. On the other hand, the combination of the above conditions is clearly equivalent to the statement that for each \(a \in \Ob(\C)\) there exist marked edges \(a \to c\), and that these are all terminal in their respective hom categories. In particular, conditions~\eqref{item:final1} and~\eqref{item:final2a} hold in this case. In addition, since all identities are marked~\eqref{item:final2} is implied, and when the collection of marked edges is closed under composition we also get that~\eqref{item:final3} holds. 
 \end{rem}

The following result shows that the notions of finality for an object
that we have introduced so far coincide.

\begin{prop}
	\label{prop:E-final-iff-E-contraction}
	Let \((\C, M)\) be a marked 2-category, \(c\) an object of \(\C\), and \(f_a\colon a \to c\) a choice of a marked 1-cell for every \(a \in \Ob(\C)\) such that \(f_c = 1_c\). 
	Then the following statements are equivalent:
	\begin{enumerate}[label=(\roman*)]
		\item \(c\) is \(M\)-final; 
		\item \(c\) is \(M\)-bifinal with contraction \(\gamma\) such that \(\gamma_a = f_a\);
		\item the inclusion \(\{c\} \hookrightarrow (\C, M)\) is final. 	\end{enumerate}
\end{prop}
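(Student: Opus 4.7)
The plan is to establish the three equivalences via the cyclic chain (i) $\Rightarrow$ (ii) $\Rightarrow$ (iii) $\Rightarrow$ (i), using Proposition~\ref{prop:contraction-terminality} and Remark~\ref{rem:terminal-is-marked} as the main bridges between contractions and terminality.

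For (i) $\Rightarrow$ (ii), I would invoke Remark~\ref{rem:terminal-is-marked} to deduce that under $M$-finality each marked edge $f_a \colon a \to c$ is terminal in $\C(a,c)$, so that Proposition~\ref{prop:contraction-terminality} produces a contraction $\gamma$ with $\gamma_a = f_a$ for every $a$. To see that $\gamma$ is an $M$-contraction, the marking of each $\gamma_a = f_a$ is given, and for any marked $h \colon a \to b$ the structure 2-cell $\gamma_h \colon \gamma_b \comp_0 h \to \gamma_a$ connects two terminal objects of $\C(a,c)$---the source $h^\ast(f_b)$ being terminal thanks to condition~(4) of $M$-finality---hence is invertible.

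For (ii) $\Rightarrow$ (iii), I would construct an explicit quasi-inverse to the restriction functor $r \colon [\C,\E]_M(F, \Delta x) \to \E(Fc, x)$ for any $F \colon \C \to \E$ and $x \in \E$. Define $S(h) := \Delta h \comp_0 F\gamma$, where $F\gamma \colon F \to \Delta Fc$ is obtained by postcomposing $\gamma$ with $F$. Since $F$ preserves invertible 2-cells, $F\gamma$ inherits $M$-laxness from $\gamma$, and the strict equality $\gamma_c = 1_c$ forces $r \circ S$ to be the identity on $\E(Fc, x)$. For the opposite composite $S \circ r$, I would use that for any $\beta \in [\C,\E]_M(F, \Delta x)$ the structure 2-cells $\beta_{f_a}$ at the marked edges $f_a = \gamma_a$ are invertible, giving a modification-isomorphism $\beta \cong \Delta \beta_c \comp_0 F\gamma$ whose coherence follows from that of $\beta$.

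For (iii) $\Rightarrow$ (i), condition~(3) is the standing hypothesis. Applying finality to $F = 1_\C$ and $x = c$ yields an equivalence $r \colon [\C,\C]_M(1_\C, \Delta c) \simeq \C(c,c)$, and taking a preimage of $1_c$---adjusted via the canonical iso $\gamma_c \simeq 1_c$ so that $\gamma_c = 1_c$ strictly---produces a lax $M$-natural transformation $\gamma$. Invertibility of $\gamma_{f_a}$ at the marked edges $f_a$ identifies $\gamma_a$ with $f_a$ up to isomorphism; feeding $\gamma$ back into Proposition~\ref{prop:contraction-terminality} forces each $f_a$ (and $f_c = 1_c$) to be terminal in the appropriate hom-category, delivering (1) and (2). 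Condition~(4) is analogous: for a marked $h \colon a \to b$, invertibility of $\gamma_h$ gives $h^\ast(f_b) \cong f_a$, so $h^\ast$ preserves terminal objects. The main obstacle sits here, in the passage from an abstract equivalence of hom-categories to a bona fide contraction whose components are genuinely terminal; the adjustment step and the identification $\gamma_a \cong f_a$ require combining the essential uniqueness of contractions from Paragraph~\ref{parag:uniqueness_contractions} with a careful exploitation of the marking condition together with the faithfulness of $r$.
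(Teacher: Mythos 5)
Your chain (i)$\Rightarrow$(ii)$\Rightarrow$(iii)$\Rightarrow$(i) and your arguments for the first two implications are essentially the paper's: (i)$\Rightarrow$(ii) via Remark~\ref{rem:terminal-is-marked} and Proposition~\ref{prop:contraction-terminality}, with $\gamma_h$ invertible because it is a 2-cell between terminal objects of $\C(a,c)$; and (ii)$\Rightarrow$(iii) via the explicit section $h \mapsto \Delta h \comp_0 F\gamma$, the strict identity $r\circ S = \mathrm{id}$ coming from $\gamma_c = 1_c$, and the invertible modification $\beta \cong S(\beta_c)$ assembled from the 2-cells $\beta_{\gamma_a}$, which are invertible because the $\gamma_a$ are marked. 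Those parts are correct as written.

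The gap is in (iii)$\Rightarrow$(i), precisely where you flag ``the main obstacle,'' and the tools you propose there do not close it. Before you can feed $\gamma$ into Proposition~\ref{prop:contraction-terminality} you must know that the $M$-lax transformation $\gamma \in [\C,\C]_M(1_\C,\Delta c)$ with $\gamma_c = 1_c$ is actually a \emph{contraction}, i.e.\ that $\gamma_{\gamma_a}$ is the identity of $\gamma_a$; neither the essential uniqueness of contractions from Paragraph~\ref{parag:uniqueness_contractions} (which presupposes that contractions already exist) nor the faithfulness of $r$ delivers this. The argument that does work is elementary: since $\gamma_c = 1_c$, the compatibility of $\gamma$ with 2-cells of $\C$ reads $\gamma_g = \gamma_{g'} \comp_1 \theta$ for any 2-cell $\theta \colon g \to g'$ in $\C(a,c)$, so the property ``$\gamma_g$ is invertible'' is closed under isomorphism in $\C(a,c)$. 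As $\gamma_{f_a}$ is invertible ($f_a$ is marked and $\gamma$ is $M$-lax) and exhibits $f_a \cong \gamma_a$, it follows that $\gamma_{\gamma_a}$ is invertible, and then the relation $\gamma_{\gamma_a} = \gamma_{\gamma_a}\comp_1\gamma_{\gamma_a}$ of Remark~\ref{parag:apriori-differ} forces it to be the identity. With that supplied, the remainder of your outline for (iii)$\Rightarrow$(i) --- terminality of the $f_a$ and of $1_c$ from Proposition~\ref{prop:contraction-terminality}, and condition~\eqref{item:final3} from invertibility of $\gamma_h$ for marked $h$ --- goes through exactly as in the paper.
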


\begin{warning}
In Proposition~\ref{prop:E-final-iff-E-contraction} we implicitly assume in advance that a marked edge \(a \to c\) exists for every \(a \in \Ob(\C)\). This assumption automatically holds if \(c\) is \(M\)-final or \(M\)-bifinal, but not necessarily if one only assumes that the inclusion \(\{c\} \hookrightarrow (\C, M)\) is final. In particular, while Proposition~\ref{prop:E-final-iff-E-contraction} shows that for a given object \(c \in \Ob(\C)\) the properties of being \(M\)-final and \(M\)-bifinal are equivalent, these conditions are only shown to imply the finality of the inclusion \(\{c\} \hookrightarrow (\C, M)\), and to be implied by it if \((\C,M)\) possesses sufficiently many marked edges. 
\end{warning}

\begin{proof}[Proof of Proposition~\ref{prop:E-final-iff-E-contraction}]
 	\((i) \Rightarrow (ii)\).
 	Assume that \(c\) is \(M\)-final. By Remark~\ref{rem:terminal-is-marked} we have that \(f_a\) is terminal in \(\C(a,c)\) for every \(a \in \Ob(\C)\).  
 	By Proposition~\ref{prop:contraction-terminality} there exists a contraction \(\gamma\) such that \(\gamma_a = f_a\) for every \(a \in \Ob(\C)\). 
 	To see that \(\gamma\) is an \(M\)-contraction we now note that \(\gamma_a\) is marked by construction, 
	and for \(g \colon a \to b\) a marked 1-cell, condition~\eqref{item:final3} entails
	that \(\gamma_g \colon \gamma_b \comp_0 g \to \gamma_a\) is a 2-cell between two terminal 1-cells, and is hence invertible.
	This shows that \(\gamma\) is an \(M\)-contraction.

	\((ii) \Rightarrow (iii)\).
	Let \(c\) be an \(M\)-bifinal object and \(\gamma\) an \(M\)-contraction
	with center \(c\) such that \(\gamma_a = f_a\) for every \(a \in \Ob(\C)\). 
	We need to show that for any 2-functor \(F \colon \C \to \D\), and any \(d \in \D\), the evaluation at \(c\) map
	\[
		\bullet_c \colon [\C, \D]_{M}(F, \Delta d) \to \D(Fc, d),
	\]
	is an equivalence of categories. We construct an explicit inverse 
	\[
		\gamma^{\ast} \colon \D(Fc, d) \to [\C, \D]_{M}(F, \Delta d)
	\]
	to this functor by means of post-composition with \(\gamma\).
	More precisely, given \(f \colon Fc \to d\), we can consider the family of \(1\)-cells
	\[
	  \bigl(fF(\gamma_x)\colon Fx \to d\bigr)_{x \in \Ob(\C)},
	\]
	and that of \(2\)-cells
	\[
		\bigl(fF(\gamma_h) \colon fF(\gamma_x) \to f F(\gamma_y) F(h)\bigr)_{h \in \C( x ,y)}.
	\]
	It is clear that these data all organize into an \(M\)-lax natural transformation
	\(\gamma^{\ast}(f)\) from \(F\) to \(\Delta d\) and one can similarly
	obtain a modification \(\gamma^{\ast}(\alpha)\colon \gamma^{\ast}(f) \to \gamma^{\ast}(g)\)
	from a \(2\)-cell \(\alpha \colon f \to g\). We now claim that \(\gamma^{\ast}\) is inverse to \(\bullet_c\).
	Indeed, on the one hand, the composition \(\bullet_c \circ \gamma^\ast\)
	is the identity on \(\D(Fc, d)\). In the other direction, 
	we need to show that the \(M\)-lax transformations \(\rho\) and \(\gamma^\ast(\rho_c)\)
	are isomorphic objects in \([\C, \D]_{M}(F, \Delta d)\),
	naturally in \(\rho\).
	For \(x\) an object of \(\C\), we get \(\gamma^\ast(\rho_c)_x = \rho_c F\gamma_x\).
	This gives us a triangle
	\[
		\begin{tikzcd}[column sep=small]
			Fx \ar[rr, "F\gamma_x"] \ar[rd, "\rho_x"{swap, name=t}]	&	& Fc \ar[ld, "\rho_c"]	\\
																	& d &
				\ar[Rightarrow, from=1-3, to=t, shorten <= 4mm, shorten >=4mm, "\beta_x"{swap}]
		\end{tikzcd}
	\]
	in \(\D\), where the 2-cell \(\beta_x\) is \(\rho_{\gamma_x}\).
	As \(\gamma_x\) is in \(M\) 
	the 2-cell \(\rho_{\gamma_x}\) is 
	invertible by assumption. We need to check that the collection of
	the 2-cells \(\beta_x\), for all objects \(x\) of \(\C\), organizes
	into an invertible modification between \(\rho\) and \(\gamma^\ast(\rho_c)\).
	For this we must check that given any 1-cell \(f \colon x \to y\) of \(\C\),
	the equation
	\[
		\rho_f \comp_1 (\rho_{\gamma_y} \comp_0 Ff) =
		\rho_{\gamma_x} \comp_1 (\rho_c \comp_0 F\gamma_f).
	\]
	holds. This is precisely the relation satisfied by the action of \(\rho\) on
	the 2-cell
	\[
		\begin{tikzcd}[column sep=small]
			x \ar[rr, "f"] \ar[rd, "\gamma_x"{swap, name=t}]	&	& y \ar[ld, "\gamma_y"]	\\
																& c &
				\ar[Rightarrow, from=1-3, to=t, shorten <= 3.5mm, shorten >=3.5mm, "\gamma_f"{swap}]
		\end{tikzcd}
	\]
	of \(\C\). Hence, the functor \(\bullet_c\) is an equivalence.

	\((iii) \Rightarrow (i)\).
	Suppose that \(c \colon \Di_0 \to (\C, M)\) is a final 2-functor. The object \(c\)
	with the cocone \(1_c\) is clearly the bicolimit of the 2-functor \(c\).
	In light of Remark~\ref{rem:final-2-functor} it then follows that the identity 2-functor \(1_{\C} \colon (\C, M) \to \C\) admits an \(M\)-bicolimit cocone \(\gamma\) 
	such that \(\gamma_c = 1_c\).
Now by assumption every object \(a \in \Ob(\C)\) admits a marked edge \(f_a\colon a \to c\). Since \(\gamma\) is an \(M\)-lax cocone the 2-cell \(\gamma_f\colon f_a \Rightarrow \gamma_a\)
is invertible, and hence \(f_a\) and \(\gamma_a\) are isomorphic 1-cells in \(\C(a,c)\). We now observe that for a general morphism \(g\colon a \to c\) the condition that \(\gamma_g\) is invertible is closed under isomorphism in \(\C(a,c)\). It then follows that \(\gamma_{\gamma_a}\) is invertible as well, so that \(\gamma\) constitutes a contraction with center \(c\). By Proposition~\ref{prop:contraction-terminality} we now get that each \(\gamma_a\) is terminal in \(\C(a,c)\), and since \(\gamma_c = \Id_c\) this means that that \(c\) is pre-final. In addition, Condition~\eqref{item:final2a} holds by assumption, and since \(\gamma\) is an \(M\)-lax cocone we have that for every marked edge \(g\colon a \to b\) the 2-cell \(\gamma_g\) is invertible, and so \(\gamma_a \cong g \comp_0 \gamma_b\). We thus have that pre-composition \(\C(b,c) \to \C(a,c)\) with any marked edge \(g\colon a \to b\) preserves terminal objects, which shows~\eqref{item:final3}.
\end{proof}

\begin{proof}[Proof of Proposition~\ref{prop:final_iff_contraction}]
	This is a particular case of the implication \((ii) \Leftrightarrow (iii)\) of Proposition~\ref{prop:E-final-iff-E-contraction} applied to \(M_{c}\). 
\end{proof}

\section{Slice fibrations}
In this section we further study the slice fibrations \(p\colon \mCF \to \C\) associated to a marked 2-category \(\mJ = (\J, E)\) and a \(2\)-functor \(F \colon \J \to \C\) as in Paragraph~\ref{parag:marked-slice}. We begin in \S\ref{sec:car-edges} by giving an explicit description of the \(p\)-cartesian edges in \(\mCF\), and show that they coincide with the marked edges with respect to the marking of Paragraph~\ref{parag:marked_join}. In \S\ref{sec:fibrations} we then focus on the particular case where \(\J=\Di_0\), so that \(\mCF = \trbis{\C}{F(\ast)} \) is a representable fibration, and show that in this case the object \(1_{F(\ast)}\) is the center of a contraction relative to the collection of \(p\)-cartesian edges. Finally, in \S\ref{sec:special_cones} we construct a modified 2-category of cones which projects to \(\mCF\), and show that this projection is a biequivalence 
if and only if \(F\) admits an \(E\)-bilimit.

\subsection{Cartesian edges in slice fibrations}
\label{sec:car-edges}
	\begin{parag}
		Let us fix a marked 2-category \(\mJ = (\J, E)\), a \(2\)-functor \(F \colon \J \to \C\),
		and consider the
		associated projection \(2\)-functor \(p \colon \mCF \to \C\).
		We wish to describe the \emph{\(p\)-cartesian morphisms} of \(\mCF\).
		Consider an object \((x, \lambda) \colon \final \ast \mJ \to \C\) of \(\mCF\),
		that we can represent by
		\[
			\begin{tikzcd}[column sep=tiny]
									& x \ar[ld, "\lambda_i"'] \ar[rd, "\lambda_j"{name=t}]	&	\\
				Fi \ar[rr, "Fh"']	&														& Fj
				\ar[Rightarrow, from=2-1, to=t, shorten <= 2mm, shorten >= 3mm, "\lambda_h"{swap}]
			\end{tikzcd}
			\quad  \quad
			\text{for $h \colon i \to j$ in $\J$,}
		\]
		with \(\lambda_h\) invertible whenever \(h\) is in \(E\).
		We claim that every \(1\)-cell \((f, \sigma) \colon \Di_1 \ast \mJ \to \C\) from \((z, \alpha)\) to \((x, \lambda)\):
		\[
			\begin{tikzcd}[column sep=tiny]
				z \ar[rr, "f"] \ar[rd, "\alpha_j"'{name=t}]	&		& x \ar[ld, "\lambda_j"] \\
															&	F_j &
				\ar[Rightarrow, from=1-3, to=t, shorten <=3mm, shorten >=3mm, "\sigma_j"{swap}, "\cong"]
			\end{tikzcd}
		\]
		such that the \(2\)-cell \(\sigma_j\) of \(\C\) is invertible for every object \(j\) of \(\J\),
		is a \(p\)-cartesian lift of \(f \colon z \to x\).
		In the notations of Paragraph~\ref{parag:marked_join}, we are claiming that \((f, \sigma)\) is a \(p\)-cartesian
		lift whenever it is represented by a 2-functor \(\Di_1^\sharp \ast \mJ \to \C\).
		One possible such choice is given by the precomposition of \((x, \lambda)\) with \(f\),
		\ie where \(\sigma_j\) is the identity of \(\lambda_j f\) for all \(j\) in \(\Ob(\J)\).
		
		Indeed, let \((f', \mu) \colon (z', \alpha') \to (x, \lambda)\) be a \(1\)-cell of \(\mCF\)
		and \(g \colon z' \to z\) a \(1\)-cell of \(\C\) such that \(fg = f'\).
		We wish to find an edge \((g, \bar{\mu}) \colon (z', \alpha')\to (z, \alpha)\)
		such that
		\begin{equation}
		\label{eq:cartesian_edge_slice_CF}
			(f, \sigma) \comp_0 (g, \bar{\mu}) = (f', \mu).
		\end{equation}
		Now \((f, \sigma) \comp_0 (g, \bar{\mu}) = (fg, \bar{\mu} \bullet \sigma)\),
		where we set \((\bar{\mu} \bullet \sigma)_j = \bar{\mu}_j \comp_1 (\sigma_j \comp_0 g) \)
		so that necessarily \(f \comp_0 g = f'\) and \(\mu = \bar{\mu} \bullet \sigma\).
		As \(p(g, \bar{\mu}) = g\), we must have \((g, \bar{\mu}) = (g, \mu \bullet \sigma^{-1})\),
		where we set
		\[
			(\mu \bullet \sigma^{-1})_j = \mu_j \comp_1 (\sigma^{-1}_j \comp_0 g)
		\]
		for all objects \(j\) in \(\J\). Said otherwise, if such a 1-cell \((g, \bar{\mu})\)
		of \(\mCF\) exists, then for any object \(j\) of \(\J\) the 2-cell
		\(\bar{\mu}\) of \(\C\) must be equal to \(\mu_j \comp_1 (\sigma^{-1}_j \comp_0 g)\).
		This shows that if \((g, \bar{\mu})\) satisfying \eqref{eq:cartesian_edge_slice_CF} 
		exists, then it is unique.
		As for its existence, it is easy to check that the following diagram
		\begin{nscenter}
		 \begin{tikzpicture}
			\square{
				/square/label/.cd,
					0={$z'$}, 1={$z$}, 2={$Fi$}, 3={$Fj$},
					01={$g$}, 12={$\alpha_i$}, 23={$Fh$}, 02={$\alpha'_i$},
					13={$\alpha_j$}, 03={$\alpha'_j$},
					012={$\bar{\mu}_i$}, 023={$\alpha'_h$},
					013={$\bar{\mu}_j$}, 123={$\alpha_h$},
				/square/arrowstyle/.cd,
					012={Leftarrow}, 023={Leftarrow},
					013={Leftarrow}, 123={Leftarrow},
					0123 = {equal}
			}
		 \end{tikzpicture}
		\end{nscenter}
		in \(\C\) commutes for every edge \(h \colon i \to j\) in \(\J\), so that \(\bar{\mu} \colon \alpha' \to \alpha\) defines
		a \(1\)-cell in \(\mCF\).
		This concludes the existence and uniqueness property for \(1\)-cells.

		Let \(\Xi \colon (f', \mu) \to (f'', \mu')\) be a \(2\)-cell of \(\mCF\).
		In particular, for any object \(i\) of \(\J\) we have a commutative diagram
		\[
				\begin{tikzcd}[column sep=small, row sep=3.2em]
					{z'} \ar[rr, bend left, "{f'}", ""{swap, name=s}]
						\ar[rr, bend right, ""{name=t}, "{f''}"{above right = 0pt and 3pt}]
						\ar[rd, "{\alpha'_i}"{swap, name=a}]	&		&
									x \ar[ld, "\lambda_i"{pos=0.4}, ""{swap, pos=0.4, name=b}]	\\
																& Fi 	&
					\ar[Rightarrow, from=s, to=t, "\Xi"{swap}]
					\ar[Rightarrow, from=b, to=a, shorten <=4mm, shorten >=4mm, "{\mu'_i}"]
				\end{tikzcd}
				=
				\begin{tikzcd}[column sep=small, row sep=3.2em]
					{z'} \ar[rr, bend left, "{f'}"] \ar[rd, "{\alpha'_i}"{swap, name=a}]
						&	& x \ar[ld, "\lambda_i"{pos=0.4}, ""{swap, pos=0.1, name=b}]	\\
						& Fi &
					\ar[Rightarrow, from=b, to=a, shorten <=4mm, shorten >=4mm, "\mu_i"']
				\end{tikzcd}
			\]
		in \(\C\). Fix \(\tau \colon g \to g'\) a \(2\)-cell of \(\C\)
		such that \(p(\Xi) = \Xi = f \comp_0 p(\tau)\).
		We know from the argument above that there are unique lifts
		\[
			(g, \bar{\mu}) \colon (z', \alpha') \to (z, \alpha)
			\quad , \quad
			(g', \ovl{\mu'}) \colon (z', \alpha') \to (z, \alpha)
		\]
		such that \((f, \sigma) \comp_0 (g, \bar\mu) = (f', \mu)\)
		and \((f, \sigma) \comp_0 (g', \overline{\mu'}) = (f'', \mu')\).
		We wish to find a unique \(2\)-cell \(\bar{\Xi} \colon (g, \bar\mu) \to (g', \overline{\mu'})\)
		of \(\mCF\)
		such that \(p(\bar{\Xi}) = \tau\) and
		\[
			1_{(f, \sigma)} \comp_0 \tau = \Xi.
		\]
		in \(\mCF\). The condition \(p(\bar{\Xi}) = \tau\) forces the uniqueness. Since \(1_{(f, \sigma)} \comp_0 \tau = 1_f \comp_0 \tau = \Xi\),
		the existence part follows once we show that~\(\tau\) is a \(2\)-cell
		from \((g, \bar\mu)\) to \((g', \overline{\mu'})\). For every \(i\) in \(\Ob(\J)\) we have
		\[
		 \begin{split}
		 	\ovl{\mu_i'} \comp_1 (\alpha_i \comp_0 \tau)
		 		& = \mu_i' \comp_1 (\sigma^{-1}_i \comp_0 g) \comp_1 (\alpha_i \comp_0 \tau)				\\
		 		& = \mu_i' \comp_1 (\lambda_i \comp_0 f \comp_0 \tau) \comp_1 (\sigma^{-1}_i \comp_0 g')  \\
		 		& = \mu_i' \comp_1 (\lambda_i \comp_0 \Xi) \comp_1  (\sigma^{-1}_i \comp_0 g')  		\\
		 		& = \mu_i \comp_1 (\sigma^{-1}_i \comp_0 g') = \bar{\mu}_i,
		 \end{split}
		\]
		where the second equality is given by the interchange law, the third and fourth
		by the assumptions and the first and the last one are the definitions of
		\(\ovl{\mu'}\) and \(\bar{\mu}\), respectively.

		We have thus shown that the 1-cell \((f, \sigma)\) satisfies the properties of \(p\)-cartesian
		1-cell detailed in paragraph~\ref{parag:cartesian_edge}.
	\end{parag}

	\begin{prop}\label{prop:cartesian_edges}
		The \(p\)-cartesian edges \((f, \sigma) \colon (z, \beta) \to (x, \alpha)\) of \(\mCF\) are all of the form
		\[
		 \begin{tikzcd}[column sep=tiny]
			 z \ar[rr, "f"] \ar[rd, "{\beta_i}"{swap, name=a}] 	&	& x \ar[ld, "\alpha_i"] \\
													& Fi &
			 \ar[Rightarrow, from=1-3, to=a, shorten <=3mm, shorten >=3mm, "\sigma_i"{swap}, "\simeq"]
		 \end{tikzcd}
		\]
		with \(\sigma_i\) an invertible \(2\)-cell of \(\C\) for all \(i\) in \(\Ob(\J)\). In particular, the \(p\)-cartesian edges of \(\mCF\)
		are precisely its marked edges, as detailed in Paragraph~\ref{parag:marked_join}.
	\end{prop}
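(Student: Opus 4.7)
The preceding paragraph already establishes the implication that every 1-cell $(f,\sigma)\colon (z,\beta)\to (x,\alpha)$ whose components $\sigma_i$ are all invertible is $p$-cartesian, so the plan is to deduce the converse by comparing an abstract $p$-cartesian edge with a canonical one obtained by precomposition. Given a $p$-cartesian $(f,\sigma)\colon (z,\beta)\to (x,\alpha)$, I would introduce the cone $\alpha\circ f$ on $F$ with vertex $z$ whose $i$-th leg is $\alpha_i f$ (with the evident transition 2-cells), together with the identity-filled 1-cell $e := (f,\mathrm{id})\colon (z,\alpha\circ f)\to (x,\alpha)$; by the implication just recalled, $e$ is itself $p$-cartesian.

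Next I would apply the universal property of Paragraph~\ref{parag:cartesian_edge} twice. From the $p$-cartesianness of $(f,\sigma)$ and the factorization $p(e) = f = f\comp_0 1_z$, one extracts a unique $h\colon (z,\alpha\circ f)\to (z,\beta)$ with $p(h) = 1_z$ and $(f,\sigma)\comp_0 h = e$; symmetrically, the $p$-cartesianness of $e$ produces $h'\colon (z,\beta)\to (z,\alpha\circ f)$ over $1_z$ with $e\comp_0 h' = (f,\sigma)$. The composites $h\comp_0 h'$ and $h'\comp_0 h$ both project to $1_z$ and are killed by their respective cartesian 1-cells, so by the uniqueness clause of the cartesian property they must equal the identities. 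Hence $h$ is an isomorphism of $\mCF$ with inverse $h'$.

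Writing $h = (1_z,\mu)$ and $h' = (1_z,\nu)$ and translating the four defining identities via the explicit composition formula $(\bar\mu\bullet\sigma)_j = \bar\mu_j \comp_1 (\sigma_j\comp_0 g)$ from the preceding paragraph yields the component-wise relations $\mu_i\comp_1 \sigma_i = \mathrm{id}_{\alpha_i f}$ and $\sigma_i\comp_1\mu_i = \mathrm{id}_{\beta_i}$, showing that each $\sigma_i$ is invertible with inverse $\mu_i$. The ``in particular'' clause of the proposition is then immediate from Paragraph~\ref{parag:marked_join}, since the marking on $\C^{/F}$ induced by the right adjoint to the marked join consists precisely of those 1-cells $(f,\sigma)$ whose filling 2-cell is invertible at every object $i\in\Ob(\J)$, matching the characterization of $p$-cartesian edges just established. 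The only anticipated subtlety is bookkeeping — keeping straight the direction of the filling 2-cells in $\mCF$ and the order of factors in $\comp_0$ — but once the conventions are fixed the argument is entirely formal.
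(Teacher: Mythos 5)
Your proof is correct and follows essentially the same route as the paper: both compare the given cartesian edge with the identity-filled edge $(f,\mathrm{id})\colon (z,\alpha\cdot f)\to(x,\alpha)$ and extract a vertical comparison 1-cell over $1_z$ whose components invert the $\sigma_i$. The only difference is that the paper concludes the comparison 1-cell is an isomorphism by citing Buckley's Proposition~2.1.4(2), whereas you re-derive this directly from the two-sided uniqueness clause of the cartesian property, which is a harmless (and self-contained) substitute.
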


	\begin{proof}
		We have shown in the paragraphs above that the edges of \(\mCF\)
		of this form are indeed \(p\)-cartesian.
		If \((g, \mu) \colon (z, \beta) \to (x, \alpha)\) is
		a \(p\)-cartesian edge, then there exists a unique \(1\)-cell
		\((1_z, \mu') \colon (z, \alpha \cdot g) \to (z, \beta)\)
		such that
		\[
			(g, \mu) \comp_0 (1_z, \mu') = (g, 1_{\alpha \cdot g}),
		\]
		which is equivalent to the condition \(\mu_i' \comp_0 \mu_i = 1_{\alpha_i\comp_0 g}\)
		for all \(i\) in \(\Ob(\J)\).
		But \(\mu'_i\) is an invertible \(2\)-cell of \(\C\),
		since by~\cite[Proposition~2.1.4(2)]{BuckleyFibred} the \(1\)-cell
		\((1_z, \mu')\) is an isomorphism of \(\CF\) and so in \(\mCF\).
		Hence, \(\mu_i\) is an invertible \(2\)-cell of \(\C\) for all objects \(i\) of \(\J\). 
	\end{proof}

	\begin{parag}
	 \label{parag:CFcart}
		Proposition~\ref{prop:cartesian_edges} yields an explicit description of the sub-2-category \(\mCFcart\) of \(\mCF\) spanned by all objects and the \(p\)-cartesian edges between them (see Definition~\ref{def:sub-car}).
		Indeed, the \(1\)-cells of \(\mCFcart\) are given by
		\[
		 \begin{tikzcd}[column sep=tiny]
			 z \ar[rr, "{g}"] \ar[rd, "{\alpha}"{swap, name=a}] 	&	& z' \ar[ld, "\alpha'"{pos=0.35}] \\
													& Fi &
			 \ar[Rightarrow, from=1-3, to=a, shorten <=3mm, shorten >=3mm, "\sigma_i"{swap}, "\simeq"]
		 \end{tikzcd}
		\]
		with \(\sigma_i\) an invertible \(2\)-cell of \(\C\) for every \(i\) in \(\Ob(\J)\).
	\end{parag}

\subsection{Representable fibrations}
\label{sec:fibrations}

In this section we focus on the particular case of slice fibrations where \(J=\Di_0\), that is, a 2-functor \(F\colon J \to \C\) simply corresponds to an object \(x \in \C\). In this case \(\trbis{\C}{x}\) is equipped with a designated object \(1_x\), whose finality properties we wish to understand. 
We start by recalling the classical \(1\)-categorical scenario.

\smallskip

\textbf{The \texorpdfstring{$1$}{1}-categorical case.}
Let \(p \colon E \to B\) be a cartesian fibration of 1-categories and suppose that
it is represented by an object \(x\) of \(B\).
By this we mean that the functor
\[
	B(-, x) \colon B^\op \to \Set
\]
is classified by \(p \colon E \to B\), \ie we have
a pullback square
\[
	\begin{tikzcd}
		E \ar[r] \ar[d, "p"'] 	& \bigl(\overslice{\Set}{\{\ast\}}\bigr)^\op \ar[d] \\
		B \ar[r] 				& \Set^\op
	\end{tikzcd}
\]
of categories.
In this case, \(E\) is isomorphic to \(\trbis{B}{x}\), and we may consider the object \(\bar{x} \in \Ob(E)\) corresponding under this isomorphism to \(1_x \in \trbis{B}{x}\). The object \(\bar{x} \in \Ob(E)\) can then be internally characterized inside \(E\) as being a \emph{terminal object}.

\smallskip

\textbf{The \pdftwo-categorical case.}
Let \(p \colon \E \to \B\) be a cartesian \(2\)-fibration
and suppose that it is represented by an object \(x\) of \(\B\).
That is, the \(2\)-functor
\[
	\B(-, x) \colon \B^\op \to \Cat
\]
is classified by \(p \colon \E \to \B\), meaning that by Proposition~\ref{prop:universal_fibration}
we have a pullback square
\[
	\begin{tikzcd}
		\E \ar[r] \ar[d, "p"'] 	& \bigl(\overslice{\Cat}{\final}\bigr)^\op \ar[d] \\
		\B \ar[r] 				& \Cat^\op
	\end{tikzcd}
\]
of \(2\)-categories. In this case, \(\E\) is isomorphic to \(\trbis{\B}{x}\), and we may consider the object \(\bar{x} \in \Ob(\E)\) corresponding under this isomorphism to \(1_x \in \trbis{\B}{x}\). 

\medskip

	Unlike the 1-categorical case, the object \(\bar{x}\) of \(\E\) 
	is \emph{not} biterminal in general (\(\bar{x}\) biterminal means that for every object
	\(z\) in \(\E\) the category \(\E(z, \bar{x})\) is equivalent to \(\Di_0\)). Instead, we have the following:
\begin{prop}\label{prop:pre-final}
Let \(p\colon\E \to \B\) be a cartesian 2-fibration represented by \(x \in \Ob(\B)\) and let \(\bar{x} \in \E\) be the associated lift of \(x\). Let \(M_{\car}\) denote the collection of \(p\)-cartesian edges. Then \(\bar{x}\) is \(M_{\car}\)-final in \(\E\).
\end{prop}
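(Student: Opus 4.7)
The plan is to exploit the identification \(\E \cong \trbis{\B}{x}\) furnished by Proposition~\ref{prop:universal_fibration} (under which \(\bar{x}\) corresponds to \((x, 1_x)\) and \(p\) to the slice projection), and to verify each of the four clauses of Definition~\ref{def:E-final} directly in this concrete model. Using the description recalled in paragraph~\ref{parag:lax_slice_under_point}, a 1-cell \((a, \alpha) \to (x, 1_x)\) in the lax slice is simply a pair \((f, \gamma)\) with \(f\colon a \to x\) a 1-cell of \(\B\) and \(\gamma\colon f \to \alpha\) a 2-cell (since \(1_x \comp_0 f = f\)), and a 2-cell \((f, \gamma) \to (f', \gamma')\) is a 2-cell \(\Xi\colon f \to f'\) of \(\B\) satisfying \(\gamma' \comp_1 \Xi = \gamma\). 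Thus \(\E((a, \alpha), \bar{x})\) identifies with the classical overslice category \(\B(a, x)/\alpha\).

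The overslice \(\B(a, x)/\alpha\) admits \((\alpha, 1_\alpha)\) as terminal object, the unique 2-cell to it from \((f, \gamma)\) being \(\gamma\) itself. This establishes condition~\eqref{item:final1}, and specializing to \((a, \alpha) = \bar{x}\) yields \(1_{\bar{x}} = (1_x, 1_{1_x})\) as terminal in \(\E(\bar{x}, \bar{x})\), which is condition~\eqref{item:final2}. For condition~\eqref{item:final2a}, recall from Proposition~\ref{prop:cartesian_edges} that the \(p\)-cartesian 1-cells of \(\trbis{\B}{x}\) are exactly those \((f, \gamma)\) for which the 2-cell component is invertible; the terminal edge \((\alpha, 1_\alpha) \colon (a, \alpha) \to \bar{x}\) thus lies in \(M_{\car}\), so a marked 1-cell to \(\bar{x}\) exists out of every object.

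It remains to verify condition~\eqref{item:final3}. Let \((f, \sigma) \colon (a, \alpha) \to (b, \beta)\) be a \(p\)-cartesian edge, so \(\sigma\colon \beta f \to \alpha\) is invertible by Proposition~\ref{prop:cartesian_edges}. Composition in the lax slice sends \((g, \mu) \colon (b, \beta) \to \bar{x}\) to \((gf,\, \sigma \comp_1 (\mu \comp_0 f))\); applied to the terminal object \((\beta, 1_\beta)\) of \(\E((b, \beta), \bar{x})\) this yields the pair \((\beta f, \sigma)\). Since \(\sigma\) is invertible, it provides an isomorphism in \(\B(a,x)/\alpha\) from \((\beta f, \sigma)\) to \((\alpha, 1_\alpha)\), so \((\beta f, \sigma)\) is terminal in \(\E((a, \alpha), \bar{x})\). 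Hence \((f, \sigma)^\ast\) preserves terminal objects, completing the verification that \(\bar{x}\) is \(M_{\car}\)-final.

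No significant obstacle is anticipated: once the hom-categories of the lax slice are unpacked as classical overslices in \(\B\), the four conditions reduce to direct observations, and Proposition~\ref{prop:cartesian_edges} provides the bridge between terminality and the cartesian marking that makes condition~\eqref{item:final3} transparent.
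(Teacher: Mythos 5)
Your proof is correct and follows essentially the same route as the paper: identify \(\E\) with \(\trbis{\B}{x}\), observe that \((\alpha,1_\alpha)\) is terminal in each hom-category into \(1_x\), and use Proposition~\ref{prop:cartesian_edges} to see that the terminal edges coincide with the cartesian ones. The only cosmetic difference is that for condition~\eqref{item:final3} the paper invokes closure of cartesian edges under composition, whereas you compute the composite \((\beta f,\sigma)\) explicitly and exhibit the isomorphism to \((\alpha,1_\alpha)\) directly; both amount to the same observation.
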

\begin{proof}
Without loss of generality we may assume that \(\E=\trbis{\B}{x}\) and \(\bar{x} = 1_{x}\).
	The objects of the category \(\trbis{\B}{x}(\alpha, 1_x)\) are of the form \((\beta, \zeta)\)
	\[
		\begin{tikzcd}[column sep=tiny]
			y \ar[rr, "\beta"] \ar[rd, "\alpha"{swap, name=a}] 	&	& x \ar[ld, equal, "1_x"] \\
													& x &
			\ar[Rightarrow, from=1-3, to=a, shorten <=3mm, shorten >=2mm, "\zeta"{swap}]
		\end{tikzcd}
		\quad , \quad\text{that is}\quad
		\begin{tikzcd}
			y \ar[r, bend left, "\beta"{name=b}] \ar[r, bend right, "\alpha"{swap, name=a}] & x
			\ar[Rightarrow, from=b, to=a, shorten <=1mm, shorten >= 1mm, "\zeta"]
		\end{tikzcd} ,
	\]
	and its morphisms \(\Xi \colon (\beta, \zeta) \to (\beta', \zeta')\) are of the form
	\[
				\begin{tikzcd}[column sep=small, row sep=3.2em]
					y \ar[rr, bend left, "\beta", ""{swap, name=s}]
						\ar[rr, bend right, ""{name=t}, "{\beta'}"{above right = 0pt and 3pt}]
						\ar[rd, "\alpha"{swap, name=a}]	&	& x \ar[ld, equal, "1_x"{pos=0.4}, ""{swap, pos=0.1, name=b}]	\\
												& x &
					\ar[Rightarrow, from=s, to=t, "\Xi"{swap}]
					\ar[Rightarrow, from=b, to=a, shorten <=3mm, shorten >=3mm, "{\zeta'}"]
				\end{tikzcd}
				=
				\begin{tikzcd}[column sep=small, row sep=3.2em]
					y \ar[rr, bend left, "\beta"] \ar[rd, "\alpha"{swap, name=a}]
						&	& x \ar[ld, equal, "1_x"{pos=0.4}, ""{swap, pos=0.1, name=b}]	\\
						& x &
					\ar[Rightarrow, from=b, to=a, shorten <=3mm, shorten >=3mm, "\zeta"']
				\end{tikzcd}
		\quad , \quad\text{that is}\quad
		\zeta' \comp_1 \Xi = \zeta.
	\]
	We claim that the object \((\alpha, 1_\alpha)\) of \(\trbis{\B}{x}(\alpha, 1_x)\) is terminal.
	Indeed, for every other object \((\beta, \zeta)\) in \(\trbis{\B}{x}(\alpha, 1_x)\) we have that
	\(\zeta \colon (\beta, \zeta) \to (\alpha, 1_\alpha)\) is a morphism in \(\trbis{\B}{x}(\alpha, 1_x)\).
	Moreover, it is clearly the unique going from \((\beta, \zeta)\) to \((\alpha, 1_\alpha)\),
	as any such morphism \(\Xi\) must satisfy
	\(1_x \comp_1 \Xi = \Xi = \zeta\). In addition, taking \(\alp = 1_{x}\) we also get that the identity on \(1_{x}\) is terminal in \(\trbis{\B}{x}(1_x,1_x)\). 
	
	We have thus established that \(1_x\) is a pre-final object of \(\trbis{\B}{x}\). To show that it is \(M_{\car}\)-final we now point out that by Proposition~\ref{prop:cartesian_edges} each of the arrows \((\alpha, 1_\alpha)\colon \alpha \to 1_x\) just contracted are \(p\)-cartesian, so that~\eqref{item:final2a} holds, and that any other \(p\)-cartesian edge \(\alpha \to 1_x\) is of the form
	\[
		\begin{tikzcd}[column sep=tiny]
			y \ar[rr, "\beta"] \ar[rd, "\alpha"{swap, name=a}] 	&	& x \ar[ld, equal, "1_x"] \\
													& x &
			\ar[Rightarrow, from=1-3, to=a, shorten <=3mm, shorten >=2mm, "\simeq"{swap}]
		\end{tikzcd}
	\]
	and hence isomorphic in \(\trbis{\B}{x}(\alpha,1_x)\) to \((\alpha,1_\alpha)\). This means that all \(p\)-cartesian edges \(\alpha \to 1_x\) are in fact terminal, and since the collection of \(p\)-cartesian edges is also closed under isomorphism we have that these are exactly the terminal edges in \(\trbis{\B}{x}(\alpha,1_x)\). Condition~\eqref{item:final3} is then a consequence of the fact that \(p\)-cartesian edges are closed under composition.
	\end{proof}

	\begin{parag}\label{paragr:Bx_contraction}
	Combining Proposition~\ref{prop:pre-final} and Proposition~\ref{prop:contraction-terminality}
	we obtain a contraction~\(\gamma\) on \(\trbis{\B}{x}\) with center \(1_x\).
	This contraction can be explicitly described as follows:
		\begin{itemize}
			\item To a given object \((y, \alpha)\) we assign the \(1\)-cell \((\alpha, 1_\alpha)\),
			that we can depict by
				\[
					\begin{tikzcd}[column sep=tiny]
						y \ar[rr, "\alpha"] \ar[rd, "\alpha"{swap, name=a}]	&		& x \ar[ld, equal] \\
																		& x 	&
						\ar[phantom, from=a, to=1-3, "="{description}]
					\end{tikzcd}
				\]
			\item For every \(1\)-cell \((f, \zeta) \colon (y, \alpha) \to (z, \beta)\)
			we have to provide a \(2\)-cell going from \(\gamma(z, \beta) \comp_0 (f, \zeta)\) to \(\gamma(y, \alpha)\).
			By definition, \(\gamma(z, \beta) \comp_0 (f, \zeta) = (\beta f, \zeta)\) and
			\(\gamma(y, \alpha) = (\alpha, 1_\alpha)\). We set this \(2\)-cell \(\gamma(f, \zeta)\) to be \(\zeta\). This assignment can be depicted as follows:
			\begin{nscenter}
		 		\begin{tikzpicture}
					\square{
						/square/label/.cd,
							0={$y$}, 1={$z$}, 2={$x$}, 3={$x$},
							01={$f$}, 12={$\beta$}, 23={$$}, 02={$\alpha$},
							13={$\beta$}, 03={$\alpha$},
							012={${\zeta}$}, 023={$$},
							013={${\zeta}$}, 123={$$},
						/square/arrowstyle/.cd,
							23 = {equal},
							012={Leftarrow}, 023={equal},
							013={Leftarrow}, 123={equal},
							0123 = {equal}
					}
			 	\end{tikzpicture}
			\end{nscenter}
		\end{itemize}
		The assignment on \(2\)-cells is given by the identity on the 2-cells of \(\B\)
		joint with a coherence, which is trivially satisfied.
		Notice that here we are identifying the 2-cells of \(\trbis{\B}{x}\)
		(and therefore \(\trbis{(\trbis{\B}{x})}{1_x}\), too)
		with some 2-cells of \(\B\) and we are saying that the assignment of 2-cells
		of the contraction is the identity, but with the 2-cells seen in different 2-categories.
	\end{parag}

	\begin{parag}
		We give an explicit description of the \(2\)-category \(\Bxcart\) (see Definition~\ref{def:sub-car}).
		The objects are the elements of \(\Ob(\Bx)\).
		For every pair of objects \(\alpha \colon y \to x\) and \(\alpha' \colon y' \to x\),
		the hom-category \(\Bxcart\bigl((y, \alpha), (y', \alpha')\bigr)\)
		is the full subcategory of \(\Bx((y, \alpha), (y', \alpha')\bigr)\)
		spanned by \(p\)-cartesian edges. By Proposition~\ref{prop:cartesian_edges},
		these are the \(1\)-cells \((\beta, \sigma)\) of \(\Bx\) of the form
		\[
		 \begin{tikzcd}[column sep=tiny]
			 z \ar[rr, "{\beta}"] \ar[rd, "{\alpha'}"{swap, pos=0.4, name=a}] 	&	& y \ar[ld, "\alpha"] \\
													& x &
			 \ar[Rightarrow, from=1-3, to=a, shorten <=3mm, shorten >=3mm, "\sigma"{swap}, "\simeq"]
		 \end{tikzcd}
		\]
		with \(\sigma\) an invertible \(2\)-cell of \(\B\).
	\end{parag}

	\begin{prop}
	 \label{prop:biterminal_Bx}
		The object \((x, 1_x)\) is biterminal in \(\Bxcart\).
	\end{prop}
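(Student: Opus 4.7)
The plan is to verify directly that for every object $(y,\alpha) \in \Bxcart$ the hom-category $\Bxcart\bigl((y,\alpha),(x,1_x)\bigr)$ is a contractible groupoid, and hence equivalent to the terminal category $\Di_0$.

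First I would unpack what the objects and morphisms of this hom-category look like. By the description in Paragraph~\ref{parag:lax_slice_under_point} together with Proposition~\ref{prop:cartesian_edges}, an object of $\Bxcart\bigl((y,\alpha),(x,1_x)\bigr)$ is a pair $(\beta,\sigma)$, where $\beta \colon y \to x$ is a 1-cell of $\B$ and $\sigma \colon 1_x \comp_0 \beta \to \alpha$ is an \emph{invertible} 2-cell of $\B$; equivalently, it is just an invertible 2-cell $\sigma \colon \beta \to \alpha$. A morphism $(\beta,\sigma) \to (\beta',\sigma')$ in this category is a 2-cell $\Xi \colon \beta \to \beta'$ in $\B$ satisfying the compatibility $\sigma' \comp_1 \Xi = \sigma$.

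Next I would show that this hom-category is non-empty and that between any two of its objects there is a unique morphism. Non-emptiness is witnessed by $(\alpha, 1_\alpha)$, which is cartesian by Proposition~\ref{prop:cartesian_edges} since $1_\alpha$ is invertible. Given any two objects $(\beta,\sigma)$ and $(\beta',\sigma')$, the required condition $\sigma' \comp_1 \Xi = \sigma$ has a unique solution $\Xi = (\sigma')^{-1} \comp_1 \sigma$, since $\sigma'$ is invertible by assumption; moreover, this $\Xi$ is itself invertible, being a vertical composition of invertible 2-cells.

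Thus $\Bxcart\bigl((y,\alpha),(x,1_x)\bigr)$ is a contractible groupoid and consequently equivalent to $\Di_0$. Since this holds for every $(y,\alpha) \in \Ob(\Bxcart)$, the object $(x,1_x)$ is biterminal. I do not expect any serious obstacle here: everything follows from the explicit characterisation of cartesian edges in Proposition~\ref{prop:cartesian_edges} together with the observation that invertibility of $\sigma'$ forces unique solvability of the defining 2-cell equation. The only point to be a little careful about is to remember that we have already restricted to the sub-2-category $\Bxcart$, so that the invertibility of $\sigma$ is part of the data rather than an additional hypothesis.
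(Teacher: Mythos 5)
Your proposal is correct and follows essentially the same route as the paper's own proof: both exhibit $(\alpha,1_\alpha)$ as an object of the hom-category and use the invertibility of $\sigma'$ (guaranteed by Proposition~\ref{prop:cartesian_edges}) to produce the unique morphism $(\sigma')^{-1}\comp_1\sigma$ between any two objects, concluding that the hom-category is a contractible groupoid.
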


	\begin{proof}
		Consider any object \(\alpha \colon y \to x\) of \(\Bxcart\).
		We wish to show that the category \(\Bxcart\bigl((y, \alpha), (x, 1_x)\bigr)\) is equivalent to
		the terminal category \(\final\).
		First of all, this category is non-empty, since it has \(\alpha, 1_\alpha\) as an object.
		Given a \(1\)-cell \((\beta, \zeta) \colon (y, \alpha) \to (x, 1_x)\),
		the composition \(1_x \comp_0 \beta = \beta\) must be the source of
		\(\zeta\) and moreover \(\zeta \colon \beta \to \alpha\) must be an invertible
		\(2\)-cell of \(\B\) by Proposition~\ref{prop:cartesian_edges}.
		For every pair of \(1\)-cells \((\beta, \zeta), (\beta', \zeta') \colon (y, \alpha) \to (x, 1_x)\),
		a \(2\)-cell \(\tau \colon (\beta, \zeta) \to (\beta', \zeta')\) must satisfy
		the relation \(\zeta' \comp_1 \tau = \zeta\). It follows that
		such a \(2\)-cell is invertible and in fact it always exists and it is unique,
		namely it is given by \(\tau = (\zeta')^{-1} \comp_1 \zeta\).
		This shows that for every object \((y, \alpha)\) in \(\Bxcart\) the
		category \(\Bxcart\bigl((y, \alpha), (x, 1_x)\bigr)\) is
		the chaotic category on the objects of the form \((\beta, \zeta)\),
		with \(\zeta \colon \beta \to \alpha\) invertible, and it is thus
		equivalent to the terminal category \(\final\), \ie \((x, 1_x)\) is biterminal.
	\end{proof}

	\begin{rem}
		In the general case of a \(2\)-functor \(F \colon \J \to \C\)
		and a marking \(E\)	on \(\J\),
		the \(2\)-category \(\mCF\) does not necessarily have a quasi-terminal object
		and similarly \(\mCFcart\) does not necessarily have a biterminal object.
	\end{rem}

\subsection{The modified \pdftwo-category of cones}
\label{sec:special_cones}
	
	Let \(\mJ = (\J, E)\) be a marked 2-category, \(F \colon \J \to \C\) be a \(2\)-functor,
	\(\ell\) an object of \(\C\),
	\(\lambda \colon \Delta\ell \to F\) an \(E\)-cone over \(F\) and
	\(F^\triangleleft\colon \mJ^\triangleleft \to \C\) the corresponding \(2\)-functor
	(cf.~Remark~\ref{rem:cones}).
	In this section we introduce an auxiliary \(2\)-category of cones
	that is always biequivalent to \(\C^{/\ell} = \trbis{\C}{\ell}\) and that
	 is biequivalent to \(\mCF\) if and only if \((\ell, \lambda)\)
	is an \(E\)-bilimit of \(F\).

	\begin{define}
		We define \(\CFmod\) as the \(2\)-full sub-\(2\)-category of \(\trbis{\C}{F^\triangleleft}\)
		whose objects are the \(2\)-functors \(\final \ast \final \ast \J \to \C\)
		such that their restrictions to \(\final \ast \final \ast \{j\} \to \C\), for any object \(j\) of \(\J\),
		determine diagrams
		\[
			\begin{tikzcd}[column sep=tiny]
				x \ar[rr,"h"] \ar[rd, "\alpha_j"{swap,name=t}]	&		& \ell \ar[ld, "\lambda_j"{pos=0.4}] \\
																& F(j)	&
				\ar[Rightarrow, from=1-3, to=t, shorten <=5mm, shorten >=4mm, "\sigma_j"{swap}, "\simeq"{pos=.6}]
			\end{tikzcd}
		\]
		in \(\C\) with \(\sigma_j \colon \lambda_j h \to \alpha_j\) an \emph{invertible} \(2\)-cell of \(\C\).
	\end{define}

	\begin{parag}
	\label{par:structure_modified_slice}
		We denote an object of \(\CFmod\) by \((x, h, \alpha, \sigma)\),
		where we mean that:
		\begin{itemize}
			\item \(h \colon x \to \ell\) is a \(1\)-cell of \(\C\);

			\item the pair \((x, \alpha)\) is an \(E\)-cone over \(F\), so that
			in particular for every \(1\)-cell \(\kappa \colon i \to j\) in \(\J\)
			we have a triangle
			\[
				\begin{tikzcd}[column sep=tiny]
											& x \ar[dl, "\alpha_i"'] \ar[dr, "\alpha_j"{name=t}]	&		\\
					Fi \ar[rr, "F\kappa"{swap}] 	&															& Fj
					\ar[Rightarrow, from=2-1, to=t, shorten <=3mm, shorten >=3mm, "\alpha_h"{swap}]
				\end{tikzcd},
			\]
			with \(\alpha_\kappa\) invertible whenever \(\kappa \in E\).

			\item for every \(i\) in \(\Ob(\J)\),
			\(\sigma_i \colon \lambda_i h \to \alpha_i\) is an invertible \(2\)-cell  of \(\C\),
			that we can depict as
			\[
				\begin{tikzcd}[column sep=tiny]
				x \ar[rr,"h"] \ar[rd, "\alpha_i"{swap,name=t}]	&		& \ell \ar[ld, "\lambda_i"] \\
																	& F(i)	&
				\ar[Rightarrow, from=1-3, to=t, shorten <=5mm, shorten >=4mm, "\sigma_i"{swap}, "\simeq"{pos=.6}]
				\end{tikzcd}.
			\]
			Notice that by Remark~\ref{rem:lax_cones_are_lax_transformations} we have that \(\sigma \colon \lambda \cdot h \to \alpha\)
			is an invertible modification.
		\end{itemize}
		The objects of the form \((x, h, \lambda\cdot h, \iota)\), with \(\iota_i\)
		the identity \(2\)-cell of \(\lambda_i h\) for all \(i\) in \(\Ob(\J)\), are particularly simple
		and we will make use of them in the following proofs in order to simplify the coherences appearing
		in explicit form of the cells of \(\CFmod\).
		We shall also commit the notational abuse of denoting by \(\iota\) the appropriate trivial modification,
		regardless of the 1-cell of \(\C\) involved;
		for instance, for \(g \colon y \to \ell\) another 1-cell of \(\C\) we shall write \((y, g, \lambda \cdot g, \iota)\)
		where we mean that here \(\iota_i\) is the identity of \(\lambda_i g\) for all \(i \in \Ob(\J)\).

		A 1-cell from \((x, h, \alpha, \sigma)\) to \((y, g, \beta, \tau)\) is given by a triple \((f, \zeta, \mu)\),
		where \(f \colon x \to y\) is a 1-cell in \(\C\), \(\zeta \colon g \comp_0 f \to h\) a 2-cell of \(\C\)
		and \(\mu\) is the modification satisfying \(\mu \comp_1 (\tau\comp_0 f) = \sigma \comp_1 \lambda \cdot \zeta\).
		Since \(\tau\) is an invertible modification, we actually have that \(\mu\) is defined as
		\(\sigma \comp_1 \lambda\cdot \zeta \comp_1 (\tau^{-1} \comp_0 f)\).
		Notice that a 1-cell from \((x, h, \lambda\cdot h, \iota)\) to \((y, g, \lambda\cdot h, \iota)\)
		must be of the form \((f, \zeta, \lambda \cdot \zeta)\).

		In light of Lemma~\ref{lemma:simpler_object} below it will suffice to describe 
		the 2-cells of \(\CFmod\) whose 0-dimensional source
		and target are of the form \((x, h, \lambda\cdot h, \iota)\) and \((y, g, \lambda\cdot h, \iota)\).
		Here and in what follows, we will always commit the abuse of denoting by \(\iota\) the
		appropriate trivial modification. In this case, if we are given 1-cells \((f, \zeta, \lambda \cdot \zeta)\)
		and \((f', \zeta', \lambda \cdot \zeta')\) from \((x, h, \lambda\cdot h, \iota)\) to \((y, g, \lambda\cdot g, \iota)\),
		then we have that a 2-cell of \(\CFmod\) is precisely a 2-cell of \(\Cl\), that is a 2-cell \(\Xi \colon f \to f'\colon x \to y\) of \(\C\)
		satisfying \(\zeta' \comp_1 (g \comp_0 \Xi) = \zeta\). Indeed, if for instance \(k \colon i \to j\) is a 1-cell of \(\J\),
		then by the coherences imposed by the 1-cells of \(\CFmod\) we have
		\begin{equation*}
		 \begin{split}
			\bigl(\lambda_j \zeta' \comp_1 (\lambda_k g \comp_0 f') \bigr) \comp_1 (Fk \comp_0 \lambda_i g \comp_0 \Xi)
			& =  \bigl(\lambda_k h \comp_1 (Fk \comp_0 \lambda_i \zeta') \bigr) \comp_1 (Fk \comp_0 \lambda_i g \comp_0 \Xi) \\
			& = \lambda_k h \comp_1 (Fk \comp_0 \lambda_i \zeta),
		 \end{split}
		\end{equation*}
		and similarly for a 2-cell of \(\J\). Morally, the coherences of a 2-cell of \(\CFmod\)
		are encoded by higher cells of \(D_2 \ast D_0 \ast D_m\), with \(m = 0, 1, 2\), but since we are
		truncating at the 2-dimensional level, many of these higher cells are trivial (cf.~Remark~\ref{rem:1-cells_of_CF})
		and moreover we are choosing specific objects (and 1-cells) of \(\CFmod\) which further simplify
		some of the coherences involved with identities.
	\end{parag}

	We shall now prove that the \(2\)-functor \(q \colon \CFmod \to \Cl\) is a biequivalence, following the strategy outlined in \ref{par: bieq}.
	The proof is subdivided in few steps and a preliminary auxiliary lemma, that we shall use to simplify some of the coherences involved.

	\begin{lemma}\label{lemma:simpler_object}
		Let \((x, h, \alpha, \sigma)\) be an object of \(\CFmod\). Then it is isomorphic to the object
		\((x, h, \lambda \cdot h, \iota)\), where \(\iota_i\)
		is the identity \(2\)-cell of \(\lambda_i h\) for all \(i\) in \(\Ob(\J)\).
	\end{lemma}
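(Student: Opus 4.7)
The plan is to construct an explicit 1-cell and its inverse in $\CFmod$. Concretely, I would set $\phi = (1_x, 1_h, \sigma) \colon (x, h, \alpha, \sigma) \to (x, h, \lambda\cdot h, \iota)$, with underlying 1-cell $1_x$, underlying 2-cell $1_h \colon h\comp_0 1_x \to h$, and modification $\sigma$. The coherence condition that a 1-cell of $\CFmod$ must satisfy, namely $\mu \comp_1 (\tau \comp_0 f) = \sigma_{\mathrm{source}} \comp_1 (\lambda \cdot \zeta)$, specialises here (with $\tau = \iota$, $\zeta = 1_h$) to the tautology $\sigma = \sigma$, so $\phi$ is well-defined. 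Note that the data $(1_x, 1_h)$ alone would, by invertibility of $\iota$, force $\mu = \sigma$; the key observation is that the invertibility of $\sigma$ (which is baked into the definition of $\CFmod$) lets us also write down the reverse.

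Next I would define $\psi = (1_x, 1_h, \sigma^{-1}) \colon (x, h, \lambda\cdot h, \iota) \to (x, h, \alpha, \sigma)$. Well-definedness uses that $\sigma$ is invertible and reduces the coherence condition to $\sigma^{-1}\comp_1 \sigma = \iota$.

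The last step is to verify $\psi \comp_0 \phi = 1_{(x,h,\alpha,\sigma)}$ and $\phi \comp_0 \psi = 1_{(x,h,\lambda\cdot h,\iota)}$. Applying the composition formula of a slice 2-category component-wise (as reviewed, for instance, in the cartesian-edge discussion of \S\ref{sec:car-edges}), the underlying 1-cell and 2-cell components give trivial composites of identities, while the modification components reduce to $\sigma \comp_1 \sigma^{-1} = \iota_\alpha$ and $\sigma^{-1}\comp_1 \sigma = \iota_{\lambda\cdot h}$, both of which hold because $\sigma$ is an invertible modification.

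There is no serious obstacle: the content of the lemma is really just that the invertible modification $\sigma$ built into the data of an object of $\CFmod$ can be absorbed into a canonical isomorphism. The only mild bookkeeping is unpacking the coherence that relates the underlying 2-cell $\zeta$ to the modification component $\mu$ through the modifications $\sigma$ and $\tau$, and checking it is automatically satisfied for the specific data chosen above.
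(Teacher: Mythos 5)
Your proposal is correct and takes essentially the same route as the paper: the paper simply observes that the underlying $1$-cell $(1_x,\sigma)$ of $\CF$ is an isomorphism satisfying $(h,\iota)\comp_0(1_x,\sigma)=(h,\sigma)$, and that this is exactly the statement that $(1_x,1_h,\sigma^{-1})$ is an isomorphism in $\CFmod$, which is what you verify by writing out both directions and checking the coherences and the two composites explicitly.
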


	\begin{proof}
		The \(1\)-cell \((1_x, \sigma)\) of \(\CF\) is an isomorphism and it is
		such that \((h, \iota) \comp_0 (1_x, \sigma) = (h, \sigma)\).
		This is equivalent to say that \((1_x, 1_h, \sigma^{-1})\) is an isomorphism
		from \((x, h, h \cdot \lambda, \iota)\) to \((x, h, \alpha, \sigma)\) in \(\CFmod\).
	\end{proof}

	\begin{parag}[Surjectivity on objects]
		Consider an object \(h \colon x \to \ell\) of \(\Cl\).
		Then the object  \((x, h, h \cdot \lambda, \iota)\) of \(\CFmod\) maps to \((x, h)\).
	\end{parag}

	\begin{parag}[Fullness on \(1\)-cells]
		Let \((f, \zeta) \colon (x, h) \to (y, g)\) be a \(1\)-cell in \(\Cl\),
		that we can depict as
		\[
			\begin{tikzcd}[column sep=tiny]
				x \ar[rr,"f"] \ar[rd, "h"{swap,name=t}]	&		& y \ar[ld, "g"] \\
												& \ell	&
				\ar[Rightarrow, from=1-3, to=t, shorten <=3mm, shorten >=2mm, "\zeta"{swap}]
			\end{tikzcd}
		\]
		Given two objects \((x, h, \alpha, \sigma)\) and \((y, g, \beta, \tau)\) of \(\CFmod\) that lift the source and target (respectively) of \((f,\zeta)\),
		we want to find a \(1\)-cell \(\Di_1 \ast \Di_0 \ast \J \to \C\)
		from \((x, h, \alpha, \sigma)\) to \((y, g, \beta, \tau)\) such that its projection via \(q\)
		is the \(1\)-cell \((f, \zeta)\) of \(\Cl\). 
		According to paragraph~\ref{par:structure_modified_slice}, the triple \((f, \zeta, \mu)\)
		is a 1-cell of \(\CFmod\) with the correct boundary, with \(\mu = \sigma \comp_1 \lambda\cdot \zeta \comp_1 (\tau^{-1} \comp_0 f)\),
		and it is clearly mapped to \((f, \zeta)\). Notice that this lifting requires no additional data, but just
		commutativity conditions, so that such a lift must be unique.
		We also remark that if we choose \((x, h, \lambda\cdot h, \iota)\) and \((y, g, \lambda\cdot g, \iota)\)
		as objects lifting the source and target of \((f, \zeta)\), then
		the 1-cell \((f, \zeta, \lambda\cdot \zeta)\) between these two objects of \(\CFmod\)
		maps to \((f, \zeta)\). A simple verification shows that the square
		\begin{equation}
		 \label{dia:replacing_objects}
			\begin{tikzcd}[column sep=large]
				(x, h, \alpha, \sigma) \ar[r, "{(f, \zeta, \mu)}"] \ar[d, "{(1_x, 1_h, \sigma)}"']
						& (y, g, \beta, \tau) \ar[d, "{(1_y, 1_g, \tau)}"] 	\\
				(x, h, \lambda\cdot h, \iota) \ar[r, "{(f, \zeta, \lambda \cdot \zeta)}"']
						& (y, g, \lambda \cdot \zeta, \iota)
			\end{tikzcd}
		\end{equation}
		of \(\CFmod\) is commutative, where the vertical 1-cells are (the inverses of) those
		of Lemma~\ref{lemma:simpler_object}. In particular, this means that it was actually
		enough to find the (unique) lift \((f, \zeta, \lambda\cdot \zeta)\) of \((f, \zeta)\) with \((x, h, \lambda\cdot h, \iota)\) and \((y, g, \lambda\cdot g, \iota)\)
		as source and target, since then the composition
		\[
			(1_y, 1_g, \tau^{-1}) \comp_0 (f, \zeta, \lambda\cdot \zeta) \comp_0 (1_x, 1_h, \sigma)
		\]
		is a lift of \((f, \zeta)\) with \((x, h, \alpha, \sigma)\) and \((y, g, \beta, \tau)\)
		as source and target.		
	\end{parag}

	\begin{parag}[Fullness on \(2\)-cells]
		Let \(\Xi \colon (f, \zeta) \to (f', \zeta') \colon (x, h) \to (y, g)\) be a \(2\)-cell in \(\Cl\),
		that we can depict as
		\[
			\begin{tikzcd}[column sep=small, row sep=3.2em]
					x \ar[rr, bend left, "f", ""{swap, name=s}]
						\ar[rr, bend right, ""{name=t}, "{f'}"{above right = 0pt and 3pt}]
						\ar[rd, "h"{swap, name=a}]	&	& y \ar[ld, "g", ""{swap, pos=0.1, name=b}]	\\
												& \ell &
					\ar[Rightarrow, from=s, to=t, "\Xi"{swap}]
					\ar[Rightarrow, from=b, to=a, shorten <=3mm, shorten >=3mm, "{\zeta'}"]
			\end{tikzcd}
				=
			\begin{tikzcd}[column sep=small, row sep=3.2em]
					x \ar[rr, bend left, "f"] \ar[rd, "h"{swap, name=a}]
						&	& y \ar[ld, "g", ""{swap, pos=0.1, name=b}]	\\
						& \ell &
					\ar[Rightarrow, from=b, to=a, shorten <=3mm, shorten >=3mm, "\zeta"']
			\end{tikzcd}
			\]
		Given two objects \((x, h, \alpha, \sigma)\) and \((y, g, \beta, \tau)\) of \(\CFmod\), and lifts of \((f, \zeta, \mu)\) and \((f', \zeta', \mu')\) to \(\CFmod\),
		we wish to find a \(2\)-cell \(\Di_2 \ast \Di_0 \ast \J \to \C\)
		from \((f, \zeta, \mu)\) to \((f', \zeta', \mu')\) such that its projection via \(q\)
		is the \(2\)-cell \(\Xi\) of \(\Cl\).
		Thanks to Lemma~\ref{lemma:simpler_object} and the observation at the end of the previous point, we can again assume
		that \(\alpha = \lambda \cdot h\), \(\beta = \lambda \cdot g\) and \(\sigma_i\) and \(\tau_i\)
		are the identity \(2\)-cells for all objects \(i\) of \(\J\). Indeed, if we find a lift \(\Xi'\) of \(\Xi\)
		with \((x, h, \lambda\cdot h, \iota)\) and \((y, g, \lambda\cdot g, \iota)\) as source and target,
		then the 2-cell \((1_y, 1_g, \tau^{-1}) \comp_0 \Xi' \comp_0 (1_x, 1_h, \sigma)\) is a lift of \(\Xi\)
		having \((x, h, \alpha, \sigma)\) and \((y, g, \beta, \tau)\) as source and target.
		By the discussion in paragraph~\ref{par:structure_modified_slice}, we know that \(\Xi\)
		is a 2-cell of \(\CFmod\) with correct source and target and that it clearly lifts \(\Xi\).
	\end{parag}

	\begin{parag}[Faithfulness on \(2\)-cells]
		By the observation in paragraph~\ref{par:structure_modified_slice}.
		a \(2\)-cell of \(\CFmod\)
		from \((f, \zeta, \lambda\cdot \zeta)\) to \((f', \zeta', \lambda\cdot \zeta')\)
		simply amounts to a \(2\)-cell \(\Xi \colon f \to f'\) of \(\C\)
		satisfying \(\zeta' \comp_1 (g \ast_0 \Xi) = \zeta\),
		which is the same thing as a \(2\)-cell of \(\Cl\) from
		\((f, \zeta)\) to \((f', \zeta')\). Therefore the map
		\[
			\CFmod\bigl((f, \zeta, \lambda\cdot \zeta), (f', \zeta', \lambda\cdot \zeta')\bigr) \to \Cl\bigl((f, \zeta), (f', \zeta')\bigr)
		\]
		is a bijection.
		Consider now two generic parallel 1-cells \((f, \zeta, \mu)\) and \((f', \zeta', \mu')\) of \(\CFmod\),
		say from \((x, h, \alpha, \sigma)\) to \((y, g, \beta, \tau)\).
		The commutativity of diagram~\eqref{dia:replacing_objects}
		implies that we have an isomorphism of categories
		\[
			\CFmod\bigl((x, h, \alpha, \sigma), (y, g, \beta, \tau)\bigr) \cong
			\CFmod\bigl((x, h, \lambda \cdot h, \iota), (y, g, \lambda \cdot g, \iota)\bigr)
		\]
		mapping \((f, \zeta, \mu)\) to \((f, \zeta, \lambda\cdot \zeta)\),
		so that in particular we get a bijection
		\[
			\CFmod\bigl((f, \zeta, \mu), (f', \zeta', \mu')\bigr) \cong
				\CFmod\bigl((f, \zeta, \lambda\cdot \zeta), (f', \zeta', \lambda\cdot \zeta')\bigr).
		\]
		Hence, the \(2\)-functor \(q \colon \CFmod \to \Cl\) is faithful on \(2\)-cells.
	\end{parag}

	Putting together the previous four points, we get the following result.

	\begin{prop}
	\label{prop:mod_slice_trivial_fibration_over_point}
		The canonical projection \(q \colon \CFmod \to \Cl\) is a biequivalence.
	\end{prop}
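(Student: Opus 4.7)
The plan is to verify that $q \colon \CFmod \to \Cl$ is a biequivalence using the trivial-fibration criterion recalled in~\ref{par: bieq}: it suffices to check that $q$ is surjective on objects, full on $1$-cells with prescribed boundary, and bijective on $2$-cells with prescribed boundary. The key simplification, which makes the whole proof manageable, is an auxiliary lemma (\emph{cf.}~Lemma~\ref{lemma:simpler_object}) to the effect that every object $(x, h, \alpha, \sigma)$ of $\CFmod$ is isomorphic to a ``canonical'' object $(x, h, \lambda \cdot h, \iota)$ in which $\alpha$ is literally the post-composition $\lambda \cdot h$ and $\sigma$ is the identity modification; the isomorphism is given by $(1_x, 1_h, \sigma^{-1})$, which is well-defined precisely because $\sigma$ is invertible by definition of $\CFmod$.

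First I would prove this lemma, by unwinding the definition of a $1$-cell of $\CFmod$ and checking that $(1_x, 1_h, \sigma^{-1})$ indeed satisfies the coherence constraint $\mu \comp_1 (\iota \comp_0 1_x) = \sigma^{-1} \comp_1 \lambda \cdot 1_h$ (which reduces to $\sigma \comp_1 \sigma^{-1} = 1$). Surjectivity of $q$ on objects is then trivial: given $h \colon x \to \ell$ in $\Cl$, the object $(x, h, \lambda\cdot h, \iota)$ lifts it.

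Next I would handle fullness on $1$\nbd-cells. Given $(f, \zeta) \colon (x, h) \to (y, g)$ in $\Cl$ and prescribed lifts $(x, h, \alpha, \sigma)$ and $(y, g, \beta, \tau)$ of its endpoints, I note that, since $\tau$ is invertible, the third component of any lift $(f, \zeta, \mu)$ is forced to equal $\sigma \comp_1 \lambda\cdot\zeta \comp_1 (\tau^{-1} \comp_0 f)$; this both proves existence and makes the lifting unique. The tidier route, and the one I would present, is to first find the evident lift $(f, \zeta, \lambda \cdot \zeta)$ between the canonical objects $(x, h, \lambda\cdot h, \iota)$ and $(y, g, \lambda\cdot g, \iota)$, and then conjugate by the isomorphisms from the lemma to obtain the required lift between $(x, h, \alpha, \sigma)$ and $(y, g, \beta, \tau)$. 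The analogous argument for $2$-cells goes through verbatim: a $2$-cell $\Xi$ of $\Cl$ lifts tautologically to a $2$-cell between $1$-cells of the form $(f, \zeta, \lambda\cdot\zeta)$ and $(f', \zeta', \lambda\cdot\zeta')$, after which the lemma transports this lift to the general case.

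Finally, for faithfulness on $2$\nbd-cells, I would observe that for the canonical form of $1$-cells, a $2$\nbd-cell $(f, \zeta, \lambda\cdot\zeta) \to (f', \zeta', \lambda\cdot\zeta')$ in $\CFmod$ is literally the data of a $2$-cell $\Xi \colon f \to f'$ of $\C$ satisfying $\zeta' \comp_1 (g \comp_0 \Xi) = \zeta$, which coincides with a $2$-cell in $\Cl$ with this boundary; the higher coherence conditions coming from $1$- and $2$-cells of $\J$ are automatic (\emph{cf.}~Remark~\ref{rem:1-cells_of_CF}). Conjugation by the isomorphisms of the lemma then identifies the hom-sets for arbitrary boundary data, concluding the argument. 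The main obstacle, and what motivates the whole canonical-form device, is the bookkeeping of all the modifications $\mu$ together with their interaction with the invertible modifications $\sigma, \tau$; once one has convinced oneself that the invertibility of $\sigma$ and $\tau$ makes all these data redundant up to isomorphism, the four conditions of~\ref{par: bieq} become routine.
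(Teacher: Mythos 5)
Your proposal is correct and follows essentially the same route as the paper: the same reduction lemma identifying every object with a canonical $(x, h, \lambda\cdot h, \iota)$ via $(1_x, 1_h, \sigma^{-1})$, followed by the same four-step verification of the trivial-fibration criterion from~\ref{par: bieq}, including the observation that the third component $\mu$ of a lifted $1$-cell is forced by invertibility of $\tau$ and that the higher coherences are automatic. No gaps.
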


\begin{cor}
\label{cor:characterization-of-limits}
Let \((\J,E)\) be a marked 2-category, \(F\colon \J \to \C\) a 2-functor,  and \((\ell,\lambda) \in \CF\) an \(E\)-lax cone over \(F\). Then \((\ell,\lambda)\) is an \(E\)-bilimit cone if and only if the projection \(\CFmod \to \mCF\) is a biequivalence.
\end{cor}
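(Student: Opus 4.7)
The plan is to express the projection 2-functor $\pi \colon \CFmod \to \mCF$ as a composite $L \circ q$ (up to a natural isomorphism) of the biequivalence $q \colon \CFmod \to \Cl$ of Proposition~\ref{prop:mod_slice_trivial_fibration_over_point} with a natural comparison 2-functor $L \colon \Cl \to \mCF$, and then to combine 2-out-of-3 for biequivalences with the Grothendieck correspondence to reduce the statement to the defining property of an $E$-bilimit cone.

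First I would define $L \colon \Cl \to \mCF$ on objects by $(x, h) \mapsto (x, \lambda \cdot h)$, on 1-cells by $(f, \zeta) \mapsto (f, \lambda \cdot \zeta)$, and by the identity on underlying 2-cells of $\C$; a direct verification using the coherences of $\mCF$ shows this is a strict 2-functor over $\C$. Next, I would exhibit an invertible 2-natural transformation $L \circ q \Rightarrow \pi$: its component at an object $(x, h, \alpha, \sigma) \in \CFmod$ is the invertible 1-cell $(1_x, \sigma^{-1}) \colon (x, \lambda\cdot h) \to (x, \alpha)$ of $\mCF$, and the compatibility $\mu_j = \sigma_j \comp_1 \lambda_j\zeta \comp_1 (\tau_j^{-1} \comp_0 f)$ recorded in Paragraph~\ref{par:structure_modified_slice} makes these components strictly natural in 1-cells of $\CFmod$. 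Since $q$ is a biequivalence, the 2-out-of-3 property recalled in Paragraph~\ref{par: bieq} shows that $\pi$ is a biequivalence if and only if $L$ is one.

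Finally, I would identify the 1-fibrations $\Cl \to \C$ and $\mCF \to \C$ with those classifying the 2-functors $\C(-, \ell)$ and $[\J, \C]_E(\Delta -, F)$ respectively (via Example~\ref{example:slice_classifies_cones}), and observe that $L$ is a 2-functor over $\C$ that unstraightens the natural transformation $\lambda^\ast = \lambda \cdot \Delta(-)$ appearing in the definition of $E$-bilimit. By Remark~\ref{rem:S/U}, the Grothendieck biequivalence preserves and reflects equivalences, hence $L$ is a biequivalence precisely when $\lambda^\ast_x$ is an equivalence of categories for every $x \in \Ob(\C)$, i.e., precisely when $(\ell, \lambda)$ is an $E$-bilimit cone. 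The one step requiring actual bookkeeping — and which I expect to be the main, though still routine, technical point — is the verification that $L \circ q \cong \pi$ via $\sigma$; everything else is a formal consequence of the straightening/unstraightening correspondence already recorded in the text.
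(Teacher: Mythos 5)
Your proposal is correct and follows essentially the same route as the paper: both factor the projection through the biequivalence \(q \colon \CFmod \to \Cl\) of Proposition~\ref{prop:mod_slice_trivial_fibration_over_point} and a comparison 2-functor \(\Cl \to \mCF\) (your \(L\), which the paper leaves implicit as the unstraightening of \(\lambda^\ast\)), identify the two via the invertible 2-natural transformation with components \((1_x,\sigma^{-1})\) coming from Lemma~\ref{lemma:simpler_object} and diagram~\eqref{dia:replacing_objects}, and then conclude by 2-out-of-3 together with the fact (Theorem~\ref{S/U} and Remark~\ref{rem:S/U}) that the Grothendieck correspondence preserves and reflects equivalences. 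Your extra explicitness about the formula for \(L\) and the naturality of \(\sigma\) is harmless and matches the verifications the paper has already recorded.
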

\begin{proof}
		By definition, \((\ell,\lambda)\) is an \(E\)-bilimit cone if and only if the canonical 2-natural transformation
		\[
			\lambda \cdot (-) \colon \C(-, \ell) \to [\J, \C]_E(\Delta -, F)  
		\]
		of functors \(\C^\op \to \Cat\), is an equivalence.
		Since the fibrations classifying these two 2-functors are
		\(\Cl\) and \(\mCF\), respectively (cf.~Example~\ref{example:slice_classifies_cones}),
		by virtue of Theorem~\ref{S/U} (see also Remark~\ref{rem:S/U}), this 2-natural transformation is an equivalence
		if and only if the corresponding 2-functor \(\Cl \to \mCF\) depicted below is a biequivalence.
		\[
			\begin{tikzcd}[column sep=tiny]
				\Cl \ar[rr] \ar[rd]	&		& \mCF \ar[ld]	\\
									& \C 	&
			\end{tikzcd}
		\]
		Now, the triangle
		\[
			\begin{tikzcd}[column sep=tiny]
							& \CFmod \ar[ld, "\simeq"'] \ar[rd, ""{swap, name=s}]	&		\\
				\Cl \ar[rr]	&									& \mCF
				\ar[from=s, to=2-1, Rightarrow, shorten >= 10pt, shorten <= 5 pt, "\cong"{swap, pos=0.35}]
			\end{tikzcd}
		\]
		of 2-categories commutes up-to an invertible 2-natural transformation
		given by Lemma~\ref{lemma:simpler_object} (see also diagram~\eqref{dia:replacing_objects}).
		Hence, applying Proposition~\ref{prop:mod_slice_trivial_fibration_over_point}
		together with the 2-out-of-3 property of biequivalences
		and the stability of biequivalences by 2-natural isomorphisms
		to the previous triangle
		we deduce that the 2-functor \(\Cl \to \mCF\) is a biequivalence
		if and only if \(\CFmod \to \mCF\) is one.
\end{proof}

\section{Bilimits and bifinal cones}

	\begin{parag}
		We fix a marked 2-category \(\mJ = (\J, E)\) and a 2-functor \(F \colon \J \to \C\),
		and we denote by \(p \colon \mCF \to \C\) the projection \(2\)-functor.
		In this section we show that a cone \((\ell, \lambda)\) over \(F\) is an \(E\)-bilimit cone
		if and only if it is the center of a limiting contraction \(H\) on \(\mCF\), as defined below.
		\begin{notate}
			In what follows, we will use the letter \(H\) to denote a contraction, in contrast with the use of \(\gamma\) and other greek letters in previous sections. In fact, we will confine our use of greek letters to denote cones on diagrams.  
		\end{notate}
	\end{parag}

		\begin{define}
		\label{def: limit contraction}
		 	Let \(H\) be a contraction on \(\mCF\). We say that \(H\) is a \ndef{limiting contraction} if it is an \(M_{\car}\)-contraction, where \(M_{\car}\) is the collection of \(p\)-cartesian arrows in \(\mCF\).
		A cone \((\ell, \lambda)\) over \(\mCF\) will be called \ndef{limiting bifinal} if
		it is the center of a limiting contraction on \(\mCF\).
		 \end{define} 
		
	\begin{parag}
	 \label{parag:limit_contraction}
		More explicitly, the condition of being an \(M_{\car}\)-contraction means that
		 \begin{enumerate}
			\item\label{item:cartesian_cones} For every cone \((x, \alpha)\) over \(F\), the \(1\)-cell \(H(x, \alpha)\)
			is \(p\)-cartesian.
			\item\label{item:image_cartesian_edges} For every \(p\)-cartesian edge \((f, \mu) \colon (x, \alpha) \to (y, \beta)\),
			the 2-cell \(H(f, \mu)\) of \(\mCF\) is invertible.
		\end{enumerate}
		The first condition then means that 
		for all objects \(i\) of \(\J\) the 2-cell
			\[
				\begin{tikzcd}[column sep=tiny]
					x \ar[rd, "\alpha_i"{swap, name=a}] \ar[rr]	&		& \ell \ar[ld, "\lambda_i"]	\\
																& Fi 	&
					\ar[Rightarrow, from=1-3, to=a, shorten >=2.5mm, shorten <=3.5mm]
				\end{tikzcd}
			\]
		of \(H(x, \alpha)\) is invertible. The second condition
		states that in the commutative diagram
		\begin{equation}
		\label{condition 2  of limiting contraction}
		\vcenter{\hbox{
			\begin{tikzpicture}[scale=1.6]
				\square{
					/square/label/.cd,
						0={$x$}, 1={$y$}, 2={$\ell$}, 3={$Fi$},
						01={$f$}, 12={$h(y, \beta)$}, 23={$\lambda_i$},
						02={$h(x, \alpha)$}, 13={$\beta_i$}, 03={$\alpha_i$},
						012={$H(f, \mu)$}, 023={$\sigma(x, \alpha)_i$},
						013={$\mu_i$}, 123={$\sigma(y, \beta)_i$},
					/square/arrowstyle/.cd,
						012 = {Leftarrow}, 013 = {Leftarrow},
						023 = {Leftarrow}, 123 = {Leftarrow},
						0123={equal},
					/square/labelstyle/.cd,
						012={below right}, 123={below left}
				}
			\end{tikzpicture}
			}}
		\end{equation}
		of \(\C\), where \(H(x, \alpha) = (h(x, \alpha), \sigma(x, \alpha))\) and
		\(H(y, \beta) = (h(y, \beta), \sigma(y, \beta))\), whenever the 2-cells \(\mu_i\)
		are invertible for all \(i \in \Ob(\J)\), the 2\nbd-cell
		\(H(f, \mu)\) is invertible.
		Equivalently, the 2-cell \(H(f, \mu)\) is invertible whenever the 2-cells \(\lambda_i \comp_0 H(f, \mu)\) are invertible
		for all \(i \in \Ob(\J)\).
		
	\end{parag}

	\begin{rem}
	It follows from Remark~\ref{rem:homotopy-sound} and the fact that \(M_{\car}\) is closed under isomorphisms of 1-cells that if \(H\) and \(K\) are two contractions on \(\mCF\) with center \((\ell,\lambda)\) then \(H\) is limiting if and only if \(K\) is limiting.
	\end{rem}

Applying Proposition~\ref{prop:E-final-iff-E-contraction} to the present case yields:
	\begin{prop}
			\label{prop: limiting iff final}
			\(\mCF\) admits a limiting contraction with center \((\ell, \lambda)\) if and only if every cone \((\ell',\lambda' )\) admits a \(p\)-cartesian edge \((\ell',\lambda') \to (\ell,\lambda)\) and the 2-functor
		\(\{(\ell, \lambda)\} \to (\mCF, M_{\car})\) is final.
	\end{prop}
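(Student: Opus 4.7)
The plan is to deduce this proposition directly from the equivalence $(i) \Leftrightarrow (ii) \Leftrightarrow (iii)$ in Proposition~\ref{prop:E-final-iff-E-contraction}, specialized to the marked 2-category $(\mCF, M_{\car})$ with distinguished object $(\ell, \lambda)$. Under this dictionary, Definition~\ref{def: limit contraction} says that a limiting contraction with center $(\ell, \lambda)$ is precisely an $M_{\car}$-contraction with center $(\ell, \lambda)$, so that ``$\mCF$ admits a limiting contraction with center $(\ell, \lambda)$'' is literally the statement that $(\ell, \lambda)$ is $M_{\car}$-bifinal, namely condition (ii) of that proposition.

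For the forward implication, suppose $H$ is a limiting contraction with center $(\ell, \lambda)$. By condition~\eqref{item:cartesian_cones} of paragraph~\ref{parag:limit_contraction}, for each cone $(\ell',\lambda')$ the 1-cell $H(\ell',\lambda') \colon (\ell',\lambda') \to (\ell,\lambda)$ is $p$-cartesian, which immediately supplies the required family of $p$-cartesian edges. Moreover, setting $f_{(\ell',\lambda')} := H(\ell',\lambda')$ for every cone $(\ell',\lambda')$ produces a system of marked edges satisfying $f_{(\ell,\lambda)} = 1_{(\ell,\lambda)}$ (by the definition of contraction), so the hypotheses of Proposition~\ref{prop:E-final-iff-E-contraction} are met, and its implication $(ii) \Rightarrow (iii)$ yields the finality of the inclusion $\{(\ell,\lambda)\} \hookrightarrow (\mCF, M_{\car})$.

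For the converse, assume that the inclusion $\{(\ell,\lambda)\} \hookrightarrow (\mCF, M_{\car})$ is final and that for every cone $(\ell',\lambda')$ there exists a $p$-cartesian edge $(\ell',\lambda') \to (\ell,\lambda)$. Choose such an edge $f_{(\ell',\lambda')}$ for each cone $(\ell',\lambda')$, selecting $f_{(\ell,\lambda)} = 1_{(\ell,\lambda)}$, which is $p$-cartesian since identities are always $p$-cartesian. The hypotheses of Proposition~\ref{prop:E-final-iff-E-contraction} are now in force, and its implication $(iii) \Rightarrow (ii)$ produces an $M_{\car}$-contraction on $\mCF$ with center $(\ell,\lambda)$, \ie~a limiting contraction with this center.

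The argument is essentially a bookkeeping exercise, and no step presents any genuine difficulty: the entire content of the result is already packaged inside Proposition~\ref{prop:E-final-iff-E-contraction}. The only point requiring care is the interplay with the Warning following that proposition, where the existence of a marked edge $a \to c$ for every $a$ is implicit; here we simply promote this assumption to an explicit hypothesis via the stipulated supply of $p$-cartesian edges, accounting for the asymmetry noted in the Warning.
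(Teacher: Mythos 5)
Your proof is correct and follows exactly the route the paper takes: the paper's entire proof is the single line ``Applying Proposition~\ref{prop:E-final-iff-E-contraction} to the present case yields:'', and your argument is just a careful unpacking of that application, including the necessary bookkeeping (choosing $f_{(\ell',\lambda')}$ to be the supplied $p$-cartesian edges, with $f_{(\ell,\lambda)}=1_{(\ell,\lambda)}$) that addresses the asymmetry flagged in the Warning. Nothing further is needed.
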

		
Given a contraction \(H\) on \(\mCF\) with center \((\ell,\lambda)\), we may consider the canonical projection
	\(\CFmod \to \mCF\), where \(F^{\triangleleft}\colon \Di_0 \ast \mJ \to \C\) is the cone determined by \((\ell,\lambda)\). 
	\begin{prop}
	 \label{prop:contraction_to_biequivalence}
		If \((\ell, \lambda)\) is limiting bifinal 
		then the associated projection \(2\)\nbd-func\-tor \(\CFmod \to \mCF\) is a biequivalence.
	\end{prop}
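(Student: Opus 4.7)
The plan is to leverage Corollary~\ref{cor:characterization-of-limits} by showing that, under the hypothesis of limiting bifinality, $(\ell, \lambda)$ is automatically an $E$-bilimit cone. So I fix a limiting contraction $H$ on $\mCF$ with center $(\ell, \lambda)$ and, for each cone $(x, \alpha)$, write $H(x, \alpha) = (h(x, \alpha), \sigma(x, \alpha))$. The task then becomes to verify, for every object $z$ of $\C$, that the canonical functor $\lambda^\ast \colon \C(z, \ell) \to [\J, \C]_E(\Delta z, F)$ is an equivalence of categories.

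Essential surjectivity on objects is read off directly from condition~\eqref{item:cartesian_cones} of the limiting contraction: for any cone $(z, \alpha)$, the 1-cell $H(z, \alpha)$ of $\mCF$ is $p$-cartesian, so by Proposition~\ref{prop:cartesian_edges} the modification $\sigma(z, \alpha) \colon \lambda \cdot h(z, \alpha) \to \alpha$ is invertible, exhibiting an isomorphism $\alpha \cong \lambda^\ast(h(z, \alpha))$ in the target category.

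The main obstacle, and the technical heart of the argument, is fully faithfulness. Given $h_1, h_2 \colon z \to \ell$, I need to show that every modification $\rho \colon \lambda \cdot h_1 \to \lambda \cdot h_2$ is of the form $\lambda \cdot \psi$ for a unique 2-cell $\psi \colon h_1 \to h_2$ of $\C$. The key move is to repackage $\rho$ as a 1-cell $(h_1, \rho) \colon (z, \lambda \cdot h_2) \to (\ell, \lambda)$ of $\mCF$ (the modification axiom for $\rho$ matches precisely the 1-cell coherence), parallel to the obviously $p$-cartesian 1-cell $(h_2, 1_{\lambda h_2})$. Combining Proposition~\ref{prop: limiting iff final} with Proposition~\ref{prop:E-final-iff-E-contraction} and Remark~\ref{rem:terminal-is-marked}, limiting bifinality implies that $(\ell, \lambda)$ is $M_{\car}$-final in $\mCF$ and every $p$-cartesian edge into it is terminal in its hom-category; in particular $(h_2, 1_{\lambda h_2})$ is terminal in $\mCF\bigl((z, \lambda \cdot h_2), (\ell, \lambda)\bigr)$. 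Unpacking the definition of a 2-cell in $\mCF$, the unique morphism $(h_1, \rho) \to (h_2, 1_{\lambda h_2})$ is precisely a 2-cell $\psi \colon h_1 \to h_2$ of $\C$ satisfying $\lambda_i \comp_0 \psi = \rho_i$ for every $i$, and this gives simultaneously existence (fullness) and uniqueness (faithfulness).

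Once $\lambda^\ast$ is known to be an equivalence for every $z$, it follows by definition that $(\ell, \lambda)$ is an $E$-bilimit cone, and the desired biequivalence $\CFmod \to \mCF$ is then delivered by Corollary~\ref{cor:characterization-of-limits}.
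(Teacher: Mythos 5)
Your proof is correct, but it takes a genuinely different route from the paper's. The paper establishes the biequivalence $\CFmod \to \mCF$ directly, by checking the trivial-fibration conditions of \ref{par: bieq}: surjectivity on objects via $H(x,\alpha)$, fullness on $1$-cells by conjugating $H(f,\mu)$ with the invertible $2$-cells $H(h,\sigma)$ and $H(g,\tau)$, and full faithfulness on $2$-cells via an explicit computation using the contraction coherences and the interchange law. You instead prove the implication ``limiting bifinal $\Rightarrow$ $E$-bilimit'' (i.e.\ $(3)\Rightarrow(1)$ of Theorem~\ref{thm:bilimits_are_bifinal}) and then invoke Corollary~\ref{cor:characterization-of-limits}; this is not circular, since that corollary is proved independently via Proposition~\ref{prop:mod_slice_trivial_fibration_over_point} and the straightening biequivalence. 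Your two key steps both check out: essential surjectivity of $\lambda^\ast$ is exactly condition~\eqref{item:cartesian_cones} combined with Proposition~\ref{prop:cartesian_edges}, and for full faithfulness the repackaging of a modification $\rho \colon \lambda\cdot h_1 \to \lambda\cdot h_2$ as the $1$-cell $(h_1,\rho)\colon (z,\lambda\cdot h_2) \to (\ell,\lambda)$ is legitimate (the modification axiom is literally the $1$-cell coherence~\eqref{eq:coherence_1cell-vs-1cell}), while terminality of the $p$-cartesian edge $(h_2, 1_{\lambda\cdot h_2})$ in its hom-category follows from $M_{\car}$-finality via Remark~\ref{rem:terminal-is-marked}, and a morphism $(h_1,\rho)\to(h_2,1_{\lambda\cdot h_2})$ is precisely a $\psi$ with $\lambda\cdot\psi=\rho$. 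What your approach buys is a shorter argument that sidesteps the paper's $2$-cell computations by outsourcing them to the finality results of \S\ref{sec:bilimits}; what it costs is that the proposition no longer stands independently of the straightening machinery behind Corollary~\ref{cor:characterization-of-limits} (though for the overall structure of the main theorem this makes no difference, since that corollary is needed anyway).
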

	\begin{proof}
		Call \(H\) the $M_{\car}$-contraction. We will prove that the projection \(\CFmod \to \mCF\)
		is a trivial fibration.
		\begin{description}
			\item[Surjectivity on objects]
				For a cone \((x, \alpha)\) over \(F\), the \(1\)-cell
				\(H(x, \alpha)\) is by definition an object of \(\CFmod\) that lifts \((x, \alpha)\).

			\item[Fullness on \(1\)-cells]
				Let \((x, h, \alpha, \sigma)\) and \((y, g, \beta, \tau)\) be objects of \(\CFmod\)
				and fix a \(1\)-cell \((f, \mu) \colon (x, \alpha) \to (y, \beta)\) of \(\mCF\).
				We wish to lift \((f, \mu)\) to \(\CFmod\).
				Applying the contraction to \((f, \mu)\) gives a triangle
				\[
					\begin{tikzcd}[column sep=tiny]
						{(x, \alpha)} \ar[rr, "{(f, \mu)}"] \ar[rd, "{H(x, \alpha)}"{swap, name=t}]
							&					& {(y, \beta)} \ar[ld, "{H(y, \beta)}"]  \\
							& {(\ell, \lambda)}	&
						\ar[Rightarrow, from=1-3, to=t, shorten <=4mm, shorten >=5mm, "\Gamma"{swap}]
					\end{tikzcd}
				\]
				in \(\mCF\), where we have set \(\Gamma = H(f, \mu)\). 
				By assumption, the 1-cells \((h, \sigma) \colon (x, \alpha) \to (\ell, \lambda)\)
				and \((g, \tau) \colon (y, \beta) \to (\ell, \lambda)\) 
				of \(\mCF\)
				are \(p\)-cartesian. 
				Applying to these 1-cells the limiting contraction \(H\),
				by point~\eqref{item:image_cartesian_edges} we get two invertible
				\(2\)-cells
				\[
					H(h, \sigma) \colon (h, \sigma) \to H(x, \alpha)
					\quad\text{and}\quad
					H(g, \tau) \colon (g, \tau) \to H(y, \beta)
				\]
				of \(\mCF\).
				The composite 2-cell
				\(\zeta = H(h, \sigma)^{-1} \comp_1 H(f, \mu) \comp_1 \bigl(H(g, \tau) \comp_0 (f, \mu)\bigr) \)
				of \(\mCF\) fills the triangle
				\[
					\begin{tikzcd}[column sep=tiny]
						{(x, \alpha)} \ar[rr, "{(f, \mu)}"] \ar[rd, "{(h, \sigma)}"{swap, name=t}]
							&					& {(y, \beta)} \ar[ld, "{(g, \tau)}"]  \\
							& {(\ell, \lambda)}	&
						\ar[Rightarrow, from=1-3, to=t, shorten <=4mm, shorten >=5mm, "\zeta"{swap}]
					\end{tikzcd}
				\]
				therefore providing a 1-cell
				\((f, \zeta,\mu,) \colon (x, h, \alpha, \sigma) \to (y, g, \beta, \tau)\)
				of \(\CFmod\) lifting the 1-cell \((f, \mu)\) of \(\mCF\).

			\item[Fullness on \(2\)-cells]
				Let \((f, \zeta, \mu)\) and \((f', \zeta',\mu')\) be two parallel \(1\)-cells of \(\CFmod\) from
				\((x, h, \alpha, \sigma)\) to \((y, g, \beta, \tau)\). We wish to find a lift to \(\CFmod\)
				for any \(2\)-cell \(\Xi \colon (f, \mu) \to (f', \mu')\) of \(\mCF\).
				Using the description of~\ref{par:structure_modified_slice}, we may view \(\Xi\) itself as such a lift: for this, we have to check that the identity
				\begin{equation}
				 \label{eq:fullness_2-cells}
					\zeta' \comp_1 (g \comp_0 \Xi) = \zeta
				\end{equation}
				is satisfied in \(\mCF\). Applying the constraint of the contraction \(H\)
				to the 2-cell \(\zeta \colon (gf, \tau \cdot \mu) \to (h, \sigma) \colon (x, \alpha) \to (\ell, \lambda)\),
				where \((gf, \tau \cdot \mu) = (g, \tau) \comp_0 (f, \mu)\), of \(\mCF\) we get the relation
				\[
					H(gf, \tau \cdot \mu) = H(h, \sigma) \comp_1 \zeta.
				\]
				By the functoriality of the contraction, the left-hand side of this equation is equal to
				\(H(f, \mu) \comp_1 \bigl(H(g, \tau) \comp_0 (f, \mu)\bigr)\) and moreover the 2-cell
				\(H(h, \sigma)\) is invertible, as \((h, \sigma)\) is \(p\)-cartesian by assumption.
				This implies that we can write
				\[
					\zeta = H(h, \sigma)^{-1} \comp_1 H(f, \mu) \comp_1 \bigl(H(g, \tau) \comp_0 (f, \mu)\bigr)
				\]
				and similarly
				\[
					\zeta' = H(h, \sigma)^{-1} \comp_1 H(f', \mu') \comp_1 \bigl(H(g, \tau) \comp_0 (f', \mu')\bigr).
				\]
				Hence, equation~\eqref{eq:fullness_2-cells} is satisfied if and only if we have
				\[
					H(f', \mu') \comp_1 \bigl(H(g, \tau) \comp_0 (f', \mu')\bigr) \comp_1 \bigl((g, \tau) \comp_0 \Xi\bigr) =
						H(f, \mu) \comp_1 \bigl(H(g, \tau) \comp_0 (f, \mu)\bigr).
				\]
				Notice that by the interchange rule for
				\[
					\begin{tikzcd}[column sep=large]
					  (x, \alpha) \ar[r, bend left, "{(f, \mu)}", ""{swap, name=si}] \ar[r, bend right, ""{name=ti}, "{(f', \mu')}"']	&
					  (y, \beta) \ar[r, bend left, "{(g, \tau)}", ""{swap, name=sii}] \ar[r, bend right, ""{name=tii}, "{H(y, \beta)}"']	&
					  (\ell, \lambda)
					  \ar[Rightarrow, from=si, to=ti, "{\Xi}"]
					  \ar[Rightarrow, from=sii, to=tii, "{H(g, \tau)}"{description}]
					\end{tikzcd}
				\]
				we actually have
				\[
					\bigl(H(g, \tau) \comp_0 (f', \mu')\bigr) \comp_1 \bigl((g, \tau) \comp_0 \Xi\bigr) =
						\bigl(H(y, \beta) \comp_0 \Xi\bigr) \comp_1 \bigl(H(g, \tau) \comp_0 (f, \mu)\bigr)
				\]
				and the 2-cell \(H(g, \tau)\) is invertible, since \((g, \tau)\) is \(p\)-cartesian by assumption,
				so that equation~\eqref{eq:fullness_2-cells} is satisfied if and only if
				the following equation is:
				\begin{equation}
				 \label{eq:fullness_2-cell-reduced}
				 	H(f', \mu') \comp_1 \bigl(H(y, \beta) \comp_0 \Xi\bigr) = H(f, \mu).
				\end{equation}
				Now, using the coherence for 2-cells given  by the contraction \(H\)
				applied to the 2-cell on the left-hand side of the previous equation
				and using the constraints for which \(H(H(x, \alpha)) = 1_{H(x, \alpha)}\)
				and \(H(\ell, \lambda) = 1_{(\ell, \lambda)}\), we finally get
				that equation~\eqref{eq:fullness_2-cell-reduced} is indeed satisfied.

			\item[Faithfulness on \(2\)-cells]
				This is clear by the explicit description of the \(2\)-cells
				of \(\CFmod\) and \(\mCF\), which are just \(2\)-cells of \(\C\) satisfying some coherence
				conditions. \qedhere
		\end{description}
	\end{proof}

	We now prove the converse of Proposition~\ref{prop:contraction_to_biequivalence}.
	For convenience, we first record the following observation about bilimits.

	\begin{prop}
	 \label{prop:bilimit_to_contraction}
		If \((\ell, \lambda)\) is an \(E\)-bilimit of the \(2\)-functor \(F \colon \J \to \C\),
		then \(\mCF\) admits a limiting contraction with center \((\ell, \lambda)\). 
	\end{prop}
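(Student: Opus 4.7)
The plan is to verify that $(\ell, \lambda)$ is $M_{\car}$-final in $\mCF$ in the sense of Definition~\ref{def:E-final}, and then invoke the implication $(i) \Rightarrow (ii)$ of Proposition~\ref{prop:E-final-iff-E-contraction} to produce the required $M_{\car}$-contraction, which is precisely a limiting contraction by Definition~\ref{def: limit contraction}.

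The key observation is that the hom-category $\mCF\bigl((x, \alpha), (\ell, \lambda)\bigr)$ is the comma category of the functor $\lambda^\ast \colon \C(x, \ell) \to [\J, \C]_E(\Delta x, F)$ over the object $\alpha$. Indeed, unpacking the description of the slice 2-category, an object of this hom-category is a pair $(h, \sigma)$ with $h \in \C(x, \ell)$ and $\sigma \colon \lambda \cdot h \to \alpha$ a modification, and a morphism $(h, \sigma) \to (h_0, \sigma_0)$ is a $2$-cell $\Xi \colon h \to h_0$ of $\C$ satisfying $\sigma_0 \comp_1 (\lambda \cdot \Xi) = \sigma$ component-wise. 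By hypothesis $\lambda^\ast$ is an equivalence of categories for every $x \in \Ob(\C)$.

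From this I extract the two facts I need. On the one hand, essential surjectivity of $\lambda^\ast$ gives, for each cone $(x, \alpha)$, a $1$-cell $h_0 \colon x \to \ell$ and an invertible modification $\sigma_0 \colon \lambda \cdot h_0 \to \alpha$; by Proposition~\ref{prop:cartesian_edges} the pair $(h_0, \sigma_0)$ is exactly a $p$-cartesian edge from $(x, \alpha)$ to $(\ell, \lambda)$. On the other hand, full faithfulness of $\lambda^\ast$ implies that any such $(h_0, \sigma_0)$ is terminal in $\mCF\bigl((x, \alpha), (\ell, \lambda)\bigr)$, because for any $(h, \sigma)$ there exists a unique $2$-cell $\Xi \colon h \to h_0$ satisfying $\lambda \cdot \Xi = \sigma_0^{-1} \comp_1 \sigma$.

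I then choose, for every cone $(x, \alpha)$, a $p$-cartesian edge $f_{(x, \alpha)} \colon (x, \alpha) \to (\ell, \lambda)$, taking in particular $f_{(\ell, \lambda)}$ to be the identity $1_{(\ell, \lambda)}$, which is trivially $p$-cartesian. With this choice, all four conditions of Definition~\ref{def:E-final} are satisfied: \eqref{item:final1} and \eqref{item:final2} follow from the identification of terminal edges with $p$-cartesian ones (applied to the identity in the latter case); \eqref{item:final2a} is given by the choice of $f_{(x, \alpha)}$; and \eqref{item:final3} follows from the closure of $p$-cartesian edges under composition, which ensures that precomposition by a $p$-cartesian edge sends $p$-cartesian (hence terminal) edges to $p$-cartesian (hence terminal) edges. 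Invoking Proposition~\ref{prop:E-final-iff-E-contraction} then delivers the desired limiting contraction. The main subtlety in this plan lies in the identification of terminal objects in $\mCF\bigl((x, \alpha), (\ell, \lambda)\bigr)$ with $p$-cartesian edges, which requires both halves of the equivalence $\lambda^\ast$: essential surjectivity to produce such edges, and full faithfulness to establish their terminality.
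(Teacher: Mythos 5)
Your proof is correct, but it takes a genuinely different route from the paper's. The paper constructs the limiting contraction \(H\) by hand: essential surjectivity of \(\lambda\cdot(-)\) gives \(H\) on objects, full faithfulness gives the unique \(2\)-cell \(\Gamma\) defining \(H\) on \(1\)-cells, and then functoriality, the cartesianness conditions, and the coherence of \(H\) with respect to \(2\)-cells are verified directly by (fairly lengthy) pasting computations. You instead observe that \(\mCF\bigl((x,\alpha),(\ell,\lambda)\bigr)\) is the comma category of the equivalence \(\lambda^{\ast}\) over \(\alpha\), so that the \(p\)-cartesian edges into \((\ell,\lambda)\) singled out by Proposition~\ref{prop:cartesian_edges} exist (essential surjectivity) and are terminal (full faithfulness); you then check the four conditions of Definition~\ref{def:E-final} for \(M_{\car}\) and let Proposition~\ref{prop:E-final-iff-E-contraction} (ultimately Proposition~\ref{prop:contraction-terminality}) manufacture the \(M_{\car}\)-contraction. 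This outsources all the coherence verifications to the general finality--contraction dictionary of Section~2 and is noticeably shorter and more conceptual; it also makes the \(M_{\car}\)-finality of the bilimit cone (the content of Proposition~\ref{prop: limiting iff final}) completely transparent. What the paper's explicit construction buys in exchange is a concrete formula for \(H\) on cells, which is useful for intuition but not logically needed. One small point worth making explicit in your step for condition~\eqref{item:final3}: an arbitrary terminal object of \(\mCF\bigl((y,\beta),(\ell,\lambda)\bigr)\) need not be assumed \(p\)-cartesian a priori, but it is isomorphic to the \(p\)-cartesian terminal one, and since precomposition preserves isomorphisms and sends that \(p\)-cartesian edge to a \(p\)-cartesian (hence terminal) edge, it preserves all terminal objects. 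This is a one-line fix, not a gap.
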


\begin{proof}
	Let \((\ell,\lambda)\) be an \(E\)-bilimit of \(F\). Our goal is to construct a limiting contraction \(H\) with center \((\ell, \lambda)\).

	Since \((\ell,\lambda)\) is an \(E\)-bilimit the functor 
	\[
		\lambda \cdot (-) \colon \C(x, \ell) \to [\J, \C]_E(\Delta x, F)
	\]
	is an equivalence for every \(x \in \Ob(\C)\). 
	In particular, \(\lambda \cdot (-)\) is essentially surjective, so that we can find, for any \(\alp \in [\J, \C]_E(\Delta x, F)\), a 1-cell \(h \colon x \to \ell\) of \(\C\) and an isomorphism \(\sigma\colon \alp \xrightarrow{\cong} \lambda \cdot h\). 
	The data of \(\alp,h\) and \(\sig\) then determine a \(p\)-cartesian 1-cell of \(\mCF\) depicted by the triangle 
	\[
		\begin{tikzcd}[column sep=tiny]
			(x, \alpha) \ar[rr, "{(1_x, \sigma)}"] \ar[dr, "{(h, \sigma)}"']	& & (x, \lambda \cdot h) \ar[dl, "{(h, 1_{\lambda \cdot h})}"] \\
					&	(\ell, \lambda)		&
		\end{tikzcd}
	\]
	We define our contraction on the level of objects by associating to \((x,\alp)\) the \(p\)-cartesian edge \(H(x, \alpha) := (h, \sigma)\), where we may assume without loss of generality that we have picked \(H(\ell, \lambda)\) to be \(1_{(\ell, \lambda)} = (1_\ell, 1_\lambda)\).
	
	Now consider a 1-cell \((f, \mu) \colon (x, \alpha) \to (y, \beta)\) in \(\mCF\), that can be seen as a 1-cell \(\mu\)
	of \([\J, \C]_E(\Delta x, F)\) from \(\alpha\) to \(\beta \cdot f\) (that is, a modification between
	the \(E\)-lax natural transformations \(\alpha\) and \(\beta \cdot f\)).  
	Let \((h,\sig) = H(x,\alp)\) and \((g,\tau) = H(y,\bet)\) be the 1-cells constructed above. 	The composite \(\rho = \sigma^{-1} \comp_1 \mu \comp_1 (\tau \comp_0 \Delta f)\) then gives a morphism from 
	\(\lambda \cdot(gf)\) to \(\lambda \cdot h\) in the category \([\J, \C]_E(\Delta x, F)\).
	Since \(\lambda\cdot(-)\) is fully faithful this morphism lifts to a unique morphism \(\Gamma\colon gf \to h\) in \(\C(x,\ell)\), which we can write as 
	a 2-cell  
	\[
		\begin{tikzcd}[column sep=tiny]
			x \ar[rr, "f"] \ar[rd, "h"', ""{name=t}]	&&	y \ar[ld, "g"] \\
						& \ell &
			\ar[from=1-3, to=t, Rightarrow, shorten <= 10pt, "\Gamma"']
		\end{tikzcd},
	\]
	in \(\C\).  
	One immediately checks, by applying \(\lambda\cdot (-)\), that we have
	\[
		\sigma_i \comp_1 (\lambda_i \comp_0 \Gamma) = \mu_i \comp_1 (\tau_i \comp_0 f)
	\]
	for all \(i \in \Ob(\J)\). 
	We may thus consider \(\Gamma\)
	as a 2-cell of \(\mCF\) with source \(H(y, \beta) \comp_0 (f, \mu)\) and target \(H(x, \alpha)\). We then extend our contraction to the level of 1-morphisms by setting \(H(f, \mu) = \Gamma\).
	Explicitly, the 2-cell \(H(f, \mu)\) is given by the pasting
	\begin{equation}
	 \label{dia:bilimits_to_contraction-action_1-cell}
		\begin{tikzcd}[column sep=small]
			(x, \alpha) \ar[rr, "{(1_x, \sigma)}"] \ar[rrrd, "{(h, \sigma)}"']	&&
			(x, \lambda \cdot h) \ar[rr, "{(f, \lambda \cdot \Gamma)}"] \ar[rd, "{(h, 1_{\lambda\cdot h})}"{description, name=t}]	&&
			(y, \lambda \cdot g) \ar[rr, "{(1_y, \tau^{-1})}"]\ar[ld, "{(g, 1_{\lambda\cdot g})}"{description}]		&&
			(y, \beta)	\ar[llld, "{(g, \tau)}"]	\\
				&&&		(\ell, \lambda)		&&&		
				\ar[Rightarrow, from=1-5, to=t, shorten <= 10pt, "\Gamma"{description, pos=0.65}]	
		\end{tikzcd}
	\end{equation}
	in $\mCF$, where the left-most and the right-most triangles are commutative by definition.
	With this description at hand, and given that \(\Gamma\) is uniquely determined by the fully faithfullness of \(\lambda\cdot(-)\), it is easy to verify that the assignment \(H\) is compatible with composition of 1-cells, that is 
	\[H((f',\mu') \circ (f,\mu)) = H(f,\mu) \ast_1 (H(f',\mu') \ast_0 (f,\mu))\]
	for any pair of composable 1-cells \((f,\mu),(f',\mu')\)  of \(\mCF\).
	In addition, since \(\lambda \cdot\Gamma =  \sigma^{-1} \comp_1 \mu \comp_1 (\tau \comp_0 \Delta f)\) with \(\tau\) and \(\sig\) invertible and \(\lambda \cdot (-)\) is an equivalence we have that \(\Gamma\) is invertible whenever \(\mu\) is invertible. 
		We conclude that \(H\) sends \(p\)-cartesian 1-cells to invertible 2-cells, and consequently that \(H(H(x, \alpha)) = 1_{H(x, \alpha)}\) by Remark~\ref{parag:apriori-differ}.
	
	To show that \(H\) constitutes a limiting contraction it will now suffice to show that for any 2-cell \(\Xi \colon (f, \mu) \to (f', \mu') \colon (x, \alpha) \to (g, \beta)\) of \(\mCF\),
	the relation
	\begin{equation}
	 \label{eq:bilimits_to_contraction-constraint_2-cells}
		H(f, \mu) = H(f', \mu') \comp_1 (H(y, \beta) \comp_0 \Xi)
	\end{equation}
	is satisfied. 
We begin by noticing that since \(\Xi\) is a 2-cell of \(\mCF\),
	by definition we have \(\mu_i = \mu_i' \comp_1 (\beta_i \comp_0 \Xi)\) for every object \(i\) of \(\J\).
	Using the previous part of the proof, we know
	that there exist 1-cells \(h \colon x \to \ell\) and \(g \colon y \to \ell\),
	isomorphic modifications \(\sigma \colon \lambda \cdot h \to \alpha\) and \(\tau \colon \lambda\cdot g \to \beta\)
	and a unique 2-cells \(\Gamma \colon gf \to h\) and \(\Gamma' \colon gf' \to h\) of \(\C\) such that
	\begin{align*}
		\mu & = \sigma \comp_1 \lambda\cdot \Gamma \comp_1 (\tau^{-1} \comp_0 \Delta f), \\
		\mu' & = \sigma \comp_1 \lambda\cdot \Gamma' \comp_1 (\tau^{-1} \comp_0 \Delta f').
	\end{align*}
	So the modification \(\mu\) is equal to
	\[
		\sigma \comp_1 \lambda\cdot \Gamma' \comp_1 (\tau^{-1} \comp_0 \Delta f') \comp_1 (\beta \comp_0 \Delta\Xi),
	\]
	which using the interchange law can be rewritten as
	\[
		\sigma \comp_1 \lambda\cdot \Gamma' \comp_1 (\lambda\cdot g \comp_0 \Delta\Xi) \comp_1 (\tau^{-1} \comp_0 \Delta f).
	\]
	Since \(\sigma\) and \(\tau\) are invertible by construction, we obtain the relation
	\begin{equation}
	 \label{eq:bilimits_to_contraction-data_2-cell}
		\lambda\cdot \Gamma = \lambda\cdot \Gamma' \comp_1 (\lambda\cdot g \comp_0 \Delta\Xi).
	\end{equation}
	Observe that since \((1_x, \sigma^{-1}) \colon (x, \lambda\cdot h) \to (x, \alpha)\) and \((1_y, \tau) \colon (y, \beta) \to (y, \lambda\cdot g)\)
	are invertible 1-cells of \(\mCF\), equation~\eqref{eq:bilimits_to_contraction-constraint_2-cells} holds if and only if
	it is whiskered by these two cells. Namely, we must show that
	\[
		\begin{tikzcd}[column sep=small]
			(x, \lambda \cdot h) \ar[rr, "{(1_x, \sigma^{-1})}"] \ar[rrrd, "{(h, 1_{\lambda\cdot h})}"']	&&
			(x, \alpha) \ar[rr, "{(f, \mu)}"] \ar[rd, "{(h, \sigma)}"{description, name=t}]	&&
			(y, \beta) \ar[rr, "{(1_y, \tau)}"]\ar[ld, "{(g, \tau)}"{description}]		&&
			(y, \lambda \cdot g)	\ar[llld, "{(g, 1_{\lambda\cdot g})}"]	\\
				&&&		(\ell, \lambda)		&&&		
				\ar[Rightarrow, from=1-5, to=t, shorten <= 10pt, "\Gamma"{description, pos=0.65}]	
		\end{tikzcd}
	\]
	is equal to
	\[
		\begin{tikzcd}[row sep=huge, column sep=small]
			(x, \lambda \cdot h)
				\ar[rr, "{(1_x, \sigma^{-1})}"] \ar[rrrd, "{(h, 1_{\lambda\cdot h})}"']	&&
			(x, \alpha)
				\ar[rr, bend left=20, "{(f, \mu)}"{description, name=sXi}] \ar[rr, bend right=20, "{(f', \mu')}"{description, name=tXi}]  \ar[rd, "{(h, \sigma)}"{description, name=t}]	&&
			(y, \beta) \ar[rr, "{(1_y, \tau)}"]\ar[ld, "{(g, \tau)}"{description}]		&&
			(y, \lambda \cdot g)	\ar[llld, "{(g, 1_{\lambda\cdot g})}"]	\\
				&&&		(\ell, \lambda)		&&&		
				\ar[Rightarrow, from=1-5, to=t, shift left=5pt, shorten <= 10pt, "{\Gamma'}"{description, pos=0.65}]
				\ar[Rightarrow, from=sXi, to=tXi, "{\Xi}"]
		\end{tikzcd}
	\]
	We can rewrite this as
	\[
		\begin{tikzcd}[row sep=large, column sep=tiny]
			(x, \lambda \cdot h) 
				\ar[rr, "{(f, \lambda\cdot \Gamma)}"] \ar[rd, "{(h, 1_{\lambda\cdot h})}"', ""{name=t}]	&&
			(y, \lambda \cdot g)	\ar[ld, "{(g, 1_{\lambda\cdot g})}"]	\\
				&		(\ell, \lambda)		&
				\ar[Rightarrow, from=1-3, to=t, shorten <= 10pt, shorten >=5pt, "\Gamma"{description, pos=0.55}]	
		\end{tikzcd}
		\quad = \quad
		\begin{tikzcd}[row sep=huge, column sep=tiny]
			(x, \lambda \cdot h)
				\ar[rr, bend left=20, "{(f, \lambda\cdot \Gamma)}"{description, name=sXi}] \ar[rr, bend right=20, "{(f', \lambda\cdot\Gamma')}"{description, name=tXi}] 
				\ar[rd, "{(h, 1_{\lambda\cdot h})}"', ""{name=t}]	&&
			(y, \lambda \cdot g)	\ar[ld, "{(g, 1_{\lambda\cdot g})}"]	\\
				&		(\ell, \lambda)		&	
				\ar[Rightarrow, from=1-3, to=t, shift left=7pt, shorten <= 10pt, shorten >=5pt, "{\Gamma'}"{description, pos=0.55}]
				\ar[Rightarrow, from=sXi, to=tXi, "{\Xi}"]
		\end{tikzcd}
	\]
	But the equality between these 2-cells of \(\mCF\) is expressed precisely by equation~\eqref{eq:bilimits_to_contraction-data_2-cell},
	which is satisfied by assumption. This complete the definition of a limiting contraction \(H\) with center \((\ell, \lambda)\),
	thereby finishing the proof.
\end{proof}

We are now ready to prove the converse of Proposition~\ref{prop:contraction_to_biequivalence}.

	\begin{cor}
	 \label{prop:biequivalence_to_contraction}
		Let \((\ell, \lambda)\) be an \(E\)-lax cone over
		the 2-functor \(F \colon \J \to \C\).
		If the projection \(2\)-functor \(\CFmod \to \mCF\) is a biequivalence then \((\ell,\lambda)\) is limiting bifinal.
	\end{cor}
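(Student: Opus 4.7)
The plan is to deduce this corollary as an immediate combination of
Corollary~\ref{cor:characterization-of-limits} and
Proposition~\ref{prop:bilimit_to_contraction}, with no further work required beyond
unwinding the definitions. The key observation is that the biequivalence hypothesis
on \(\CFmod \to \mCF\) is, by design, precisely the characterization of
\(E\)-bilimits given in Corollary~\ref{cor:characterization-of-limits}, and the
existence of a limiting contraction with a given \(E\)-bilimit cone as center is
precisely the content of Proposition~\ref{prop:bilimit_to_contraction}.

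More precisely, first I would invoke Corollary~\ref{cor:characterization-of-limits}
to conclude from the assumed biequivalence \(\CFmod \to \mCF\) that
\((\ell,\lambda)\) is an \(E\)-bilimit of \(F\). Then I would apply
Proposition~\ref{prop:bilimit_to_contraction} to produce a limiting contraction
\(H\) on \(\mCF\) whose center is \((\ell,\lambda)\). By
Definition~\ref{def: limit contraction}, this says exactly that \((\ell,\lambda)\)
is limiting bifinal, which is the claim.

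There is essentially no obstacle here: the real content has already been packaged
into the two results being combined. The only point worth flagging is that the
biequivalence property transports through Theorem~\ref{S/U} (see
Remark~\ref{rem:S/U}) in the same way as in the proof of
Corollary~\ref{cor:characterization-of-limits}, so that a pseudo-natural
equivalence of representable functors suffices to recover the strict defining
property of an \(E\)-bilimit up to equivalence of categories. Once this is
observed, the argument is a one-line composition of the two cited results.
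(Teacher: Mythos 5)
Your proof is correct and is exactly the paper's argument: the paper's own proof reads ``This follows from Corollary~\ref{cor:characterization-of-limits} and Proposition~\ref{prop:bilimit_to_contraction}.'' Your unwinding of the two cited results is accurate and nothing further is needed.
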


	\begin{proof}
		This follows from Corollary~\ref{cor:characterization-of-limits} and Proposition~\ref{prop:bilimit_to_contraction}.
	\end{proof}

	We have all the elements to finally state and prove our main theorem.

	\begin{thm}
	 \label{thm:bilimits_are_bifinal}
		Let \((\ell, \lambda)\) be an \(E\)-lax cone
		of the 2-functor \(F \colon \J \to \C\). The following statements are equivalent:
		\begin{enumerate}
			\item\label{item:thm_bilimit} the \(E\)-lax \(F\)-cone \((\ell, \lambda)\) is an \(E\)-bilimit of \(F\);

			\item\label{item:thm_biequivalence} the induced 2-functor \(\CFmod \to \mCF\) is a biequivalence;

			\item\label{item:thm_bifinal} the object \((\ell, \lambda)\) of \(\mCF\) is limiting bifinal.
		\end{enumerate}
	\end{thm}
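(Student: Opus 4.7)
The plan is simply to assemble the equivalence from the three pairwise implications that have already been established in the paper. All the real work has been done in the propositions and corollary preceding the theorem, so the proof of Theorem~\ref{thm:bilimits_are_bifinal} amounts to a short bookkeeping argument.

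First, I would observe that the implication~\eqref{item:thm_bilimit}$\Leftrightarrow$\eqref{item:thm_biequivalence} is precisely the content of Corollary~\ref{cor:characterization-of-limits}, which characterises an $E$-bilimit cone by the biequivalence of the projection \(\CFmod \to \mCF\). This is the bridge between the ``representability'' formulation of bilimits and the purely fibrational formulation involving the modified $2$-category of cones.

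Next, I would close the triangle of implications by using Proposition~\ref{prop:contraction_to_biequivalence} for \eqref{item:thm_bifinal}$\Rightarrow$\eqref{item:thm_biequivalence}, and Corollary~\ref{prop:biequivalence_to_contraction} for \eqref{item:thm_biequivalence}$\Rightarrow$\eqref{item:thm_bifinal}. These two statements together with the previous step already yield the full equivalence: indeed, \eqref{item:thm_bilimit}$\Leftrightarrow$\eqref{item:thm_biequivalence}$\Leftrightarrow$\eqref{item:thm_bifinal}. Alternatively, one can feed the chain through Proposition~\ref{prop:bilimit_to_contraction}, which directly gives \eqref{item:thm_bilimit}$\Rightarrow$\eqref{item:thm_bifinal}, so that the three statements are cyclically related by
\[
\eqref{item:thm_bilimit}\xRightarrow{\text{Prop.~\ref{prop:bilimit_to_contraction}}}\eqref{item:thm_bifinal}
\xRightarrow{\text{Prop.~\ref{prop:contraction_to_biequivalence}}}\eqref{item:thm_biequivalence}
\xRightarrow{\text{Cor.~\ref{cor:characterization-of-limits}}}\eqref{item:thm_bilimit}.
\]

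There is no genuine obstacle at the level of the theorem itself: all the subtle 2\nbd-categorical constructions (explicitly writing out the contraction from a bilimit via the essentially surjective functor \(\lambda\cdot(-)\), checking invertibility of the coherence 2-cells on cartesian edges, and trivial-fibration-checking for \(\CFmod \to \mCF\)) have been absorbed into the three preceding statements. The only thing to verify is that the three arrows in the cycle above are compatible in the sense that they can be composed, which is immediate since each implication is an implication between the literal statements~\eqref{item:thm_bilimit},~\eqref{item:thm_biequivalence},~\eqref{item:thm_bifinal} as they appear in the theorem. Hence the proof reduces to citing the three results and drawing the cycle.
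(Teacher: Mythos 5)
Your proposal is correct and takes essentially the same route as the paper: the equivalence \eqref{item:thm_bilimit}$\Leftrightarrow$\eqref{item:thm_biequivalence} is cited from Corollary~\ref{cor:characterization-of-limits}, and \eqref{item:thm_biequivalence}$\Leftrightarrow$\eqref{item:thm_bifinal} from Proposition~\ref{prop:contraction_to_biequivalence} together with its converse Corollary~\ref{prop:biequivalence_to_contraction}. The alternative cyclic chain you sketch is also valid, but it is not needed and is not the decomposition the paper uses.
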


	\begin{proof}
		The equivalence between statements~\eqref{item:thm_bilimit} and~\eqref{item:thm_biequivalence}
		is provided 
		by Corollary~\ref{cor:characterization-of-limits}.
		By virtue of Proposition~\ref{prop:contraction_to_biequivalence} and its converse
		Corollary~\ref{prop:biequivalence_to_contraction}, \(\CFmod \to \mCF\)
		is a biequivalence if and only if \((\ell, \lambda)\) is limiting bifinal in \(\mCF\).
		This proves the equivalence between statements~\eqref{item:thm_biequivalence}
		and~\eqref{item:thm_bifinal}, thereby concluding the proof of the theorem.
	\end{proof}

	\begin{rem}
		Given a 2-functor \(F \colon \J \to \C\), a marking \(E\) on \(\J\)
		and a weight \(W \colon \J \to \Cat\), we observed in Remark~\ref{rem:weighted}
		that the \(W\)-weighted \(E\)-bilimit can be expressed as the \(E_W\)-bilimit
		of the functor \(\mathcal{E}l_F \xrightarrow{p} \J \xrightarrow{F} \C\),
		where \(p \colon \mathcal{E}l_F \to \J\) is the fibration classifying the weight
		2-functor \(W\) and \(E_W\) is a canonical marking on \(\mathcal{E}l_F\)
		detailed in~\cite[Definition~2.1.4]{DescotteDubucSzyldSigmaLimits} by Descotte, Dubuc and Szyld.
		In particular, a \(W\)-weighted \(E\)-bilimit of \(F\) is equivalent
		to a limiting bifinal object \((\ell, \lambda)\) in \(\C^{/Fp}\).
	\end{rem}

	Restricting to the cartesian edges of \(\mCF\) as a fibration over \(\C\) (see~\ref{parag:CFcart})
	and using the analysis on cartesian edges performed in \S\ref{sec:car-edges}, 
	we can deduce the following statement, which already appears
	as Proposition~5.4 in~\cite{ClingmanMoserLimitsDifferent} by clingman and Moser.

	\begin{cor}
	\label{cor:bilimit_cart_biterminal}
		An \(E\)-bilimit \((\ell, \lambda)\) of the \(2\)-functor \(F \colon \J \to \C\)
		is a biterminal object in \(\mCFcart\) .
	\end{cor}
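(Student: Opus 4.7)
The plan is to deduce the corollary as a straightforward consequence of Theorem~\ref{thm:bilimits_are_bifinal} combined with the structural characterization of $M_{\car}$-final objects from Proposition~\ref{prop:E-final-iff-E-contraction}. First I would invoke Theorem~\ref{thm:bilimits_are_bifinal} to pass from the hypothesis that $(\ell, \lambda)$ is an $E$-bilimit of $F$ to the statement that $(\ell,\lambda)$ is a limiting bifinal object in $\mCF$, that is, the center of a limiting ($M_{\car}$-)contraction. Then Proposition~\ref{prop: limiting iff final} gives that the inclusion $\{(\ell,\lambda)\} \hookrightarrow (\mCF, M_{\car})$ is a final $2$-functor, and consequently, by the equivalence $(i)\Leftrightarrow(iii)$ of Proposition~\ref{prop:E-final-iff-E-contraction}, the object $(\ell,\lambda)$ is $M_{\car}$-final in $\mCF$.

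Fix an arbitrary object $(x, \alpha)$ of $\mCFcart$. I would then study the hom-category $\mCFcart\bigl((x,\alpha), (\ell,\lambda)\bigr)$, which by Definition~\ref{def:sub-car} is the full subcategory of $\mCF\bigl((x,\alpha), (\ell,\lambda)\bigr)$ spanned by $p$-cartesian edges. Condition~\eqref{item:final2a} of Definition~\ref{def:E-final} applied to the $M_{\car}$-final object $(\ell,\lambda)$ guarantees the existence of at least one $p$-cartesian edge $(x,\alpha) \to (\ell,\lambda)$, so this full subcategory is non-empty.

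Next I would invoke Remark~\ref{rem:terminal-is-marked}: since $M_{\car}$ is closed under isomorphisms of $1$-cells (as a consequence of Proposition~\ref{prop:cartesian_edges}, which characterizes $p$-cartesian edges by the invertibility of the $2$-cell components~$\sigma_i$, a condition stable under isomorphism in the hom-category), every $p$-cartesian edge $(x,\alpha) \to (\ell,\lambda)$ is terminal in the full hom-category $\mCF\bigl((x,\alpha), (\ell,\lambda)\bigr)$. In particular it is a fortiori terminal in the full subcategory $\mCFcart\bigl((x,\alpha),(\ell,\lambda)\bigr)$. But since \emph{every} object of $\mCFcart\bigl((x,\alpha), (\ell,\lambda)\bigr)$ is by definition a $p$-cartesian edge, each object of this category is terminal, and hence the category is a chaotic groupoid with a single isomorphism class, that is, equivalent to the terminal category $\Di_0$.

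This shows that for every object $(x,\alpha)$ of $\mCFcart$ the hom-category $\mCFcart\bigl((x,\alpha),(\ell,\lambda)\bigr)$ is equivalent to $\Di_0$, which is exactly the definition of $(\ell,\lambda)$ being biterminal in $\mCFcart$. The only delicate point is the justification that $M_{\car}$ is closed under isomorphism of $1$-cells, but this is a direct reading of the characterization provided by Proposition~\ref{prop:cartesian_edges}, so no serious obstacle is expected.
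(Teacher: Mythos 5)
Your proof is correct, but it takes a genuinely different route from the paper's. The paper deduces the corollary from Theorem~\ref{thm:bilimits_are_bifinal} by viewing the induced $2$-functor $\Cl \to \mCF$ as a biequivalence of fibrations over $\C$, arguing that it restricts to a biequivalence $(\Cl)_{\car} \to \mCFcart$ because biequivalences of fibrations preserve and create cartesian edges, and then transporting the biterminality of $(\ell,1_\ell)$ established in the representable case (Proposition~\ref{prop:biterminal_Bx}). You instead stay entirely inside $\mCF$: from limiting bifinality you extract $M_{\car}$-finality, then use Remark~\ref{rem:terminal-is-marked} to conclude that every $p$-cartesian edge into $(\ell,\lambda)$ is terminal in the full hom-category of $\mCF$, hence also in the full subcategory $\mCFcart\bigl((x,\alpha),(\ell,\lambda)\bigr)$, which is therefore a nonempty chaotic category, i.e.\ equivalent to $\Di_0$. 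Your argument is more elementary and self-contained --- it avoids the (unproved in the paper) assertion that biequivalences of fibrations preserve and create cartesian edges, and it makes no use of Proposition~\ref{prop:biterminal_Bx} --- while the paper's argument buys a cleaner conceptual reduction to the representable case. Two small remarks: the detour through statement $(iii)$ of Proposition~\ref{prop:E-final-iff-E-contraction} is unnecessary, since being limiting bifinal is literally statement $(ii)$ with $M = M_{\car}$ (and the contraction itself supplies the required choice of marked edges $f_a$ with $f_c = 1_c$), so $(ii)\Rightarrow(i)$ gives $M_{\car}$-finality directly; and the closure of $M_{\car}$ under isomorphism, while true, is not actually needed for the direction of Remark~\ref{rem:terminal-is-marked} you invoke, which follows from $1_{(\ell,\lambda)}$ being terminal and precomposition with marked edges preserving terminal objects.
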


	\begin{proof}
		It follows from the previous theorem that the 2-functor
		\(\Cl \to \mCF\) induced by \((\ell, \lambda)\) is a biequivalence,
		that we think of as a morphism of fibrations over \(\C\).
		This biequivalence of fibrations induces a 2-functor \((\Cl)_{\text{cart}} \to \mCFcart\),
		since biequivalences preserve and create cartesian edges,
		and it is clear that such a 2-functor is still a biequivalence.
		By virtue of Proposition~\ref{prop:biterminal_Bx} the object
		\((\ell, 1_\ell)\) is a biterminal object in \(\Cl\) and
		therefore its image \((\ell, \lambda)\) is a biterminal object
		in~\(\mCFcart\).
	\end{proof}
\bibliographystyle{amsplain}
\bibliography{biblio}

\providecommand{\bysame}{\leavevmode\hbox to3em{\hrulefill}\thinspace}
\providecommand{\MR}{\relax\ifhmode\unskip\space\fi MR }
% \MRhref is called by the amsart/book/proc definition of \MR.
\providecommand{\MRhref}[2]{%
  \href{http://www.ams.org/mathscinet-getitem?mr=#1}{#2}
}
\providecommand{\href}[2]{#2}
\begin{thebibliography}{10}

\bibitem{GarciaSternThmA}
Fernando Abell{\'a}n~Garc{\'\i}a and Walker~H. Stern, \emph{Theorem {A} for
  marked 2-categories}, J.~Pure Appl.~Algebra \textbf{226} (2022), no.~9.

\bibitem{AlAglSteiner}
Fahd~A. Al-Agl and Richard Steiner, \emph{Nerves of multiple categories},
  {Proc.~London Math.~Soc.~(3)} (1993), no.~1, 92--128.

\bibitem{AraMaltsiCondE}
Dimitri Ara and Georges Maltsiniotis, \emph{The homotopy type of the
  $\infty$-category associated to a simplicial complex},
  \href{http://arxiv.org/abs/1503.02720}{arXiv:1503.02720}, 2015, preprint.

\bibitem{AraMaltsiniotisJoin}
\bysame, \emph{Joint et tranches pour les {$\infty$}-catégories strictes},
  2020, pp.~vi + 213.

\bibitem{Auderset}
Claude Auderset, \emph{Adjonctions et monades au niveau des $2$-cat\'egories},
  Cahiers de Topologie et G\'eom\'etrie Diff\'erentielle Cat\'egoriques
  \textbf{15} (1974), no.~1, 3--20. \MR{352208}

\bibitem{BorceuxKelly}
Francis Borceux and Max Kelly, \emph{A notion of limit for enriched
  categories}, Bulletin of the Australian Mathematical Society \textbf{12}
  (1975), no.~1, 49–72.

\bibitem{BuckleyFibred}
Mitchell Buckley, \emph{{Fibred 2-categories and bicategories}}, J.~Pure
  Appl.~Algebra \textbf{218} (2014), no.~6, 1034--1074.

\bibitem{BurroniOrientals}
Albert Burroni, \emph{Une autre approche des orientaux}, Preprint.

\bibitem{ClingmanMoserLimitsDifferent}
Tslil Clingman and Lyne Moser, \emph{2-limits and 2-terminal objects are too
  different}, \href{http://arxiv.org/abs/2004.01313}{arXiv:2004.01313}, 2020,
  preprint.

\bibitem{ClingmanMoserBiinitial}
\bysame, \emph{Bi-initial objects and bi-representations are not so different},
  \href{http://arxiv.org/abs/2009.05545}{arXiv:2009.05545}, 2020, preprint.

\bibitem{CransThesis}
Sjoerd~E. Crans, \emph{On combinatorial models for higher dimensional
  homotopies}, Ph.D. thesis, Universiteit Utrecht, 1995.

\bibitem{DescotteDubuc2Pro}
María~E. Descotte and Eduardo~J. Dubuc, \emph{A theory of 2-pro-objects}, Cah.
  Topol. G\'{e}om. Diff\'{e}r. Cat\'{e}g. \textbf{55} (2014), no.~1, 2--36.

\bibitem{DescotteDubucSzyldSigmaLimits}
María~E. Descotte, Eduardo~J. Dubuc, and Martín Szyld, \emph{Sigma limits in
  2-categories and flat pseudofunctors}, Adv.~Math. \textbf{333} (2018),
  266--313.

\bibitem{GagnaHarpazLanariLaxLimits}
Andrea Gagna, Yonatan Harpaz, and Edoardo Lanari, \emph{{F}ibrations and lax
  limits of {$(\infty,2)$}-categories},
  \href{http://arxiv.org/abs/2012.04537}{2012.04537}, 2020, preprint.

\bibitem{GrayFormal}
John~W. Gray, \emph{Formal category theory: adjointness for 2-categories},
  Lecture Notes in Mathematics, vol. 391, Springer-Verlag, 1974.

\bibitem{HermidaFib}
Claudio Hermida, \emph{Some properties of {F}ib as a fibred 2-category},
  J.~Pure Appl.~Algebra \textbf{134} (1999), no.~1, 83--109.

\bibitem{JayLocalAdjunction}
Colin~B. Jay, \emph{Local adjunctions}, Journal of Pure and Applied Algebra
  \textbf{53} (1988), no.~3, 227--238.

\bibitem{KellyBasic}
Max Kelly, \emph{{B}asic {C}oncepts of {E}nriched {C}ategory {T}heory}, Lecture
  Notes in Mathematics, no.~64, Cambridge University Press, 1982.

\bibitem{KellyElementary}
\bysame, \emph{{E}lementary observations on 2-categorical limits}, Bulletin of
  the Australian Mathematical Society \textbf{39} (1989), no.~2, 301–317.

\bibitem{LackModel2Cat}
Stephen Lack, \emph{A {Q}uillen model structure for {$2$}-categories},
  {$K$}-{T}heory \textbf{26} (2002), no.~2, 171--205.

\bibitem{LackModelBicat}
\bysame, \emph{A {Q}uillen model structure for bicategories}, $K$-Theory
  \textbf{33} (2004), no.~3, 185--197.

\bibitem{LackCompanion}
\bysame, \emph{{A} 2-{C}ategories {C}ompanion}, {T}owards {H}igher
  {C}ategories, {IMA} {V}ol.~{M}ath.~{A}ppl., Springer, 2010, pp.~105--191.

\bibitem{PortaSalaCatHall}
Mauro Porta and Francesco Sala, \emph{Two-dimensional categorified {H}all
  algebras}, J.~Eur.~Math. (2022), to appear.

\bibitem{StreetLimits}
Ross Street, \emph{Limits indexed by category-valued 2-functors}, Journal of
  Pure and Applied Algebra \textbf{8} (1976), no.~2, 149--181.

\end{thebibliography}
\end{document}